\theoremstyle{plain}
\newtheorem{thm}{Theorem}[section]
\newtheorem{lem}[thm]{Lemma}
\newtheorem{prop}[thm]{Proposition}
\newtheorem{cor}[thm]{Corollary}
\newtheorem{rem}[thm]{Remark}
\newtheorem*{thmnn}{Theorem}
\theoremstyle{definition}
\theoremstyle{remark}
\def\l{\lambda}
\def\L{\Lambda}
\def\R{\mathbb{R}}
\def\N{\mathbb{N}}
\def\S{\Sigma}
\def\e{\epsilon}
\def\g{\gamma}
\def\d{\delta}
\def\D{\Delta}
\def\z{\zeta}
\def\G{\Gamma}
\def\a{\alpha}
\newcommand{\dist}{\text{dist}}
\newcommand{\ddist}{\text{dist}^*}
\DeclareMathOperator{\diam}{diam}
\DeclareMathAlphabet{\mathscr}{OT1}{pzc}{m}{it}
\begin{document}

\title{\centerline{Sets of Constant Distance from a Jordan Curve }}
\date{\today}
\author{Vyron Vellis}

\author{Jang-Mei Wu}

\address{Department of Mathematics, University of Illinois,  1409 West Green Street, Urbana, IL 61820, USA}

\email{vellis1@illinois.edu}

\email{wu@math.uiuc.edu}

\thanks{Research supported in part by the NSF grants DMS-0653088 and DMS-1001669.}

\thanks{Figures are illustrated by Julie H.-G. Kaufman.}

\subjclass[2010]{Primary 30C62; Secondary 57N40}
\keywords{chordal property, Jordan curves, distance function, level sets, quasicircles, chord-arc curves}

\begin{abstract}

We study the $\e$-level sets of the signed distance function to a planar Jordan curve $\G$, and ask what properties of $\G$ ensure that  the $\e$-level sets
are  Jordan curves, or uniform quasicircles, or uniform chord-arc curves for \emph{all} sufficiently small $\e$.
Sufficient conditions are given in term of a scaled invariant parameter for measuring the local deviation of subarcs from their chords. The chordal conditions given are sharp.

\end{abstract}

\maketitle

\section{Introduction}\label{intro}

Let $A$ be a compact subset of $\R^2$. For each $\e>0$,  define the $\e$\emph{-boundary} of $A$ to be the set
\[
\partial_{\e}(A) = \{x \in \R^2 \colon \dist(x,A) = \e \}.
\]
Brown showed  in \cite{Brown} that for all but countably many $\e$, every component of $\partial_{\e}(A) $ is a point, a simple arc, or a simple closed curve. In \cite{Ferry}, Ferry showed, among other results, that $ \partial_{\e}(A)$ is a $1$-manifold for almost all $\e$. Fu  \cite{Fu} generalized Ferry's results, and proved that for all $\e$ outside a compact set of zero $1/2$-dimensional Hausdorff measure, $ \partial_{\e}(A)$ is a Lipschitz $1$-manifold. Papers \cite{Ferry} and \cite{Fu} include theorems in higher dimensional Euclidean spaces; the work for dimensions $n\ge 3$ is more demanding.

Let $\G$ be a Jordan curve in $\R^2$ and  $\Omega$ be the bounded component of  $\R^2\setminus \G$. We define the \emph{signed distance function}
\[
\ddist(x,\G)=
\begin{cases}
\dist(x,\G),  & x\in \Omega,\\
-\dist(x,\G),  & x\in \R^2\setminus \Omega;
\end{cases}
\]
and define for any $\e\in (-\infty, \infty)$, the $\e$\emph{-level set} of the signed distance function to be
\[
\g_{\e} = \{ x\in \R^2 \colon \ddist(x,\G) = \e \}.
\]

What properties of $\G$ ensure that  the $\e$-level sets
are  Jordan curves, or uniform quasicircles, or uniform chord-arc curves for \emph{all} $\e$ sufficiently close to $0$?

\begin{figure}[ht]
\includegraphics[scale=0.9]{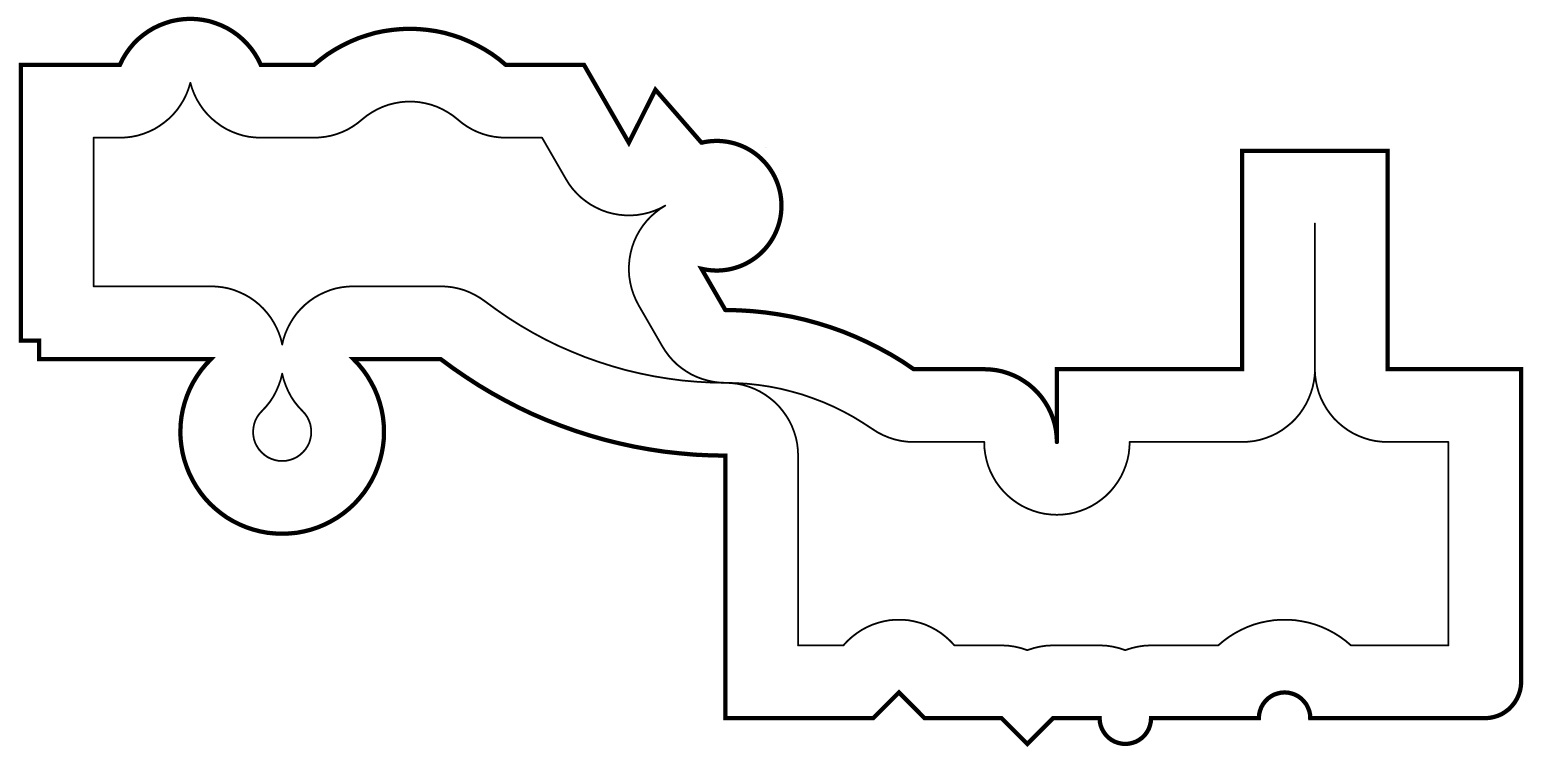}
\caption{A level set of a Jordan curve}
\label{fig:figure2}
\end{figure}

We say that a Jordan curve $\G$ in $\R^2$ has the \emph{level Jordan curve  property} (or LJC \emph{property}), if there exists $\e_0>0$ such that the level set $\g_{\e}$ is a Jordan curve for every $0< |\e| \leq \e_0$. A Jordan curve $\G$ is said to have
the \emph{level quasicircle property} (or LQC \emph{property}), if there exist $\e_0>0$ and $K\geq 1$ such that the level set $\g_{\e}$ is a $K$-quasicircle  for every $0< |\e| \leq \e_0$. Finally, a Jordan curve $\G$ is said to have the \emph{level chord-arc property} (or LCA \emph{property}), if there exist $\e_0>0$ and $C\geq 1$ such that $\g_{\e}$ is a $C$-chord-arc curve  for every $0< |\e| \leq \e_0$.
It is not hard to see that if $\G$ has the LQC property then it is a quasicircle and if $\G$ satisfies the LCA property then it is a chord-arc curve; see Theorem \ref{LCAmain}.

Given two points $x,y $ on a Jordan curve $\g$, we take $\g(x,y)$ to be the subarc of $\g$ connecting $x$ and $y$ that has a smaller diameter,
or, to be either subarc when both  have the same diameter.

Modeled on the \emph{linear approximation property} of Mattila and Vuorinen \cite{MaVu}, we define, for a Jordan curve $\G$ in the plane, a scaled invariant parameter to measure the local deviation of the subarcs from their chords.
For  points $x, y$ on a Jordan curve $\G$ and the infinite line $l_{x,y}$ through $x$ and $y$, we set
\[ \z_{\G} (x,y) = \frac{1}{|x-y|}\sup_{z \in \G(x,y)} \dist (z,l_{x,y}). \]
A Jordan curve $\G$ is said to have the $(\z, r_0)$-\emph{chordal property} for a certain $\z >0$ and $r_0>0$, if
\[
\sup_{x,y \in\G, |x-y|\leq r_0} \z_{\G}(x,y)  \leq \z.
\]
We set
\[ \z_{\G} = \lim_{r_0\to 0}\sup_{x,y \in\G,|x-y|\leq r_0} \z_{\G}(x,y).\]
This notion of chord-likeness provides us a gauge for studying the geometry of level sets.

\begin{thm}\label{betathmLC}
Let $\G$ be a Jordan curve in $\R^2$. If $\G$ has the $(1/2, r_0)$-\emph{chordal property} for some  $r_0>0$, then $\G$ has the level Jordan curve property.
\end{thm}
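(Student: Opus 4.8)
Fix a small constant $\e_0\in(0,r_0)$, depending only on $r_0$ and on the geometry of $\G$; for each $\e$ with $0<|\e|\le\e_0$ the goal is to show that $\g_\e$ is a nonempty, compact, connected topological $1$-manifold, which (the circle being the only such space) makes it a Jordan curve. The case $\e<0$ runs parallel to the case $\e>0$ once one replaces $\Omega$ by the exterior domain $\R^2\setminus\overline\Omega$ and the erosion $\Omega_\e:=\{x\in\Omega:\dist(x,\G)>\e\}$ by the dilation $\overline\Omega\oplus\overline B(0,|\e|)$, so I describe only $\e>0$, where $\g_\e\subset\Omega$. Since every point of $\Omega$ with $\dist(\cdot,\G)\le\e$ is joined to its foot on $\G$ by a segment along which $\dist(\cdot,\G)$ is non-increasing, the set $\R^2\setminus\Omega_\e=(\R^2\setminus\Omega)\cup\{x\in\Omega:\dist(x,\G)\le\e\}$ is connected, so $\Omega_\e$ is simply connected. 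Hence, once we know that $\g_\e=\partial\Omega_\e$ is a $1$-manifold and that $\Omega_\e$ is connected (and nonempty, which holds as soon as $\e_0$ is below the inradius of $\Omega$), $\overline{\Omega_\e}$ is a simply connected compact surface with boundary, i.e. a closed disk, and $\g_\e$ is its boundary circle.

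\emph{Local flatness of $\G$.} The first step is to distil from the $(1/2,r_0)$-chordal property a quantitative local-structure statement: there are $r_1\in(0,r_0]$ and absolute constants $c,C>0$ such that for every $p\in\G$ and every $0<s\le r_1$, the component $\sigma_p(s)$ of $\G\cap\overline B(p,s)$ containing $p$ has $\diam\sigma_p(s)\le Cs$, the rest of $\G$ lies at distance at least $cs$ from $p$, and $\sigma_p(s)$ is contained in a strip of width at most $\tfrac12\diam\sigma_p(s)$ about a line. The mechanism: if $\G$ were to return near itself or form a long hairpin at some scale $s<r_0$, there would be $q_1,q_2\in\G$ with $|q_1-q_2|\le s$ whose smaller-diameter subarc $\G(q_1,q_2)$ has diameter much larger than $s$; being the smaller subarc, it must lie within $\tfrac12|q_1-q_2|$ of $l_{q_1,q_2}$, whereas a long return forces a point of it far from that line --- a contradiction. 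Iterating over dyadic scales yields the stated bounds.

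\emph{$\g_\e$ is a $1$-manifold, and $\Omega_\e$ is connected.} Take $\e_0\ll r_1$. Fix $x\in\g_\e$ with a nearest point $p\in\G$, so $|x-p|=\e$. By the previous step, on a ball about $x$ of radius comparable to $r_1$ the curve $\G$ reduces to the single, nearly flat subarc $\sigma:=\sigma_p(r_1)$, contained in a strip of width $\le\tfrac12 r_1$; there $\dist(\cdot,\G)=\dist(\cdot,\sigma)$. One then checks that near $x$ the set $\{y:\dist(y,\sigma)=\e\}$ is a simple arc and that $x$ is not a local maximum of $\dist(\cdot,\sigma)$: a branch point, a self-crossing, or an isolated point of $\g_\e$ would force two or three points of $\sigma$ to sit at distance $\e$ from a common point and be spread around it, and, by flatness, these points lie in $B(x,\e)$ and hence in a strip of width $\le\e$ --- the place where the constant $\tfrac12$ enters --- so they are essentially collinear and admit no such common point; the degenerate equality configurations are ruled out directly. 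Hence $\g_\e$ is a compact $1$-manifold and $\g_\e=\partial\Omega_\e$. Finally, if $\Omega_\e$ were disconnected, a component $V_1$ would be cut off by a ``neck'' of $\Omega$ of width at most $2\e$, giving $q_1,q_2\in\G$ with $|q_1-q_2|\le 2\e<r_0$ whose smaller subarc $\G(q_1,q_2)$, together with the chord $[q_1,q_2]$, bounds a subdomain $\Omega'\supset V_1$; as $\G(q_1,q_2)$ is the smaller subarc, the $(1/2,r_0)$-chordal property confines it, hence $\Omega'$, to the strip $\{z:\dist(z,l_{q_1,q_2})\le\tfrac12|q_1-q_2|\le\e\}$. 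But $V_1$ contains a point $v$ with $\dist(v,\G)>\e$, and tracking the positions of $v$ and of $q_1,q_2$ relative to $l_{q_1,q_2}$ (using $\dist(v,q_i)\ge\dist(v,\G)>\e$ and that $v$ lies deep inside $\Omega'$) yields a contradiction with $\zeta_\G(q_1,q_2)\le\tfrac12$, sharp exactly at $\tfrac12$. Thus $\Omega_\e$ is connected, and by the first paragraph $\g_\e$ is a Jordan curve.

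The main obstacle is this last step, together with the quantitative half of the local-flatness step: one must convert the scale-invariant bound $\zeta_\G\le\tfrac12$ into the precise geometric statement that a subdomain cut off by a short chord $[q_1,q_2]$ cannot reach into the interior past depth $\tfrac12|q_1-q_2|$, and one must handle the equality case $\zeta_\G=\tfrac12$ (and the ``$\approx$'' in ``neck of width $2\e$'') with care. It is precisely the value $\tfrac12$ that makes the strip half-width $\tfrac12|q_1-q_2|$ match the erosion scale $\e$ when $|q_1-q_2|\approx 2\e$, which is also why the chordal constant in the theorem cannot be enlarged.
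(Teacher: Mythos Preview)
Your topological framework is essentially the paper's: show that $\Omega_\e:=\Delta_\e$ is nonempty and connected, that $\partial\Omega_\e$ is a Jordan curve, and that $\gamma_\e=\partial\Omega_\e$. The genuine gap is in how you extract a contradiction with the $(1/2,r_0)$-chordal hypothesis in the last two of these steps.

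In both the connectedness step and the step $\gamma_\e=\partial\Omega_\e$, what you ultimately need is a pair $q_1,q_2\in\G$ with $|q_1-q_2|<r_0$ and $\zeta_\G(q_1,q_2)>\tfrac12$ \emph{strictly}. Your neck argument does not supply this. You have not explained how to locate $q_1,q_2$ from the assumed disconnection; you have not argued that the subarc enclosing a component $V_1$ is the \emph{smaller}-diameter subarc $\G(q_1,q_2)$ (the only one the chordal bound controls); and your strip estimate yields at best $\dist(v,\G)\le\e$, not the strict inequality you need to contradict $\dist(v,\G)>\e$ in the borderline case $|q_1-q_2|=2\e$. The paper's device is a limiting argument (Proposition~\ref{gammadelta}, resting on Lemmas~\ref{4points} and~\ref{K-D}): for suitably chosen $x,y$ set $d_0=\sup\{\delta>0:\ x,y\text{ lie in one component of }\Delta_\delta\}$, so that $0<d_0<\e$; at the critical level $d_0$ one locates a point $x_0\in\g_{d_0}$ whose nearest-point set on $\G$ is \emph{exactly} a pair of antipodal points $\{x_1,x_2\}$ on $S(x_0,d_0)$ with $x_0$ their midpoint. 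The perpendicular bisector of $[x_1,x_2]$ then meets the smaller subarc $\G(x_1,x_2)$ (as it meets both subarcs) at some $z\neq x_1,x_2$, and $|x_0-z|>d_0$ is \emph{strict} precisely because $\G^{\{x_0\}}=\{x_1,x_2\}$, giving $\zeta_\G(x_1,x_2)>\tfrac12$. The companion obstruction for $\gamma_\e\supsetneq\partial\Omega_\e$ is handled the same way via Proposition~\ref{boundary}.

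Your local ``branch point / isolated point'' sketch for the $1$-manifold claim aims at the same conclusion, but the sentence ``these points lie in $B(x,\e)$'' is not right (the nearest points lie on $S(x,\e)$), and the assertion that a thin strip prevents $\G^{\{x\}}$ from surrounding $x$ is exactly the angular count carried out in Lemma~\ref{4points}; that lemma, together with Lemma~\ref{K-D}, is the missing technical ingredient in your plan. Once those are in place, the rest of your outline coincides with the paper's proof (the paper obtains the Jordan property of $\partial\Delta_\e$ from Lennes' theorem via Lemma~\ref{appr}, which is equivalent to your ``compact connected $1$-manifold'' route).
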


\begin{thm}\label{betathmLQC}
Let $\G$ be a Jordan curve in $\R^2$. If $\z_\G <1/2$, then  $\G$ has the level quasicircle property. In particular,
if $\G$ has the $(\z, r_0)$-\emph{chordal property} for some $0<\z<1/2$ and  $r_0>0$, then there exist $\e_0>0$ and $K\ge 1$ depending on $\z$, $r_0$ and the diameter of $\G$ so that the level sets $\g_\e$ are $K$-quasicircles for all $0<|\e|<\e_0$.
\end{thm}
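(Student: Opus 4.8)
The idea is to reduce the quasicircle property to two local estimates: (i) a bounded-turning / three-point condition for $\gamma_\e$, and (ii) control that $\gamma_\e$ is a genuine Jordan curve so that the three-point condition actually characterizes a quasicircle. Since $\zeta_\Gamma < 1/2$ means $\Gamma$ has the $(\zeta, r_0)$-chordal property for some $\zeta < 1/2$ and some $r_0 > 0$, Theorem \ref{betathmLC} already gives the LJC property, so each $\gamma_\e$ is a Jordan curve for $0 < |\e| \le \e_0$. It remains to prove the Ahlfors three-point condition with a uniform constant.

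The plan is to work at a fixed small scale. Fix $\e$ with $0 < |\e| < \e_0$ and pick $p, q \in \gamma_\e$. Let $\gamma_\e(p,q)$ be the smaller-diameter subarc; I want to show $\diam \gamma_\e(p,q) \le C|p-q|$. First I would locate, for each point $z \in \gamma_\e$, a nearest point $\pi(z) \in \Gamma$ with $|z - \pi(z)| = |\e|$, and use the chordal property to show the "projection" $\pi$ behaves like a quasisymmetric-type map between $\Gamma$ and $\gamma_\e$ at scales comparable to $|\e|$: if $\zeta < 1/2$, then near any point of $\Gamma$ the curve lies in a thin cone around its chord, which forces the normal rays from nearby points of $\Gamma$ to be roughly parallel and non-crossing at distances $\le$ (const)$\cdot|\e|$; hence $\pi$ is well-defined, continuous, and distance-comparable in both directions on the relevant scale. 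The key quantitative input is that the chordal constant $\zeta$ being strictly below $1/2$ leaves a definite gap $1/2 - \zeta > 0$, which is exactly what prevents two normal segments of length $|\e|$ from opposite-ish directions from meeting — this is the same threshold that makes $\gamma_\e$ simple in Theorem \ref{betathmLC}, now used quantitatively.

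Next, given $p,q \in \gamma_\e$, I consider two regimes. If $|p - q|$ is small compared to $|\e|$ (say $\le c|\e|$ for a suitable small $c$ depending on $\zeta$), then $p,q$ lie in a single "good" neighborhood where $\gamma_\e$ is a graph-like curve over its chord with bounded slope (again because of the cone condition inherited from $\Gamma$), so the three-point condition holds with a constant depending only on $\zeta$. If $|p-q| \ge c|\e|$, then the points $\pi(p), \pi(q) \in \Gamma$ satisfy $|\pi(p) - \pi(q)| \le |p-q| + 2|\e| \le C|p-q|$ and are at distance $\gtrsim |\e|$ apart, so (shrinking $\e_0$ if needed so that $C|p-q|$-scales are $\le r_0$ when $|p-q|$ is itself controlled, and handling the genuinely large-scale case using the fact that $\Gamma$ is a quasicircle — which follows since $\zeta_\Gamma < 1/2$ implies $\Gamma$ is bounded-turning) one gets $\diam \Gamma(\pi(p),\pi(q)) \le C|\pi(p)-\pi(q)| \le C|p-q|$; finally transport the diameter bound back: $\diam \gamma_\e(p,q) \le \diam \Gamma(\pi(p),\pi(q)) + 2|\e| \le C|p-q|$, using that $|\e| \lesssim |p-q|$ in this regime. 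Assembling both regimes yields the uniform three-point constant $K' = K'(\zeta)$, and combined with simplicity from Theorem \ref{betathmLC} this gives that $\gamma_\e$ is a $K$-quasicircle with $K = K(\zeta)$; tracking how small $\e_0$ must be shows it depends on $\zeta$, $r_0$, and $\diam \Gamma$.

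The main obstacle I anticipate is the careful verification that the nearest-point projection $\pi$ is single-valued and bi-Lipschitz-like on the correct range of scales: one must rule out that a point $z$ with $\dist(z,\Gamma) = |\e|$ has two far-apart nearest points on $\Gamma$, and more delicately that the normal segments emanating from an arc $\Gamma(a,b)$ with $|a-b| \approx |\e|/\zeta$ foliate a collar neighborhood without crossing before reaching distance $|\e|$. This is where the strict inequality $\zeta < 1/2$ does all the work, and making the cone/foliation argument quantitative — with explicit dependence of the collar width and the comparability constants on $1/2 - \zeta$ — is the technical heart of the proof; the rest is bookkeeping across the two scale regimes and an appeal to the already-established facts that $\Gamma$ is a quasicircle and each $\gamma_\e$ is a Jordan curve.
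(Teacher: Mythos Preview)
Your overall plan matches the paper's: invoke Theorem~\ref{betathmLC} for the LJC property, then verify the Ahlfors $2$-point condition on $\g_\e$ with a uniform constant, splitting by the size of $|p-q|$ versus $|\e|$. The gap is in the mechanism you propose for each regime. The nearest-point map $\pi$ is \emph{not} single-valued, continuous, or bi-Lipschitz-like even under $\z_\G<1/2$: points $x,y\in\g_\e$ with $|x-y|$ arbitrarily small can have nearest points $x',y'\in\G$ with $|x'-y'|$ comparable to $|\e|$ (this is exactly the paper's hardest case), and $\G$ need not be differentiable anywhere, so your ``normal-ray foliation of a collar'' has no meaning for such curves. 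Your large-scale inequality $\diam\g_\e(p,q)\le\diam\G(\pi(p),\pi(q))+2|\e|$ is also unjustified as stated: nothing forces the nearest $\G$-point of each $z\in\g_\e(p,q)$ to lie on the arc $\G(\pi(p),\pi(q))$ rather than on the complementary one.

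The paper replaces the projection idea with two trapping constructions. In every case it uses Lemma~\ref{quasiconvex} (with Remark~\ref{remappr}) to produce a curve $\tau_{x,y}$ with $\tau_{x,y}\setminus\{x,y\}\subset\D_\e$ and $\diam\tau_{x,y}\lesssim|x-y|$; together with $[x,x']\cup\G(x',y')\cup[y,y']$ this encloses one component of $\g_\e\setminus\{x,y\}$ in a Jordan domain $D$, disposing immediately of the regimes $|x-y|\gtrsim|\e|$ or $|x'-y'|\gtrsim|\e|$. For the remaining case $|x-y|\ll|\e|$ with $0<|x'-y'|<4(1-\z)|\e|$, the paper builds an explicit ``stadium'' $U(x',y')=B(x',|\e|)\cup B(y',|\e|)\cup R(x',y')$, where $R$ is a rectangle of height $2(|\e|-\z|x'-y'|)$, checks directly from the chordal hypothesis that $U(x',y')\cap\overline{\D_\e}=\emptyset$, and uses the short arc $T_{x,y}$ of $\partial U(x',y')$ from $x$ to $y$ as an \emph{outer} trapping curve; then $\g_\e(x,y)$ is confined between $\tau_{x,y}$ and $T_{x,y}$. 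The strict inequality $\z<1/2$ enters exactly through $|\e|-\z|x'-y'|>|\e|(1-2\z)^2>0$ (inequality~\eqref{zeta-half}), which makes the boundaries $\partial U(x',y')$ uniform quasicircles. This explicit geometric construction is the quantitative content your small-scale sketch (``graph-like with bounded slope'') does not supply and cannot supply via normal rays on a possibly nowhere-differentiable $\G$.
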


Lemmas \ref{4points} and \ref{K-D} lead naturally to the $(1/2,r_0)$-chordal condition for LJC in Theorem \ref{betathmLC};
they show that the behavior of the level set near branch points in Figure \ref{fig:figure2} is, in some sense, typical.

Condition $\z_\G<1/2$ in Theorem \ref{betathmLQC} is used to prove the Ahlfors $2$-point condition for level Jordan curves, thereby establishing the LQC property.

The chordal conditions in both theorems are sharp. The sharpness in Theorem  \ref{betathmLC} is given in Remark \ref{sharpLJC}, and the sharpness in Theorem \ref{betathmLQC} will be illustrated in Remark \ref{sharpLQC}.

\medskip

Moreover, using a lemma of Brown \cite[Lemma 1]{Brown}, we are able to show the following.

\begin{thm}\label{LCAmain}
A Jordan curve $\G$ in the plane satisfies the level chord-arc property if and only if it is a chord-arc curve and has the level quasicircle property.
\end{thm}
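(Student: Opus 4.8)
The plan is to prove the non-trivial direction: if $\G$ is a chord-arc curve with the LQC property, then it has the LCA property (the converse is the easy implication mentioned in the introduction, since a chord-arc curve is both a quasicircle and, by a direct argument, forces a chord-arc constant on all $\g_\e$ only under the extra LQC hypothesis). So assume $\G$ is $C_0$-chord-arc and, by the LQC property, there are $\e_0>0$ and $K\ge 1$ so that each $\g_\e$ with $0<|\e|\le\e_0$ is a $K$-quasicircle; in particular each $\g_\e$ is already a Jordan curve. Since a $K$-quasicircle is a chord-arc curve precisely when it is Ahlfors-regular (equivalently, when the length of each subarc $\g_\e(x,y)$ is comparable to $|x-y|$), it suffices to produce a constant $C\ge 1$, independent of $\e$, with $\ell(\g_\e(x,y))\le C|x-y|$ for all $x,y\in\g_\e$; the reverse inequality is automatic. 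Because $\g_\e$ is a $K$-quasicircle, it is enough to bound the \emph{total} length: if $\ell(\g_\e)\le C\diam(\g_\e)$ with $C$ independent of $\e$, then the $K$-quasicircle bound upgrades this to the local comparison $\ell(\g_\e(x,y))\le C'|x-y|$ by a standard Ahlfors-regularity-from-quasisymmetry argument applied on all scales. Thus the crux is a uniform (in $\e$) upper bound on $\mathcal H^1(\g_\e)$.

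For that bound I would invoke Brown's lemma \cite[Lemma 1]{Brown}, which (in the planar setting) gives control on how the level set $\g_\e$ projects onto or covers $\G$ — roughly, that the nearest-point projection $\pi_\e\colon\g_\e\to\G$ is surjective and that $\g_\e$ does not fold back too wildly over any piece of $\G$; more precisely it bounds the multiplicity of $\pi_\e$ and the oscillation of $\ddist$ along $\G$-directions. Combining this with the chord-arc (hence rectifiable, Ahlfors-regular) structure of $\G$, one covers $\G$ by finitely many arcs of length comparable to $\e$, pulls these back under $\pi_\e^{-1}$ to a cover of $\g_\e$ by pieces each of controlled diameter $\lesssim\e$ and bounded overlap, and sums lengths. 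Since $\G$ is $C_0$-chord-arc, $\mathcal H^1$ of the $\e$-neighborhood annulus of $\G$ in each such piece is $\lesssim\e$, and the number of pieces is $\lesssim \ell(\G)/\e = \diam(\G)/\e$ up to the chord-arc constant; multiplying gives $\mathcal H^1(\g_\e)\lesssim \ell(\G)\asymp\diam(\g_\e)$, with constants depending only on $C_0$ and $K$.

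I expect the main obstacle to be making the covering argument quantitative with constants genuinely independent of $\e$: one must ensure that the pull-back pieces $\pi_\e^{-1}(\text{arc})$ are connected subarcs of $\g_\e$ (or a bounded number of them), that their diameters are $\lesssim\e$ rather than merely finite, and that the overlap multiplicity is bounded — all of which is exactly what Brown's lemma is meant to supply, but it requires care to extract the planar quantitative statement from it and to interface it with the definition of chord-arc curve. A secondary point is the passage from the global length bound $\ell(\g_\e)\lesssim\diam(\g_\e)$ to the local chord-arc inequality; here one uses that a $K$-quasicircle satisfying a global Ahlfors upper-regularity bound at the top scale satisfies it at all scales, because subarcs $\g_\e(x,y)$ are themselves images under a controlled quasisymmetry of the whole curve — this is standard but should be cited or sketched rather than assumed silently. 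Once these two pieces are in place, choosing $C$ to be the resulting constant completes the proof, and the ``only if'' direction follows by noting a chord-arc level set at a single small $\e$ already forces $\G$ itself (as the $\e\to0$ limit) to be chord-arc, while the LQC property is immediate since chord-arc curves are quasicircles and the level sets inherit uniform quasicircle constants from Theorem \ref{betathmLQC} applied through the chord-arc (hence $\z_\G=0<1/2$) hypothesis.
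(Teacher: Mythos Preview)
Your sufficiency argument has a genuine gap at the global-to-local step. You propose to establish only the total-length bound $\ell(\g_\e)\lesssim\diam(\g_\e)$ and then pass to $\ell(\g_\e(x,y))\lesssim|x-y|$ via a ``standard Ahlfors-regularity-from-quasisymmetry argument,'' asserting that subarcs of a $K$-quasicircle are images of the whole curve under controlled quasisymmetries. No such principle exists, and the implication is false: take the unit circle and replace an arc of chord $3^{-n}$ by the $n$-th von~Koch approximant scaled by $3^{-n}$. These curves are uniform $K$-quasicircles (the approximants are uniform quasiarcs by self-similarity), their total length is $2\pi+O((4/9)^n)$ hence uniformly bounded, yet on the inserted piece the length-to-chord ratio is $(4/3)^n\to\infty$. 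Quasisymmetry alone gives no length control; you must produce the local bound $\ell(\l)\lesssim\diam\l$ directly for every subarc $\l\subset\g_\e$.

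The paper does exactly this, and two of its ingredients are missing from your sketch. First, Brown's Lemma~1 has nothing to do with the nearest-point projection $\pi_\e$ or its multiplicity; it says that when $\e>\diam A$ the set $\partial_\e(A)$ bounds a starlike domain, from which the paper extracts (via an interior-cone property, Lemma~\ref{Brown-chord arc}) that $\partial_\e(A)$ is $c_0$-chord-arc with a \emph{universal} $c_0$ once $\e>3\diam A$. Second, to run the covering argument locally one fixes a subarc $\l\subset\g_\e$, takes its shadow $\G^\l=\{y\in\G:\dist(y,\l)=\e\}$, subdivides a containing arc $\L\subset\G$ into pieces $\L_n$ of diameter $\sim\e/10$, and applies Lemma~\ref{Brown-chord arc} to each $\partial_\e(\L_n)$. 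For this to bound $\ell(\l)$ one needs that each $\l_n=\g_\e^{\L_n}\cap\l$ is itself a subarc of $\g_\e$; this is Lemma~\ref{orientation} and is not automatic. The chord-arc property of $\G$ then gives $N\simeq\e^{-1}\diam\G^\l$, and comparing $\diam\G^\l$ with $\diam\l$ (in three cases) yields $\ell(\l)\lesssim\diam\l$ directly, with no localization step needed. A minor point on the other direction: chord-arc does \emph{not} imply $\z_\G=0$ (a square has $\z_\G=1/2$), so your appeal to Theorem~\ref{betathmLQC} fails; fortunately LQC follows trivially from LCA since $C$-chord-arc curves are $K(C)$-quasicircles, and the paper gets that $\G$ itself is chord-arc by an Arzel\`a--Ascoli limit of the bi-Lipschitz parametrizations of the $\g_{1/n}$.
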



\medskip

The paper is organized as follows. We discuss the chordal property in Section \ref{flatness}, and study geometric properties of level sets in Section \ref{geometry}.
In Section \ref{mainresults}, we prove Theorems \ref{betathmLC} and  \ref{betathmLQC}
and give examples to show the sharpness of these theorems. We give the proof of  Theorem \ref{LCAmain} in Section \ref{LCAresults}. Finally in Section \ref{snowflakes}, we provide an additional example based on Rohde's $p$-snowflakes.

\section{Preliminaries}\label{prelim}

A homeomorphism $f\colon D\to D'$ between two domains in $ \mathbb{R}^2$  is called $K$-\emph{quasiconformal}  if it is orientation preserving, belongs to $ W_{loc}^{1,2}(D)$, and satisfies the distortion inequality
\[
|Df(x)|^2 \le K J_f(x) \quad \text{a. e.} \,\,\, x \in D,
\]
where $Df$ is the formal differential matrix and $J_f$ is the Jacobian. The smallest $K=K(f)$ for which the above inequality holds almost everywhere is called the distortion of the mapping $f$.

A Jordan curve $\g$ in $\mathbb{R}^2$ is called a $K$-\emph{quasicircle} if it is the image of the unit circle $\mathbb S^1$ under a $K$-quasiconformal homeomorphism of $\mathbb{R}^2$.
A geometric characterization due to Ahlfors \cite{Ah} states that a Jordan curve $\g$ is a $K$-quasicircle if and only if it satisfies the \emph{2-point condition}:
\begin{equation}\label{3pts}
\text{there exists } C>1 \text{ such that for all }  x,y \in \g, \, \,\diam{\g(x,y)} \leq C|x-y|,
\end{equation}
where the distortion $K$ and the $2$-point constant $C$ are quantitatively related.

A long list of remarkably diverse characterizations of quasicircles has been found. See
the monograph of Gehring \cite{Gehring-characterization} for informative discussion.

A homeomorphism $f\colon D\to D'$ between two domains in $\mathbb{R}^2$ is said to be
$L$-\emph{bi-Lipschitz}, if there exists $L\geq 1$ such that for any $x,y \in D$
\[
\frac{1}{L}|x-y| \leq |f(x)-f(y))| \leq L |x-y|.
 \]
A rectifiable Jordan curve $\g$ in $\mathbb{R}^2$ is called a $C$-\emph{chord-arc curve} if there exists $C\geq 1$ such that  for any $x, y \in \g$, the length of the shorter component, $\g'(x,y)$,  of  $\g \setminus \{x,y\}$ satisfies
\[
\ell(\g'(x,y)) \leq C |x-y|.
\]
Here, and in the future, $\ell (\g)$ denotes the length of a curve $\g$. Every $C$-chord-arc curve is, in fact, the image of $\mathbb S^1$ under an $L$-bi-Lipschitz homeomorphism of $\R^2$, where the constants $C$ and $L$ are quantitatively related; see\cite[p. 23]{Tukia-ext} and \cite[Proposition 1.13]{JeK}.

In the following, we denote by $B(x,r)$  the disk $\{ y \in \mathbb{R}^2 \colon |x-y| < r\}$  and by $S(x,r)$ its boundary $\partial B(x,r)$. In particular, $\mathbb{B}^2 = B(0,1)$ denotes the unit disk and $\mathbb S^{1} = \partial \mathbb{B}^2$ denotes the unit circle.
For $x,y \in \R^2$, denote by $[x,y]$  the line segment having end points $x$ and $y$, by $(x,y)$  the line segment excluding the end points, and by
$l_{x,y}$ the infinite line containing $x$ and $y$.

Finally, we  write $u \lesssim v$ (resp. $u\simeq v$) when $u/v$ is bounded above (resp. above and below) by positive constants.

\section{Chordal Property of Jordan Curves}\label{flatness}

For planar Jordan curves, the connection between the chordal property and the $2$-point condition is easy to establish.

\begin{prop}\label{zetabounded}
A Jordan curve $\G$ is a $K$-quasicircle if and only if
$\G$ is $(\z, r_0)$-chordal for some  $\z >0$ and $r_0 >0$. Constants $K$ and $\z_{\G}$ are quantitatively related, with $\z_{\G}\to 0$ as $K\to 1$.
\end{prop}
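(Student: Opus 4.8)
The plan is to prove the two directions separately, exploiting the Ahlfors $2$-point condition \eqref{3pts} as the bridge, since by definition a $K$-quasicircle is exactly a Jordan curve satisfying the $2$-point condition with a constant $C = C(K)$.

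\emph{From chordal to $2$-point.} Suppose $\G$ is $(\z,r_0)$-chordal. The hard case is a pair $x,y\in\G$ with $|x-y|\le r_0$; for such a pair I would bound $\diam \G(x,y)$ in terms of $|x-y|$ by a bootstrapping/dyadic argument. The chordal estimate says every $z\in\G(x,y)$ lies within $\z|x-y|$ of the line $l_{x,y}$, but it does \emph{not} directly control how far along the line the arc can wander; however, one can iterate. Pick the point $z_0\in\G(x,y)$ realizing $\diam\G(x,y)$ (up to a factor $2$, a point furthest from the midpoint of $[x,y]$), split $\G(x,y)$ into $\G(x,z_0)$ and $\G(z_0,y)$, and apply the chordal bound to each half. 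A standard geometric-series estimate then shows $\diam\G(x,y)\le c(\z)\,|x-y|$ provided $\z$ is small enough that the recursion converges — and then, crucially, one checks that the chordal hypothesis for \emph{all} scales $\le r_0$ lets us remove any smallness assumption on $\z$ itself: once $|x-y|$ is small the subarcs have chords of comparable size and still satisfy the chordal bound, so the recursion closes. For pairs with $|x-y|> r_0$, since $\G$ is a fixed Jordan curve (hence bounded, with $\G(x,y)$ of diameter $\le\diam\G$), we trivially get $\diam\G(x,y)\le(\diam\G/r_0)|x-y|$. Taking the larger of the two constants gives the $2$-point condition, hence $\G$ is a $K$-quasicircle with $K=K(\z,r_0,\diam\G)$.

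\emph{From $2$-point to chordal.} Suppose $\G$ is a $K$-quasicircle, so \eqref{3pts} holds with constant $C=C(K)$. Then for any $x,y\in\G$ and any $z\in\G(x,y)$ we have $\dist(z,l_{x,y})\le|z-x|\le\diam\G(x,y)\le C|x-y|$, so $\z_\G(x,y)\le C$ for \emph{all} pairs; thus $\G$ is $(C,r_0)$-chordal for every $r_0$. This direction is immediate and gives the crude bound $\z_\G\le C$.

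\emph{The quantitative/asymptotic claim $\z_\G\to0$ as $K\to1$.} This is the part I expect to be the main obstacle, because the crude bound $\z_\G\le C(K)$ does not tend to $0$. The point is that $\z_\G$ is a \emph{local, infinitesimal} quantity (a limit as $r_0\to0$) while the $2$-point constant is global. I would argue as follows: fix $x,y\in\G$ and let $z^\ast\in\G(x,y)$ realize (nearly) the supremum in $\z_\G(x,y)$. Apply the $2$-point condition to the three pairs $\{x,z^\ast\}$, $\{z^\ast,y\}$, $\{x,y\}$. If $\z_\G(x,y)=\d$, elementary planar geometry forces $\G(x,z^\ast)$ and $\G(z^\ast,y)$ to have diameters bounded below by roughly $|x-y|/2$ (since $z^\ast$ is at height $\d|x-y|$ over the chord, it is at distance $\gtrsim|x-y|/2$ from at least one endpoint when $\d$ is small — actually from both, up to constants). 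Then $\diam\G(x,z^\ast)+\diam\G(z^\ast,y)\gtrsim|x-y|$, whereas $\ell$ considerations or a direct comparison of these two subarc-diameters against $|x-z^\ast|$ and $|z^\ast-y|$ via \eqref{3pts} yields an inequality of the form $\sqrt{1+4\d^2}\cdot|x-y|\ge$ (sum of chords) $\ge \tfrac1C(\diam\G(x,z^\ast)+\diam\G(z^\ast,y))\ge\tfrac1C|x-y|$, which as $C\to1$ forces $\d\to0$ uniformly in $x,y$. Taking the supremum over pairs with $|x-y|\le r_0$ and then $r_0\to0$ gives $\z_\G\le\omega(K)$ with $\omega(K)\to0$ as $K\to1$. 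Making the constants in this last estimate fully explicit — tracking exactly how the height $\d$ enters the triangle inequalities — is the delicate computation, but it is elementary plane geometry once the right three pairs of points are chosen.
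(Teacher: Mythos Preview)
Your overall plan (use Ahlfors' 2-point condition \eqref{3pts} as the bridge) is right, and the easy direction (2-point $\Rightarrow$ chordal with $\z=C$) matches the paper exactly. But the other two parts contain genuine gaps.

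\emph{Chordal $\Rightarrow$ 2-point.} The dyadic recursion does not close. The chordal hypothesis confines $\G(x,y)$ to a strip of width $2\z|x-y|$ about $l_{x,y}$ but says nothing about how far the arc travels \emph{along} the line; your extremal point $z_0$ may have $|x-z_0|$ comparable to $\diam\G(x,y)$ itself, so the new chords need not be shorter than the original and no geometric series materializes (nor is it clear that the sub-chords stay below $r_0$, or that the half-arcs are the smaller-diameter ones). The paper avoids recursion with a one-step trick: project $\G(x,y)$ orthogonally onto $l_{x,y}$, obtaining a segment $[z,w]$ with $z,x,y,w$ in order; if $z\neq x$, pick $z'\in\G(x,y)$ projecting to $z$, let $l$ be the line through $x$ perpendicular to $l_{x,y}$, and take a subarc $\sigma=\G(z_1,z_2)\subset\G(x,y)$ containing $z'$ with endpoints $z_1,z_2\in l$. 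Since $z_1,z_2$ lie in the strip, $|z_1-z_2|\le 2\z|x-y|$; applying the chordal property to this \emph{perpendicular} chord gives $|z-x|=\dist(z',l)\le\z|z_1-z_2|\le 2\z^2|x-y|$. Hence $|z-w|\le(4\z^2+1)|x-y|$ and $\diam\G(x,y)\le(4\z^2+2\z+1)|x-y|$, with no smallness assumption on $\z$.

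\emph{The claim $\z_\G\to 0$ as $K\to 1$.} Your inequality chain is both reversed and, once corrected, vacuous. If $z^\ast$ lies at height $\d|x-y|$ over $l_{x,y}$, then $|x-z^\ast|+|z^\ast-y|\ge|x-y|\sqrt{1+4\d^2}$ (this is the \emph{minimum} over the position of $z^\ast$, attained when $z^\ast$ projects to the midpoint), so your first inequality points the wrong way; and bounding the sum of chords above by $2C|x-y|$ via \eqref{3pts} yields only $\d\le\tfrac12\sqrt{4C^2-1}$, which equals $\sqrt3/2$ at $C=1$, not zero. There is no elementary route through the 2-point constant alone. The paper instead invokes a stability lemma of Gehring: for every $\eta>0$ there is $K_0(\eta)>1$ such that any $K$-quasiconformal self-map $g$ of $\R^2$ with $K\le K_0$ fixing two points $z_1,z_2$ satisfies $|g(z)-z|\le\eta|z_1-z_2|$ for $|z-z_1|<|z_1-z_2|$. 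Applied (after normalization) to the quasiconformal map carrying $\mathbb S^1$ to $\G$, this forces short subarcs of $\G$ to stay $\eta$-close to circular arcs, hence to their chords, giving $\z_\G\le\eta$. This is the Mattila--Vuorinen argument and is genuinely quasiconformal, not plane-geometric.
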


The converse of the second statement is not true. Indeed, $\z_{\G}=0$ for every smooth Jordan curve $\G$.

\begin{proof}
Suppose that $\G$ is a $K$-quasicircle and $C$ is the constant in the Ahlfors $2$-point condition (\ref{3pts}) associated to $K$. Then  $\G$ is $(C, \diam \G)$-chordal.

Next suppose  that $\G$ is $(\z, r_0)$-chordal. We claim that $\G$ satisfies property (\ref{3pts}).
Let $ x,y \in \G$.
If $|x-y|\geq  r_0$, then
\[ \diam{\G(x,y)} \leq \frac{\diam{\G}}{r_0}|x-y|. \]
So, we assume $|x-y| < r_0 $, and let $[z,w]$ be the orthogonal projection of $\G(x,y)$ on $l_{x,y}$, with points $z,x,y$ and $w$ listed in their natural order on the line.
In the case that $z \neq x$, choose a point $z'\in \G(x,y)$ whose projection on  $l_{x,y}$ is $z$. Denote by $l$ the line through $x$ and orthogonal to $l_{x,y}$, and fix
a subarc $\sigma$ of $\G(x,y)$ which contains $z'$ and has endpoints, called
$z_1,z_2$, on $\G(x,y) \cap l$. Clearly $\sigma= \G(z_1,z_2)$ and $l=l_{z_1,z_2}$; and by the $(\z, r_0)$-chordal property,
$ \dist(z, l) =\dist(z', l) \leq \z|z_1-z_2| \leq 2\z^2|x-y|.$
It follows that, in all cases,  $|z-w| \leq (4\z^2+1)|x-y|$. Therefore,
\[ \diam{\G(x,y)} \leq (4 \z^2+ 2\z+1) |x-y|. \]
So $\G$ satisfies property (\ref{3pts})  with $C = \max\{4 \z^2+ 2\z+1,\frac{\diam{\G}}{r_0} \}$ and is a $K$-quasicircle for some $K$ depending on $\z,r_0$ and $\diam{\G}$.

The claim that $\z_{\G}\to 0$ as $K\to 1$ follows from a lemma of Gehring \cite[Lemma 7]{Gehring-spirals}, which states that for each $\eta>0$, there exists $K_0=K_0(\eta)>1$ such that if $g$ is a $K$-quasiconformal mapping of $\R^2$ with $K\leq K_0$, and if $g$ fixes two points $z_1$ and $z_2$, then
\[
|g(z)-z|\leq \eta |z_1-z_2|,  \quad  \text{when}\,\, |z-z_1|<|z_1-z_2|.
\]
Quasiconformality in \cite{Gehring-spirals} is defined  using the conformal modulus of curve families, which is quantitatively equivalent to the notion of quasiconformality given in Section \ref{prelim} (See \cite[Theorem 32.3]{Vais1}). This line of reasoning has been used by Mattila and Vuorinen in \cite[Theorem 5.2]{MaVu}.
\end{proof}

By Proposition \ref{zetabounded}, the following will be a  corollary to Theorem \ref{betathmLQC}.

\begin{cor}
There exists a constant $K_0 > 1$ such that all  $ K_0$-quasicircles have the \emph{LQC} property.
\end{cor}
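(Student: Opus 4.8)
The plan is to deduce the corollary directly from Proposition~\ref{zetabounded} together with Theorem~\ref{betathmLQC}. The key observation is that Proposition~\ref{zetabounded} asserts that the chordal parameter $\z_\G$ and the quasicircle distortion $K$ are quantitatively related, with $\z_\G \to 0$ as $K\to 1$. Spelling this out: there is a function $K\mapsto \eta(K)$ with $\eta(K)\to 0$ as $K\to 1^+$ such that every $K$-quasicircle $\G$ satisfies $\z_\G \le \eta(K)$. Since $\eta(K)\to 0$, we may choose $K_0>1$ so small that $\eta(K)<1/2$ for all $1\le K\le K_0$. Then every $K_0$-quasicircle $\G$ has $\z_\G \le \eta(K_0) < 1/2$.

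Having secured $\z_\G<1/2$, I would simply invoke Theorem~\ref{betathmLQC}, which states that $\z_\G<1/2$ implies the LQC property. This gives $\e_0>0$ and $K'\ge1$ (depending on $\z_\G$, hence ultimately on $K_0$ and $\diam\G$) so that $\g_\e$ is a $K'$-quasicircle for all $0<|\e|<\e_0$, which is exactly the LQC property. So the proof is essentially a two-line chain: $K_0$-quasicircle $\Rightarrow$ $\z_\G<1/2$ (by the quantitative half of Proposition~\ref{zetabounded} and the choice of $K_0$) $\Rightarrow$ LQC (by Theorem~\ref{betathmLQC}).

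The only subtlety worth flagging is the direction of the quantitative statement in Proposition~\ref{zetabounded}. The proof of that proposition shows $\z_\G\to0$ as $K\to1$ via Gehring's displacement lemma \cite[Lemma 7]{Gehring-spirals}, which controls how far a near-conformal map moves points fixing two prescribed points; applied to the quasiconformal homeomorphism realizing $\G$ as the image of $\mathbb S^1$, this bounds $\sup_{|x-y|\le r_0}\z_\G(x,y)$ in terms of $K$ alone for all small $r_0$, hence bounds $\z_\G$. So the implication ``small $K$ $\Rightarrow$ small $\z_\G$'' is uniform over all $K$-quasicircles, with no dependence on $\G$ itself, which is precisely what is needed to pick a single universal $K_0$. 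There is no real obstacle here — the content has already been established — so the ``proof'' is just the assembly of these two facts and the remark that $K_0$ can be taken absolute while the resulting quasicircle constant for the level sets depends additionally on $\diam\G$.

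\begin{proof}
By the proof of Proposition~\ref{zetabounded}, there is a function $\eta\colon(1,\infty)\to(0,\infty)$ with $\eta(K)\to0$ as $K\to1^+$ such that every $K$-quasicircle $\G$ satisfies $\z_\G\le\eta(K)$. Choose $K_0>1$ so small that $\eta(K)<1/2$ for all $1\le K\le K_0$. If $\G$ is a $K_0$-quasicircle, then $\z_\G\le\eta(K_0)<1/2$, and Theorem~\ref{betathmLQC} shows that $\G$ has the LQC property.
\end{proof}
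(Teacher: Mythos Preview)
Your proposal is correct and follows exactly the route indicated by the paper, which simply remarks that the corollary follows from Proposition~\ref{zetabounded} (specifically the clause $\z_\G\to 0$ as $K\to 1$) together with Theorem~\ref{betathmLQC}. Your discussion of uniformity in $\G$ for the bound $\z_\G\le\eta(K)$ is a helpful gloss but does not depart from the paper's intended argument.
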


\medskip

Mattila and Vuorinen \cite{MaVu} introduced the \emph{linear approximation property} to study geometric properties of $K$-quasispheres with $K$ close to $1$. Let $k \in \{1,2,\dots,n-1\}$,  $0 \leq \d < 1$, and $r_0>0$.  A set $Z$ in $\R^n$ satisfies a  $(k, \d,r_0)$-linear approximation property  if for each $x \in Z$ and each $0<r<r_0$ there exists an affine $k$-plane $P$  through $x$ such that
\[\dist(z,P) \le \d r\quad \text{ for all}\,\, z\in Z \cap B(x,r).\]
In the same year, Jones \cite{Jo} introduced a parameter, now known as the \emph{Jones beta number}, to measure
the oscillation of a set at all points and in all scales,
for the investigation of the "traveling salesman problem". Later, beta number has been used by Bishop and Jones to study problems on harmonic measures and Kleinian groups.
As it turns out, the Jones beta number and the $\d$-parameter of Mattila and Vuorinen are essentially equivalent.

For planar quasicircles, the chordal property and the linear approximation property are quantitatively related as follows.

\begin{lem}\label{flatequiv}
Let $\G$ be a Jordan curve in $\R^2$.
If  $\G$ has  the  $(\z, r_0)$-chordal property for some $0<\z <1/4$, then it is
$(1, 4\z, r_1)$-linearly approximable, where $r_1= \min\{\frac{r_0}{2}, \frac{\diam \G}{C}\}$ and $C=C(\z,r_0,\diam \G)>1$ is a constant.
On the other hand, if $\G$ is a $K$-quasicircle that has the $(1, \d,r_0)$-linear approximation property, then it is $(C'^2\d, r_0/C')$-chordal, for some constant $C'=C'(K)>1$.
\end{lem}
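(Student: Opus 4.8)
The plan is to prove the two implications separately. Throughout I use Proposition~\ref{zetabounded} to pass between the chordal condition and the quasicircle ($2$-point) condition, together with the perpendicular-line device from the proof of Proposition~\ref{zetabounded} to control how far a flat subarc overshoots its chord.

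\emph{Chordal $\Rightarrow$ linearly approximable.}  Fix $x\in\G$ and $0<r<r_1$ with $r_1=\min\{r_0/2,\diam\G/C\}$, where $C=C(\z,r_0,\diam\G)$ is taken large (in particular $r<\diam\G/4$).  Let $\sigma$ be the component of $\G\cap\overline{B(x,r)}$ containing $x$; it is a subarc with endpoints $p,q\in S(x,r)$, so $|p-q|\le 2r<r_0$, and since $r\ll\diam\G$ the complementary subarc has diameter $\ge\diam\G-2r>2r\ge\diam\sigma$, whence $\sigma=\G(p,q)$.  By the $(\z,r_0)$-chordal property, $\dist(w,l_{p,q})\le\z|p-q|\le 2\z r$ for all $w\in\sigma$, and in particular $\dist(x,l_{p,q})\le 2\z r$.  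Let $P$ be the line through $x$ parallel to $l_{p,q}$; then $\dist(w,P)\le\dist(w,l_{p,q})+\dist(x,l_{p,q})\le 4\z r$ for every $w\in\sigma$.  It remains to bound $\dist(z,P)$ for $z\in\G\cap B(x,r)$ with $z\notin\sigma$.  Such a $z$ is joined to $x$ along $\G$ by the smaller-diameter subarc $\G(x,z)$, which must leave $\sigma$ through one of its endpoints, say $q\in\G(x,z)$; since $|x-z|<r\le r_0$, the chordal property gives $\dist(q,l_{x,z})\le\z|x-z|<\z r$, while $|x-q|=r$.  Choosing coordinates with $x$ at the origin, $P$ the horizontal axis, $l_{p,q}=\{y=h\}$ with $|h|\le 2\z r$, and $q=(q_1,h)$, $q_1^{2}+h^{2}=r^{2}$, the inequality $\dist(q,l_{x,z})<\z r$ forces the angle $\phi$ between $l_{x,z}$ and $P$ to satisfy $|q_1||\sin\phi|<\z r+|h|\le 3\z r$, hence $|\sin\phi|<3\z/\sqrt{1-4\z^{2}}<2\sqrt3\,\z$, the bound $\z<1/4$ being used to keep $\sqrt{1-4\z^{2}}>\sqrt3/2$.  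Since $z\in l_{x,z}$ and $|x-z|<r$, this gives $\dist(z,P)=|x-z||\sin\phi|<2\sqrt3\,\z r<4\z r$.  Thus $P$ is the desired approximating line and $\G$ is $(1,4\z,r_1)$-linearly approximable.

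\emph{$K$-quasicircle $+$ linearly approximable $\Rightarrow$ chordal.}  Let $C_0=C_0(K)$ be the $2$-point constant of $\G$, so that $\diam\G(u,v)\le C_0|u-v|$ for all $u,v\in\G$, and let $C'=C'(K)$ be a suitably large multiple of $C_0$.  Given $x,y\in\G$ with $|x-y|\le r_0/C'$, we have $\G(x,y)\subseteq B(x,2C_0|x-y|)$ with $2C_0|x-y|< r_0$, so applying the $(1,\d,r_0)$-linear approximation property at $x$ and scale $2C_0|x-y|$ yields a line $P\ni x$ with $\dist(w,P)\le 2C_0\d|x-y|$ for all $w\in\G(x,y)$.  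In particular $\dist(y,P)\le 2C_0\d|x-y|$, so the angle $\psi$ between $l_{x,y}$ and $P$ satisfies $\sin\psi\le 2C_0\d$; and for $z\in\G(x,y)$, using $|x-z|\le\diam\G(x,y)\le C_0|x-y|$,
\[
\dist(z,l_{x,y})\le\dist(z,P)+\bigl(|x-z|+\dist(z,P)\bigr)\sin\psi\le {C'}^{2}\d\,|x-y|
\]
after absorbing constants into $C'$.  Hence $\z_{\G}(x,y)\le {C'}^{2}\d$ whenever $|x-y|\le r_0/C'$, i.e.\ $\G$ is $({C'}^{2}\d,\,r_0/C')$-chordal.

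I expect the routine parts to be the identity $\sigma=\G(p,q)$ (where the smallness of $r$ compared with $\diam\G$, hence the $\diam\G/C$ term in $r_1$, is used), the choice of scale $2C_0|x-y|$, and the trigonometry.  The one genuinely delicate step is the treatment, in the first implication, of the points of $\G\cap B(x,r)$ lying outside the central crosscut $\sigma$ — that is, the possibility that $\G\cap B(x,r)$ is disconnected.  The point is that every such point is still tied to $x$ through an endpoint of $\sigma$ by a flat subarc of $\G$, and it is precisely here that $\z<1/4$ is needed: with $\z$ allowed to tend to $1/2$ the angle estimate, and with it the linear-approximation constant, would blow up.
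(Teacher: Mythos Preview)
Your proof is correct in both directions. The second implication is handled essentially as in the paper: apply the linear approximation property at scale comparable to $C_0|x-y|$, then convert the distance-to-$P$ bound into a distance-to-$l_{x,y}$ bound via the angle between the two lines.

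For the first implication you take a genuinely different route. You work with the single component $\sigma$ of $\G\cap\overline{B(x,r)}$ through $x$, and then treat stray points $z\in\G\cap B(x,r)\setminus\sigma$ by a separate trigonometric argument: any such $z$ is linked to $x$ through an endpoint $q$ of $\sigma$, the chordal property forces $q$ to lie close to $l_{x,z}$, and combining this with $|q|=r$ and $|h|\le 2\z r$ bounds the angle between $l_{x,z}$ and $P$. The paper instead chooses the \emph{extreme} crossing points $x_1,x_2\in\G\cap S(x,r)$ so that one of the subarcs $\G\setminus\{x_1,x_2\}$ lies entirely outside $\overline{B(x,r)}$; then $\G\cap B(x,r)$ is automatically contained in the other subarc $\tau$, and a single application of the chordal property to $\G(x_1,x_2)=\tau$ finishes the job. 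To identify $\tau$ with $\G(x_1,x_2)$ the paper invokes the $2$-point condition (via Proposition~\ref{zetabounded}), which is where its constant $C=C(\z,r_0,\diam\G)$ enters. Your argument avoids the $2$-point condition entirely for this direction and needs only $r<\diam\G/4$, but pays with the extra trigonometric step; it also makes the hypothesis $\z<1/4$ do real work in the inequality $|\sin\phi|<3\z/\sqrt{1-4\z^2}<4\z$, whereas in the paper's argument $\z<1/4$ is used only so that $4\z<1$.
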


\begin{proof}
Suppose that $\G$ is  $(\z, r_0)$-chordal. Then $\G$ is a $K$-quasicircle by Proposition \ref{zetabounded}, hence satisfies
the $2$-point condition (\ref{3pts}) for some $C>1$; here $K$ and $C$ depend on $\z, r_0$ and $\diam \G$.
Let $0< r < \min\{\frac{r_0}{2}, \frac{\diam \G}{6 C}\}$; take $x \in \G$, and $x' \in \G\setminus B(x,r)$ such that $|x-x'| \geq \frac{\diam{\G}}{2}$. Let $x_1,x_2$ be the  points in $\G \cap S(x,r)$ with the property that one of the subarcs $\G\setminus \{x_1,x_2\}$ contains $x'$ and lies entirely outside $\overline{B}(x,r)$, and the other subarc, called $\tau$, contains $x$. Since $\diam(\G\setminus \tau) \geq |x-x'| - r \geq \frac{\diam{\G}}{2} - r > 2 C r \geq C |x_1-x_2|$,  we have $\diam \tau \leq C|x_1-x_2|$ and
 $\G(x_1,x_2) = \tau$.
Trivially, $\dist(x,l_{x_1,x_2}) \leq 2\z r$.
 Then,  for $y\in \G(x_1,x_2)$ and the line $l$ through $x$ and parallel to $l_{x_1,x_2}$, we have
\[ \dist(y,l) \leq \dist(y, l_{x_1,x_2}) + \dist( l_{x_1,x_2},l) \leq \z|x_1-x_2|+2\z r \leq 4 \z r;\]
and the first claim follows.

For the second claim, suppose that $\G$ is a $K$-quasicircle that has the $(1, \d,r_0)$-linear approximation property. So $\G$ satisfies
the $2$-point condition (\ref{3pts}) for some $C=C(K)\ge 1$.
Take $x,y \in \G$ with $0< |x-y | < \frac{r_0}{4C}$ and  $r = (C+1)|x-y|$, then
 $\G(x,y) \subset B(x,r)$.  Since $r <r_0$,
 there exists a line $l$ containing $x$ such that
\[ \G(x,y) \subset \{ z \in B(x,r) \colon \dist(z,l) \leq \d r \}. \]
In particular, $\dist(y,l)\leq \d r$.
Given $z\in \G(x,y)$, take a point $z' \in l\cap B(x,r)$ with $|z-z'|\le \d r$; then, from elementary geometry we get
\[
\dist(z', l_{x,y}) \leq \frac{\dist(y,l)}{|x-y|}r  \leq (C+1)\d r.
\]
It follows that
$\dist(z, l_{x,y}) \leq |z-z'|+ \dist(z', l_{x,y}) \leq  (C+2)\d r = (C+2)(C+1) \d |x-y|$.
Hence, $\z(x,y) < 6 C^2 \d$,
and the second claim is proved.
\end{proof}

\section{Geometry of Level Sets}\label{geometry}

Let $\G$ be a Jordan curve in $\R^2$, $\Omega$ be the bounded component of  $\R^2\setminus \G$.
For any $\e\neq 0$,  define the open set

\[
\D_{\e} =
\begin{cases}
\, \{ x\in \R^2 \colon \ddist(x,\G) > \e \}, & \e>0,\\
\, \{ x\in \R^2 \colon \ddist(x,\G) < \e \}, & \e<0.
\end{cases}
\]
In general, $\D_{\e}$ need not be connected, and $\overline{\D_\e}$ and $\D_\e \cup \g_\e$ may not be equal (see Figure \ref{fig:figure2}).

\medskip

However, \emph{for any $\e > 0$, the sets $\overline{\Omega} \setminus \D_{\e}$, $(\R^2\setminus \Omega)\setminus \D_{-\e}$, and $\R^2\setminus (\D_{\e}\cup\D_{-\e})$ are path-connected.} Indeed, given $x,y \in \overline{\Omega} \setminus \D_{\e}$,  take $x',y' \in \G$ such that $|x-x'| = \dist(x,\G)$ and $|y-y'| = \dist(y,\G)$. Note  that $[x,x']$ and $[y,y']$ are entirely in $\overline{\Omega} \setminus \D_{\e}$. So $x,y$ can be joined in $\overline{\Omega} \setminus \D_{\e}$ by the arc $[x,x'] \cup \G(x',y') \cup [y,y']$. Path-connectedness of  the other two sets  can be proved analogously.

\medskip
Furthermore,
\emph{given  $x, y \in \Omega$, let  $x', y'$ be  points in  $\G$ with the property that
$|x-x'|=\emph{\dist}(x,\G)$ and $|y-y'|= \emph{\dist}(y,\G)$. If $x,y,x',y'$ are not collinear
then the segments $[x,x']$, $[y,y']$ do not intersect except perhaps at their endpoints.}
Indeed, if there is a point $z \in [x,x']\cap[y,y']$  which is not $ x'$ or $ y'$, then $|z-x'|= |z-y'|$.
By the triangle inequality,
\[
|x-y'| < |x-z|+ |z-y'| = |x-z|+ |z-x'| = \dist(x, \G),
\]
which is a contradiction. The same claim is true when $x,y \in \R^2 \setminus \overline \Omega$. The non-crossing property of $[x,x'],[y,y']$ is a special case of Monge's observation on optimal transportation; see \cite[p. 163]{Villani}.
\bigskip

In the following, a point will be considered as a degenerate arc.

Given
a closed subset $\L$ of $\G$ and a number $\e \neq 0$, we define
\[\g_{\e}^{\L}= \{ x \in \g_{\e} \colon \dist(x,\L) = |\e| \}.\]
In general, the set $\g_{\e}^{\L}$ may be empty even when $\L$ is a non-trivial arc (see Figure \ref{fig:figure2}).
However, $\g_{\e}^{\L}$ is an arc when  $\g_\e$ is a Jordan curve and $\L$ is connected, as we see from the following lemma.

\begin{lem}\label{orientation}
Let $\G$ be a Jordan curve in $\R^2$, and assume that  for some $\e\neq 0$, the level set $\g_{\e}$ is a Jordan curve. If $\L$ is a closed subarc of $\G$ and $\g_{\e}^{\L}$ is nonempty, then $\g_{\e}^{\L}$ is a subarc of $\g_{\e}$.
\end{lem}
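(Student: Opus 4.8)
The plan is to set up a monotone correspondence between $\G$ and the Jordan curve $\g_\e$, and to show that the set $\g_\e^\L$ is exactly the image of $\L$ under a suitable nearest-point assignment, hence an arc. First I would fix (say) $\e>0$; the case $\e<0$ is identical after replacing $\Omega$ by $\R^2\setminus\overline\Omega$. Parametrize $\G$ by a homeomorphism of $\mathbb S^1$ and likewise parametrize $\g_\e$ by a homeomorphism of $\mathbb S^1$; since $\g_\e\subset\Omega$ separates $\G$ from the ``interior'' of $\D_\e$, these two circles bound a common annular region. For each $x\in\g_\e$ choose a point $\pi(x)\in\G$ with $|x-\pi(x)|=\dist(x,\G)=\e$, and note $[x,\pi(x)]\subset\overline\Omega\setminus\D_\e$. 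By the non-crossing property established in Section \ref{geometry} (Monge's observation), for distinct $x,y\in\g_\e$ the segments $[x,\pi(x)]$ and $[y,\pi(y)]$ are disjoint except possibly at endpoints — here one must observe that $x,y,\pi(x),\pi(y)$ cannot be collinear in the degenerate way that causes overlap, or handle that degenerate case by a small perturbation argument. This disjointness, together with a Jordan-curve / planarity argument (the segments are disjoint crosscuts of the annulus between $\g_\e$ and $\G$), forces $\pi$ to be monotone: it respects the cyclic order of the two Jordan curves.

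The key step is then: $\g_\e^\L=\pi^{-1}(\L)$ when the latter is interpreted correctly, and this set is connected. One inclusion is immediate: if $x\in\g_\e^\L$ then $\dist(x,\L)=\e=\dist(x,\G)$, so the nearest point of $\G$ to $x$ can be taken in $\L$, giving $x\in\pi^{-1}(\L)$ for an appropriate choice of $\pi$. Conversely, if $x$ has a nearest point $\pi(x)$ lying in $\L$, then $\dist(x,\L)\le|x-\pi(x)|=\e$; and since $\L\subset\G$ we have $\dist(x,\L)\ge\dist(x,\G)=\e$, so $x\in\g_\e^\L$. Thus $\g_\e^\L$ is the set of points of $\g_\e$ admitting \emph{some} nearest point on $\G$ in $\L$. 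To see this is a subarc, let $x,y\in\g_\e^\L$ and let $z$ lie on $\g_\e$ in the (closed) subarc between $x$ and $y$ that corresponds under the monotone map $\pi$ to the subarc $\G(\pi(x),\pi(y))\subset\L$; one shows $z\in\g_\e^\L$ by a crosscut/separation argument: the segment $[z,\pi(z)]$ is a crosscut disjoint from $[x,\pi(x)]$ and $[y,\pi(y)]$, so $\pi(z)$ is trapped in the boundary subarc of $\G$ between $\pi(x)$ and $\pi(y)$, which lies in $\L$ by convexity of subarcs under the cyclic order. Hence every point of $\g_\e$ between $x$ and $y$ lies in $\g_\e^\L$, and $\g_\e^\L$ is connected; being also closed in the Jordan curve $\g_\e$, it is a subarc (or a single point, the degenerate arc).

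The main obstacle is the monotonicity of the nearest-point assignment $\pi$: because nearest points need not be unique, $\pi$ is only a relation, not a function, and the non-crossing lemma from Section \ref{geometry} is stated under a non-collinearity hypothesis. The careful work is in showing that one can make \emph{consistent} choices — or, better, that for the two extreme choices of nearest points the induced order on $\g_\e$ still agrees with the cyclic order on $\G$ — and in ruling out or absorbing the collinear degeneracies (which correspond precisely to the branch-point phenomena visible in Figure \ref{fig:figure2}). I would handle this by working with the full set $N(x)=\{w\in\G:|x-w|=\e\}$ of nearest points, showing each $N(x)$ is itself a subarc of $\G$ (an easy compactness-plus-connectedness argument, or direct planar reasoning from the crosscuts), and that $x\mapsto N(x)$ is monotone in the sense that $N(x)$ and $N(y)$ for $x\ne y$ meet in at most a point and appear in the same cyclic order as $x,y$; the desired conclusion $\g_\e^\L=\{x:N(x)\cap\L\ne\emptyset\}$ then follows, and connectedness is read off from the monotonicity exactly as above. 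Everything else — closedness of $\g_\e^\L$ in $\g_\e$, the reduction to $\e>0$, the degenerate-arc convention — is routine.
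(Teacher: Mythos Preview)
Your crosscut/separation idea is essentially what the paper uses, but two steps in your formalization do not hold. First, the claim that each $N(x)=\{w\in\G:|x-w|=\e\}$ is a subarc of $\G$ is false even under the hypothesis that $\g_\e$ is a Jordan curve: take $\G=\partial([-2,2]^2)$ and $\e=1$, so that $\g_\e=\partial([-1,1]^2)$ is a Jordan curve, yet the corner $x=(1,1)\in\g_\e$ has $N(x)=\{(2,1),(1,2)\}$, two isolated points. Your monotonicity statement for $x\mapsto N(x)$ is phrased in terms of these sets being arcs and therefore does not make sense as written. Second, the subarc of $\g_\e$ between $x$ and $y$ that you choose to examine is selected by invoking the very monotone correspondence you are trying to establish, and the side assertion $\G(\pi(x),\pi(y))\subset\L$ need not hold, since the smaller-diameter subarc of $\G$ between $\pi(x)$ and $\pi(y)$ may well lie outside $\L$.

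The paper avoids both issues by arguing directly by contradiction for a fixed pair $x,y\in\g_\e^\L$ and fixed choices $x'\in N(x)\cap\L$, $y'\in N(y)\cap\L$. Let $\L_1\subset\L$ be the subarc joining $x',y'$, and suppose some $z\in\l_1$ has a nearest point $z'\in\G\setminus\L$. The curve $[x,x']\cup\l_1\cup[y,y']\cup\L_1$ is Jordan and bounds a region $U_1$; the mutual disjointness of the open segments $(x,x'),(y,y'),(z,z')$ from one another and from $\g_\e$ forces $\l_2\setminus\{x,y\}\subset U_1$, hence $U_2\subset U_1$ for the symmetric region $U_2$ built from $\l_2$. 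Running the same argument with the roles of $\l_1,\l_2$ reversed would give $U_1\subset U_2$, so $U_1=U_2$, which is impossible since their boundaries differ. Hence $\l_2\subset\g_\e^\L$. No global monotone correspondence and no connectedness of $N(x)$ is needed---only the non-crossing of three specific nearest-point segments and a single Jordan-curve separation. Your crosscut sketch in the second paragraph is already close to this; the fix is to drop the set-valued monotonicity framework and argue directly on the quadrilateral $U_1$.
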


\begin{proof}
It suffices to prove that if $x$ and $y$ are two distinct points in $\g_{\e}^{\L}$ then, one of the two subarcs $\l_1,\l_2$ of $\g_{\e}$ connecting $x$ and $y$ is entirely in $\g_{\e}^{\L}$.

Assume first that $\e>0$. We claim that if $\l_1 \setminus\g_{\e}^{\L} \neq \emptyset$ then $\l_2 \subset \g_{\e}^{\L}$.
Take $ z \in \l_1 \setminus\g_{\e}^{\L}$, $x',y' \in \L$ and $z' \in \G\setminus \L$ such that
\[ |x-x'| = |y-y'| = |z-z'| = \e, \]
and let $\L_1$ be the subarc of $\L$ that joins $x',y'$ ($\L_1$ could be just a point). We  know that the open line segments $(x,x'),(y,y'),(z,z')$ and the Jordan curve $\g_{\e}$ do not intersect one another.
Let $U_1$ be the quadrilateral (possibly degenerated in the case $x'=y'$) enclosed by the Jordan curve $[x,x']\cup \l_1 \cup [y,y'] \cup \L_1$. Then the open arc $\l_2\setminus \{x,y\}$ must be contained in  $U_1$. For otherwise, $\l_2\setminus \{x,y\}$ would intersect either the arc $[x,x']\cup \l_1 \cup [y,y'] $ or the segment $ [z,z']$; this is impossible in view of properties of the distance function $\dist(\cdot,\G)$. Therefore, the quadrilateral $U_2$ enclosed by the Jordan curve $[x,x']\cup \l_2 \cup [y,y'] \cup \L_1$ is contained in $U_1$.
Suppose now that $\l_2 \subset \g_{\e}^{\L}$ is false. Then, by the argument above with the roles of $\l_1$  and  $\l_2$ reversed, we get $U_1 \subset U_2$. Hence $U_1=U_2$, which is impossible. This proves the claim.

When $\e<0$, we choose $J_1 = [x,x']\cup \l_1\cup [y,y'] \cup \L_1$ and $J_2 = [x,x']\cup \l_2 \cup [y,y'] \cup \L_1$ as before, however define $U_1$ and $U_2$ to be the unbounded components of $\R^2 \setminus J_1$  and $\R^2 \setminus J_2$, respectively, then proceed as above.
\end{proof}

We show next that when two points $x,y$ on a level Jordan curve $\g_\e$ have a common closest point on $\G$, $\g_\e(x,y)$ is a circular arc.

\begin{lem}\label{circgamma}Let $\G$ be a Jordan curve in $\R^2$, and assume that the level set $\g_{\e}$ is a Jordan curve for some $\e \neq 0$. Suppose  that there exist $x,y \in \g_{\e}$ and $ z\in \G$ such that $|x-z| = |y-z| = |\e|$. Then $\g_{\e}(x,y)$ is a circular arc on $S(z,|\e|)$ of length at most $\pi |\e|$.
\end{lem}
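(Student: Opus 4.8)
The plan is to show that $\g_\e(x,y)$ is contained in the circle $S(z,|\e|)$ and then control its length. The containment is the heart of the matter, and I would handle the two sign cases separately, using Lemma \ref{orientation} as the organizing tool.

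First suppose $\e>0$. Let $\L$ be a small closed subarc of $\G$ containing $z$; then $x,y\in\g_\e^{\{z\}}\subset\g_\e^{\L}$, so by Lemma \ref{orientation} one of the two subarcs of $\g_\e$ joining $x$ and $y$ — call it $\sigma$ — lies entirely in $\g_\e^{\L}$. Now I want to upgrade this to: one of the subarcs joining $x$ and $y$ lies in $\g_\e^{\{z\}} = \g_\e\cap S(z,\e)$. The idea is to run the argument of Lemma \ref{orientation} with the degenerate arc $\L_1 = \{z\}$: form the (possibly degenerate) quadrilateral $U_1$ enclosed by the Jordan curve $[x,z]\cup\sigma_1\cup[z,y]$ where $\sigma_1$ is one of the two subarcs, and $U_2$ the region enclosed using the other subarc $\sigma_2$. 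If a point $w\in\sigma_1$ has a closest point $w'\in\G$ with $w'\neq z$, then the open segment $(w,w')$ is disjoint from $\g_\e$ and from $[x,z]\cup[y,z]$ (using the non-crossing property of distance-realizing segments and the fact that $x',y'$ degenerate to $z$), forcing a containment relation $U_2\subset U_1$; reversing roles gives $U_1\subset U_2$, hence $U_1=U_2$, a contradiction unless one subarc is entirely in $\g_\e\cap S(z,\e)$. The case $\e<0$ is handled the same way with $U_1,U_2$ taken to be the unbounded complementary components, exactly as in Lemma \ref{orientation}.

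Once $\g_\e(x,y)$ is known to be a connected subset of the circle $S(z,|\e|)$, it is automatically a circular arc. For the length bound: by definition $\g_\e(x,y)$ is the subarc of $\g_\e$ joining $x,y$ of smaller diameter (or either, if equal). A circular arc on $S(z,|\e|)$ of length greater than $\pi|\e|$ has diameter equal to the full diameter $2|\e|$ of the circle, while its complementary arc on the circle has strictly smaller diameter; but I must be careful that the complementary circular arc is actually a subarc of $\g_\e$. Since $\g_\e$ is a Jordan curve containing the two circular arcs' common endpoints $x,y$, and $\g_\e(x,y)$ is one of the two $\g_\e$-subarcs between them, the \emph{other} $\g_\e$-subarc has diameter at least that of $\g_\e(x,y)$; combined with the diameter comparison on the circle this forces $\ell(\g_\e(x,y))\le\pi|\e|$. (If the arc had length exactly the boundary case one checks directly.)

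The main obstacle I anticipate is the first paragraph's topological step: carefully justifying that the region-nesting argument from Lemma \ref{orientation} still runs when $\L_1$ is the single point $z$ and when the "exceptional" point $w$ is allowed to have a closest point coinciding with $z$ — i.e. making precise that the segments $(w,w')$ for $w\in\g_\e\cap S(z,|\e|)$ with $w'=z$ are radii of $S(z,|\e|)$ and hence cannot interfere with the Jordan-curve separation argument, while any $w$ with $w'\neq z$ produces the contradictory nesting. Everything else — the passage from "connected subset of a circle" to "circular arc" and the elementary length/diameter comparison — is routine.
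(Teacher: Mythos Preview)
Your first step is needlessly circuitous. The paper explicitly allows degenerate arcs in Lemma~\ref{orientation} (``a point will be considered as a degenerate arc''), so you can apply that lemma directly with $\L=\{z\}$ and conclude at once that $\g_\e^{\{z\}}=\g_\e\cap S(z,|\e|)$ is a subarc of $\g_\e$; there is no need to start with a small arc $\L$ and then re-run the proof of Lemma~\ref{orientation} to ``upgrade''. This is exactly what the paper does, in one line.

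More seriously, there is a genuine gap in your second step. What the first step gives you is that \emph{one} of the two $\g_\e$-subarcs between $x$ and $y$, say $\alpha$, lies on $S(z,|\e|)$; it does \emph{not} tell you that $\alpha=\g_\e(x,y)$. Your diameter comparison does not close this gap. Suppose $\alpha$ were the major circular arc, so $\ell(\alpha)>\pi|\e|$ and $\diam\alpha=2|\e|$. If the other $\g_\e$-subarc $\beta$ happens to have $\diam\beta\ge 2|\e|$, then $\g_\e(x,y)=\alpha$ and nothing you have written produces a contradiction; the fact that the complementary \emph{circular} arc of $\alpha$ on $S(z,|\e|)$ has smaller diameter is irrelevant, since (as you yourself note) you do not know it lies on $\g_\e$. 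The paper rules out the major-arc case by a different, topological argument: if $\ell(\alpha)>\pi|\e|$ then the center $z$ lies in the open region $U$ enclosed by $\alpha\cup[x,y]$; since $\G$ is connected and not contained in $U$, it must cross $\partial U$, and as $\G\cap\alpha=\emptyset$ it must meet the open chord $(x,y)$ at some point $p$. Then $|x-p|+|p-y|=|x-y|<2|\e|$ forces $\min(|x-p|,|p-y|)<|\e|$, contradicting $x,y\in\g_\e$. Once the major-arc case is excluded, $\diam\alpha=|x-y|\le\diam\beta$, which identifies $\alpha$ with $\g_\e(x,y)$ and gives the length bound simultaneously.
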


\begin{proof}
By Lemma \ref{orientation}, $ \g_{\e}^{\{z\}}=\{w\in \g_\e\colon |w-z|=\e\}$ is a subarc of $\g_\e \cap S(z,|\e|)$.
Since $\{x,y\}\subset \g_{\e}^{\{z\}}$,
$\g_{\e}(x,y)$ is one of the two subarcs  of $S(z,|\e|)$ that connects $x$ and $y$.

Suppose that  $\ell(\g_{\e}(x,y))>\pi |\e|$.  Then
the domain $U$ enclosed by the Jordan curve $\g_{\e}(x,y)\cup [x,y]$ contains precisely one point from $ \G$, namely the point $z$;  all other points on $\G$ are  in the exterior of $U$. So, $\G$ intersects the segment $[x,y]$; consequently, $\dist(x,\G)<|\e|$ and $ \dist(y,\G) < |\e|$. This is a contradiction.
\end{proof}

The following lemma shows that components of  $\D_\e$ satisfy a weak form of one part of  \emph{linearly local connectedness} introduced in \cite{Gehring-Vaisala}; see also \cite[p. 67]{Gehring-ICM}.
In particular, \emph{$\D_\e$ has no inward cusps.}

\begin{lem}\label{curvesincomp}
Let $\G$ be a Jordan curve, $\e\neq 0$ and $D$ a connected component of $\D_{\e}$. Then for any $x,y \in D$ with $|x-y|\leq 2|\e|$, there exists a polygonal arc $\tau$ in $D$ that joins $x$ and $y$ and has  $\diam{\tau} \leq 5|x-y|$.
\end{lem}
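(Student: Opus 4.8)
The plan is to construct the arc $\tau$ explicitly by going from $x$ to $y$ through nearly-closest points on $\G$, staying in the sublevel region $\D_\e$ the whole time. First I would treat the case $\e>0$ (the case $\e<0$ being symmetric, replacing $\Omega$ by its complement and ``$>$'' by ``$<$''). Let $x,y\in D$ with $0<|x-y|\le 2|\e|$, and pick $x',y'\in\G$ with $|x-x'|=\dist(x,\G)$, $|y-y'|=\dist(y,\G)$; since $x,y\in\D_\e$ we have $\dist(x,\G)>\e$ and $\dist(y,\G)>\e$. The segments $[x,x']$ and $[y,y']$ are \emph{not} inside $D$ --- they hit $\g_\e$ --- so instead I truncate them: let $a$ be the point on $[x,x']$ with $|a-x'|=\e$ (equivalently, $a$ is where $[x,x']$ last meets $\overline{\D_\e}$ as one moves toward $x'$), and similarly $b$ on $[y,y']$ with $|b-y'|=\e$. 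Then $[x,a]\subset\overline{\D_\e}$ and $[y,b]\subset\overline{\D_\e}$, and $a,b$ lie on $\g_\e$, with $\dist(a,\G)=\dist(b,\G)=\e$ realised at $x',y'$ respectively. The candidate arc is
\[
\tau_0=[x,a]\cup\alpha\cup[b,y],
\]
where $\alpha$ is a carefully chosen arc from $a$ to $b$ that stays in $\overline{\D_\e}$ and tracks the subarc $\G(x',y')$ at distance roughly $\e$; concretely, $\alpha$ could be taken as (a polygonal approximation of) the inner parallel curve $\{p+\e\,\nu(p): p\in\G(x',y')\}$ pushed slightly into $\D_\e$, or more robustly as a shortest path in $\overline{\D_\e}$ joining $a$ to $b$. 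The length estimates: $|x-a|=\dist(x,\G)-\e$ and $|y-b|=\dist(y,\G)-\e$, and since $|x-y|\le 2\e$ one checks $\dist(x,\G)\le\dist(y,\G)+|x-y|\le\dots$; I would bound $\diam\G(x',y')$ and hence $\diam\alpha$ in terms of $|x'-y'|\le|x-x'|+|x-y|+|y-y'|$, and then relate everything back to $|x-y|$. The constant $5$ should fall out of combining $|x-a|+|a-b|+|b-y|$ with the triangle inequality after absorbing the $\le 2|\e|$ hypothesis; one expects the worst case to be when both $x$ and $y$ are deep inside $\D_\e$ with $x',y'$ far apart, and the $2|\e|$ bound on $|x-y|$ is exactly what keeps $x',y'$ close enough that $\G(x',y')$ (the \emph{smaller-diameter} subarc) is short.

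The cleanest rigorous route avoids parallel curves entirely: since $\overline{\D_\e}$ (or rather the relevant component) is closed, connected and locally path-connected near the points in question, and more importantly since one can show directly that $a$ and $b$ lie in the \emph{same} component $\overline{D}$ (they are connected to $x$ and $y$ by the segments $[x,a]$, $[y,b]$), it suffices to produce \emph{any} arc from $a$ to $b$ in $\overline{\D_\e}$ of controlled diameter and then, if an arc strictly inside $D$ is required, perturb it inward by a tiny amount $\delta\to 0$, possible because $a,b$ are not isolated and $D$ is open with $a,b\in\partial D$ accessible from $D$. To get the controlled diameter, I would use that every point $p$ on $\G(x',y')$ has its inward normal segment of length $\e$ landing in $\overline{\D_\e}$, and that consecutive such segments overlap because $\G(x',y')$ can be covered by finitely many short subarcs whose chords have length $<\e$; stitching these segments with short transversal hops (each of length $\lesssim$ the local chord length) gives a connected polygonal path in $\overline{\D_\e}$ from $a$ to $b$ contained in the $\e$-neighbourhood of $\G(x',y')$, hence of diameter $\le\diam\G(x',y')+2\e$.

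The main obstacle is controlling $\diam\G(x',y')$: nothing in the hypotheses says $\G$ is a quasicircle here (this lemma is used \emph{before} the chordal hypotheses kick in, in the proof of Theorem \ref{betathmLC}), so $\G(x',y')$ --- though it is the smaller-diameter subarc joining $x',y'$ --- could a priori be large compared to $|x'-y'|$, and then compared to $|x-y|$. I expect the resolution is that one does \emph{not} need to follow $\G$ at all: instead, one uses the non-crossing property of nearest-point segments established earlier in this section (``the segments $[x,x'],[y,y']$ do not intersect except perhaps at endpoints'') together with the fact that $x,y$ lie in the \emph{same} component $D$ of the open set $\D_\e$, so they are joined by \emph{some} arc in $D$; the content of the lemma is the quantitative bound $\diam\tau\le 5|x-y|$, and this should come from a local argument inside the ball $B(x, 3|x-y|)$ or so. Thus the real plan is: show the straight segment $[x,y]$, if it lies in $\D_\e$, already works (diameter $|x-y|$); otherwise $[x,y]$ meets $\g_\e$, and at each such crossing point $w$ one has $\dist(w,\G)=\e$ while the endpoints have $\dist(\cdot,\G)>\e$, and one detours around the ``dimple'' of $\G$ responsible for this by going out along $[x,x']$-type segments to distance $\e$ from $\G$ and back --- and crucially the hypothesis $|x-y|\le 2|\e|$ forces any such detour to stay within a bounded multiple of $|x-y|$ of the segment, because a point of $\G$ within distance $<\e$ of $[x,y]$ but with both endpoints at distance $>\e$ must be ``shallow.'' Making this dimple-detour argument precise, and extracting the explicit constant $5$, is where the real work lies; the topological fact that the detour can be taken inside the single component $D$ uses the path-connectedness observations and non-crossing property already recorded above.
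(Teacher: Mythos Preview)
Your proposal contains a genuine gap: none of the three strategies you sketch can be completed as stated, and the key idea of the paper's argument is absent.

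Your first strategy---follow an inner parallel of $\G(x',y')$---you correctly abandon, since without a quasicircle hypothesis $\diam\G(x',y')$ is uncontrolled by $|x'-y'|$. Your fallback ``dimple-detour'' strategy is where the real problem lies. You suggest that wherever $[x,y]$ dips below level $\e$ one detours ``out along $[x,x']$-type segments,'' but nearest-point segments head \emph{toward} $\G$, not around the obstruction, so it is unclear what the detour actually is. More seriously, your heuristic that the offending piece of $\G$ ``must be shallow'' is unjustified: a point $w$ on $[x,y]$ can have $\dist(w,\G)$ arbitrarily close to $0$ (take $w$ near the midpoint when $|x-y|$ is close to $2\e$), and the subarc of $\G$ coming within $\e$ of $[x,y]$ can have arbitrarily large diameter. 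There is no local bound to extract here without further input.

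The paper's argument supplies exactly that input, but from a direction you do not consider. Rather than building a short arc from scratch, it starts with \emph{any} simple polygonal arc $\tau'\subset D$ joining $x$ to $y$ (which exists by connectedness of $D$), arranged to meet $[x,y]$ in finitely many points. Each maximal subarc $\sigma'$ of $\tau'$ with endpoints $a,b\in[x,y]$ encloses, together with $[a,b]$, a Jordan domain $U$; since $[x,y]\cap\G=\emptyset$ (this is where $|x-y|\le 2\e$ is used) and $\sigma'\subset D\subset\Omega$, one has $\overline{U}\subset\Omega$. The crucial claim is then purely topological: $U\setminus B(x,2|x-y|)\subset\D_\e$. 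Indeed, a point $z\in U\cap\g_\e$ outside $B(x,2|x-y|)$ would have its nearest-point segment $[z,z']$ (with $z'\in\G$, $|z-z'|=\e$) exit $U$ through $\partial U=[a,b]\cup\sigma'$; exit through $[a,b]$ contradicts $\dist(x,\G)>\e$, and exit through $\sigma'\subset\D_\e$ contradicts $\dist(\sigma',\G)>\e$. Once this claim is in hand, one replaces each $\sigma'$ by (a polygonal approximation of) the arc $\partial U'\setminus(a,b)$, where $U'$ is the component of $U\cap B(x,2|x-y|)$ bordering $[a,b]$; this arc lies in $D\cap B(x,\tfrac{5}{2}|x-y|)$, yielding the constant $5$. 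The point you are missing is this surgery: use connectedness of $D$ to get \emph{some} arc, then show the enclosed region is already in $\D_\e$ outside a controlled ball, and cut across.
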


\begin{proof}
Suppose first that $\e>0$.
Let $x, y $ be two points in $ D$ with $|x-y|\leq2\e$. So $[x,y]\cap \G = \emptyset$ and the segment $[x,y]$ is contained in the bounded component $\Omega$ of $\R^2\setminus \G$.
Let $\tau'$ be any curve in $D$ that connects $x$ to $y$. After approximating $\tau'$ by a polygonal curve, erasing the loops and making small adjustments near the segment $[x,y]$, we may assume that $\tau'$ is a simple polygonal curve which intersects $[x,y]$ in a finite set. In other words, $\tau'$ is the union of finitely many simple polygonal subarcs $\sigma'$ in $D$, each of which meets $[x,y]$ precisely at its end points. The curve $\tau$ in the proposition will be obtained by replacing
each $\sigma'$ with a polygonal arc $\sigma$ in $D \cap B(x,\frac{5}{2}|x-y|) $ with the same  end points.

Fix such a subarc $\sigma'$ having end points $a, b\in [x,y]$. Assume that $\sigma' \setminus \overline{B}(x,2|x-y|) \neq \emptyset $; otherwise, just let  $\sigma=\sigma'$.
Let $U$ be the domain enclosed by the Jordan curve
$\sigma' \cup [a,b]$. Since $\partial U \cap \G=\emptyset$ and $\Omega$ is simply connected, $\overline U \subset \Omega$.
We claim that
\[ U \setminus {B(x,2|x-y|)} \subset \D_{\e}. \]
Otherwise, take a point  $z\in U \setminus (B(x,2|x-y|) \cup  \D_{\e})$ and assume as we may, by the continuity of the distance function, that $z\in \g_\e$. Let $z'$ be a point on $\G$ for which $|z-z'|=\dist (z,\G)=\e$. Since $\overline U \subset \Omega$, $z' \notin \overline{U}$ and the open segment $(z,z')$ intersects $\partial U$ at some point $z''$. If $z''$ is in $ [a,b]\subset [x,y]$ then
\[
\dist(x,\G)\leq |x-z'| \leq |x-z''|+|z''-z'|= |x-z''|+ \e -|z-z''|<\e,
\]
a contradiction.
If $z''$ is in $\sigma'$ then
$ \e =|z-z'|> |z''-z'|\ge \dist(z'',\G)> \e$, again a contradiction. This proves the claim.

Let $U'$ be the connected component of $U \cap B(x,2|x-y|)$ that contains the segment $[a,b]$ in its boundary. Since $U$ is a polygon, $U'$ is simply connected and $\partial U'$ is a Jordan curve.
In particular, $\partial U'\setminus (a,b)$ is composed of finitely many line segments in $D$ and finitely many subarcs of $S(x,2|x-y|)$. In view of the claim above,
$\partial U' \setminus (a,b)$ is  an arc contained in $D$.
After replacing each maximal circular subarc of  $\partial U' \setminus (a,b)$ by a polygonal arc nearby, we obtain a polygonal arc $\sigma$ in $ D\cap B(x,\frac{5}{2}|x-y|)$ connecting $a$ to $b$. The arc $\tau$ in the proposition is given by the union of these new  $\sigma$'s.

Assume next that $\e<0$. Given two points $x,y\in D$ with $|x-y|\leq 2|\e|$, we define $\tau',\sigma', a, b, U$ as before, and need to replace each subarc $\sigma'$ of $\tau'$ by a new arc $\sigma$ in $ D\cap B(x,\frac{5}{2}|x-y|)$.
Since $\partial U =\sigma'\cup [a,b]\subset \D_\e$ is contained   in the unbounded component of $\R^2\setminus \G$ and $\partial U$ contains some points in $\D_\e$,
we have either $\overline \Omega \cap \overline U  =\emptyset$, or $\overline \Omega \subset U$.

Suppose $\overline \Omega \cap \overline U =\emptyset$. Then
$\overline U \subset \R^2 \setminus \overline {\Omega}$. We first check that $U \setminus B(x,2|x-y|) \subset \D_{\e}$, then choose a replacement  $\sigma$ for $\sigma'$ following the same
steps as in the case $\e>0$.

Suppose  $\overline{\Omega} \subset U$. We set $V = \R^2 \setminus \overline{U}$, then prove $V \setminus B(x,2|x-y|) \subset \D_{\e}$ by replacing $U$ with $V$ in the argument for the case $\e>0$.
Define $V'$ to be the  component of $V \cap B(x,2|x-y|)$ that contains $[a,b]$ in its boundary, and choose $\sigma$ to be a polygonal curve close to $\partial V'\setminus (a,b)$  in $ D\cap B(x,\frac{5}{2}|x-y|)$ and having end points $a, b$.
\end{proof}

We next prove that the boundary of any connected component of $\D_{\e}$ is a Jordan curve. We will need a theorem of Lennes in \cite{Len} which gives a sufficient condition for the frontier, of  a bounded planar domain, to be a Jordan curve. Let $D$ be a bounded domain
and $p$ a closed polygonal curve which encloses $\overline{D}$ in its interior.  Let $E'$  be the set of all points in the plane that can be joined to  $p$ by a continuous curve in the complement of $D$. The \emph{frontier} $F$ of $D$ is the set of all common limit points of $E'$ and $D$, that is, $F=\overline E' \cap \overline D$. Define moreover the \emph{interior set of the frontier} $F$ to be
$I = \R^2 \setminus (E' \cup F)$ and the \emph{exterior set of the frontier} $F$ to be $E = E' \setminus F$. Observe that all the above definitions are independent of the choice of $p$.

Furthermore, a point  $x \in F$ is said to be \emph{externally accessible} if there exists a finite or a continuous infinite polygonal path $\tau \colon [0,1] \to \R^2$ such that $\tau([0,1))\subset E$ and $\tau(1) = x$. And a point  $x \in F$ is said to be \emph{internally accessible} if there exists a finite or a continuous infinite polygonal path $\tau \colon [0,1] \to \R^2$ such that $\tau([0,1))\subset I$ and $\tau(1) = x$. Lennes proved the following.

\begin{lem}[{\cite[Theorem 5.3]{Len}}]
If every point of a frontier $F$ possesses both the internal and the external accessibility, then $F$ is a Jordan curve.
\end{lem}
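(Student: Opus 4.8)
The plan is to deduce from the accessibility hypothesis the full two‑sided topological picture for $F$, to upgrade it to local connectedness, and then to transport a parametrisation of $\mathbb{S}^1$ to $F$ through the Riemann map of one complementary domain. Relying on the development in \cite{Len} that precedes the statement, I would first record that $E$ and $I$ are disjoint, nonempty, simply connected domains, $E$ unbounded and $I$ bounded, with $\R^2 = E \sqcup F \sqcup I$ and $\partial E = \partial I = F$; in particular $F$ is nowhere dense. This is consistent with two‑sided accessibility (an interior point of $F$ could terminate neither an internal nor an external polygonal path), and since $F = \partial I$ is the boundary of a bounded simply connected planar domain and is not a single point — otherwise $\R^2 \setminus F$ would be connected, contradicting the presence of both $E$ and $I$ — the set $F$ is a nondegenerate continuum.

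\emph{Local connectedness of $F$.} This is the crux, and the step I expect to be the main obstacle; here two‑sided accessibility is genuinely needed. Suppose $F$ is not locally connected. By the structure theorem for non‑locally‑connected continua there are a nondegenerate subcontinuum $L \subset F$ and nondegenerate subcontinua $L_k \subset F$ with $L_k \to L$ in the Hausdorff metric, $L_k \cap L = \emptyset$, and with $L_k$ separated from $L$ inside $F$ in the sense of quasicomponents of $F \cap N$ for a fixed neighbourhood $N$ of $L$. Choose $p \in L$ and $p_k \in L_k$ with $p_k \to p$. By internal accessibility, fix a polygonal arc $\a$ from a point of $I$ to $p$ with $\a \setminus \{p\} \subset I$; by external accessibility together with the connectedness of $E$, fix polygonal arcs $\beta$ and $\beta_k$ from a common point $b \in E$ to $p$ and to $p_k$ respectively, with interiors in $E$. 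For large $k$, $\beta$ and $\beta_k$ land close to one another on $L \cup L_k$, so joining them along a subarc of $F$ from $p$ to $p_k$ yields a closed curve lying in $E \cup F$ whose bounded complementary region contains points of $I$ arbitrarily close to $L$; the arc $\a$, which ends at $p \in L$, then has to cross $\beta$ or some $\beta_k$, impossible since those lie in $E$. Making ``close to $L$'' precise and selecting the connecting subarc of $F$ rigorously — exactly where the separated‑quasicomponent structure enters — is the delicate part; once it is in place, $F$ is locally connected.

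\emph{From local connectedness to a Jordan curve.} Let $\psi \colon \mathbb{B}^2 \to I$ be a Riemann map. Since $\partial I = F$ is a locally connected continuum, Carath\'eodory's theorem extends $\psi$ to a continuous map $\overline\psi \colon \overline{\mathbb{B}^2} \to \overline I$, and $\overline\psi|_{\mathbb{S}^1}$ is a continuous surjection onto $F$; thus it suffices to show $\overline\psi|_{\mathbb{S}^1}$ is injective, for then it is a homeomorphism $\mathbb{S}^1 \to F$. Suppose instead $\overline\psi(\z_1) = \overline\psi(\z_2) = x$ with $\z_1 \neq \z_2$, and join $\z_1$ to $\z_2$ by a chord $[\z_1,\z_2]$ of the disk. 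Then $J := \overline\psi([\z_1,\z_2]) = \psi((\z_1,\z_2)) \cup \{x\}$ is a Jordan curve with $J \cap F = \{x\}$ and $J \setminus \{x\} \subset I$. As $E$ is connected, unbounded and disjoint from $J$, it lies in the unbounded complementary domain of $J$, so the bounded complementary domain $V$ of $J$ misses $E$, hence also misses $F$ (a point of $F \cap V$ would have a neighbourhood in $V$, disjoint from $E$, against $F = \partial E$); thus $V \subset I \setminus J$. Since $I \setminus J$ has exactly the two components $\psi(P_1)$ and $\psi(P_2)$, the images of the two open half‑disks $P_1, P_2$ into which $[\z_1,\z_2]$ divides $\mathbb{B}^2$, connectedness forces $V = \psi(P_1)$, say. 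Then for the nondegenerate boundary arc $A_1 \subset \mathbb{S}^1$ of $P_1$ we get $\overline\psi(A_1) \subset \overline{\psi(P_1)} = \overline V$, and as $\overline\psi(A_1) \subset F$ while $\overline V \cap F = \partial V \cap F = J \cap F = \{x\}$, it follows that $\overline\psi$ is constant on an arc of $\mathbb{S}^1$. This contradicts the uniqueness theorem of F.\ and M.\ Riesz for radial limits of a nonconstant bounded holomorphic function. Hence $\overline\psi|_{\mathbb{S}^1}$ is injective, and $F$ is a Jordan curve.
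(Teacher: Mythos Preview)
The paper does not prove this lemma at all: it is quoted verbatim as Theorem~5.3 of Lennes~\cite{Len} and used as a black box in the proof of Lemma~\ref{appr}. So there is no ``paper's own proof'' to compare your proposal against; Lennes' original argument (1911) is purely point--set topological and predates the Carath\'eodory boundary--correspondence theorem you invoke.

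On the merits of your sketch: the second half (Carath\'eodory extension plus the F.\ and M.\ Riesz argument to rule out identification of boundary points) is correct and standard, provided you already know that $F=\partial I$ is locally connected and that $I$ is a bounded simply connected domain with $\partial I=\partial E=F$. The first half, however, has a real gap at exactly the place you flag. Your contradiction for local connectedness hinges on ``joining them along a subarc of $F$ from $p$ to $p_k$'' to build a Jordan curve in $E\cup F$; but $F$ is only known to be a continuum, not an arcwise connected one---arcwise connectedness of a planar continuum is essentially equivalent (via Hahn--Mazurkiewicz) to the local connectedness you are trying to establish---so you cannot, at this stage, select such a subarc, let alone a short one near $L$. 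The heuristic that $\beta$ and $\beta_k$ together with something in $F$ separate part of $I$ from the rest is the right picture, but making it rigorous requires a genuinely different device (for instance, working with crosscuts and prime ends on both sides simultaneously, or reproducing Lennes' direct topological argument), not a subarc of $F$. As written, the local--connectedness step does not go through, and since that step is the crux, the proof is incomplete.
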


We now apply the theorem of Lennes to prove the following.

\begin{lem}\label{appr}
Let $\G$ be a Jordan curve and $\e \neq 0$. Then, the boundary of every  connected component of $\D_{\e}$ is a Jordan curve.
\end{lem}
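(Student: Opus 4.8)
The plan is to verify the hypotheses of Lennes' theorem for the frontier $F$ of a fixed connected component $D$ of $\D_\e$: we must show that every point of $F$ is both internally and externally accessible, and that $F$ coincides with the topological boundary $\partial D$. Fix a large closed polygonal curve $p$ enclosing $\overline{\D_\e}$, and let $E'$, $F$, $I$, $E$ be as defined. Since $D$ is open and connected, internal accessibility is the easy direction: for $x\in F$, pick any point $a\in D$; because $D$ is an open connected subset of $\R^2$ it is polygonally connected, and one can run a polygonal path inside $D$ from $a$ toward $x$; the delicate but standard point is that this path can be chosen to terminate \emph{at} $x$ with its open initial segment staying in $I$ — here I would use that near $x$ the set $D$ looks like it has no inward cusp, which is exactly the content of Lemma \ref{curvesincomp} (given two nearby points of $D$ they are joined by a polygonal arc of controlled diameter inside $D$), so $D$ is locally connected at each of its boundary points and a terminating internal polygonal path exists; one also checks $I$ is the union of the components of $\R^2\setminus(E'\cup D)$ that are "surrounded" by $D$, and that $D\subset I\cup$(nothing), i.e.\ $D$ itself is open in $I\cup D$, so paths in $D$ lie in $I$ up to the endpoint.

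External accessibility is where the geometry of $\D_\e$ enters. Take $x\in F=\partial D$. I want a polygonal path from $p$ to $x$ whose open part misses $\overline D$. The key input is the path-connectedness statements proved at the start of Section \ref{geometry}: for $\e>0$, the set $\overline\Omega\setminus\D_\e$ is path-connected (via $[u,u']\cup\G(u',v')\cup[v,v']$), and $\R^2\setminus(\D_\e\cup\D_{-\e})$ is path-connected; similarly for $\e<0$. The complement of $D$ contains in particular $\R^2\setminus\D_\e$ (since $D\subset\D_\e$) together with the other components of $\D_\e$; I would argue that $x$, being on the boundary of $D$, is a limit of points not in $\D_\e$ — for $\e>0$, $x$ has a closest point $x'\in\G$ with $|x-x'|=\e$, and moving from $x$ slightly along $[x,x']$ (or slightly outward, away from $x'$) lands in $\overline\Omega\setminus\D_\e$ — hence $x$ can be reached by a short segment from the connected "outside" set, which in turn connects to $p$. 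One must be careful that this approaching segment, except for its endpoint $x$, avoids all of $\overline D$ and not merely all of $D$; this is handled by choosing the direction of approach transverse to $D$ using again the no-inward-cusp property and the fact (also noted in Section \ref{geometry}) that the segments $[u,u']$ realizing the distance to $\G$ do not cross.

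So the main obstacle, and the step deserving the most care, is the \emph{external} accessibility: producing for each boundary point $x$ of $D$ a polygonal path in $E$ terminating at $x$, and correctly identifying $E$ (equivalently, showing $\R^2\setminus(\D_\e\cup\D_{-\e})$, or $\overline\Omega\setminus\D_\e$ for $\e>0$, lies in $E'$ and that $x$ is a limit point of it). The non-crossing property of the distance-realizing segments and the absence of inward cusps (Lemma \ref{curvesincomp}) together guarantee that for $x\in\partial D$ the whole open segment from $x$ to its foot $x'$ on $\G$, or a short outward segment at $x$, lies outside $\overline D$, giving the required access curve after concatenating with a polygonal arc inside the connected exterior set out to $p$. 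Once both accessibilities are established for every point of $F$ — and $F=\partial D$ because points of $\partial D$ are neither in $E'$ (they are limits of $D$) nor interior to $D$ — Lennes' lemma immediately yields that $F=\partial D$ is a Jordan curve, completing the proof.
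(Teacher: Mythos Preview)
Your framework is the paper's---Lennes' theorem applied to a component $D$ of $\D_\e$---but you have the two accessibility directions swapped in difficulty, and your identification of $E'$, $I$, $F$ contains an error.

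In the paper, the first and organizing step is to show $E'=\R^2\setminus D$ outright: for \emph{any} $x\in\Omega\setminus D$ (not just boundary points), the segment $[x,x']$ to a nearest point $x'\in\G$ lies entirely outside $D$, because $x'$ remains a nearest point to every point of $[x,x']$, so $\dist(\cdot,\G)$ is monotone along the segment and it cannot re-enter $\D_\e$ through a different component; from $x'$ one reaches $p$ via $\R^2\setminus\Omega$. This immediately yields $F=\partial D$, $I=D$, and external accessibility of every $x\in\partial D$, since the open segment $(x,x')$ has $\dist(\cdot,\G)<\e$ strictly and hence lies outside $\overline{\D_\e}\supset\overline D$. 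No appeal to non-crossing of distance-realizing segments or to Lemma~\ref{curvesincomp} is needed for this direction; your worry about avoiding $\overline D$ rather than $D$ is resolved by this strict inequality. Also, your stated reason for $F=\partial D$ (that ``points of $\partial D$ are not in $E'$ since they are limits of $D$'') is incorrect: once $E'=\R^2\setminus D$, in fact $\partial D\subset E'$.

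\emph{Internal} accessibility is where Lemma~\ref{curvesincomp} is genuinely needed, and your sketch there is right but should be made explicit: for $x\in\partial D$ pick $x_n\in D$ with $|x_n-x|<2^{-n}\e$, join $x_n$ to $x_{n+1}$ by a polygonal arc $\tau_n\subset D=I$ of diameter at most $5|x_n-x_{n+1}|$ (this is precisely Lemma~\ref{curvesincomp}), and take the continuous infinite polygonal path $\{x\}\cup\bigcup_{n\ge 1}\tau_n$.
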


\begin{proof}
We prove the claim for $\e>0$ only. The proof for the case $\e<0$ is essentially the same. Recall that $\Omega$ is the bounded component of  $\R^2\setminus\G$. Let $D$ be a connected component of $\D_\e$, and $p$ be a closed polygonal curve that encloses $\overline{\Omega}$ in its interior. Every point $x\in \Omega \setminus D$ can be joined to one of its closest points on $\G$ by a line segment  entirely outside $D$, then to $p$ by a curve in $\R^2\setminus \Omega$. Therefore,  $E'= \R^2 \setminus D$, $F= \overline E' \cap \overline D= \partial D$ and $I=D$,   and  any point in $\partial D$ is externally accessible.

To check the internal accessibility, we take $x \in \partial D$, and a sequence $\{x_n\} $ in $D$ with distance $|x_n - x|<2^{-n}\e$ for every $n \ge 1$. By Lemma \ref{curvesincomp}, there exist a family of polygonal arcs $\{\tau_n\}_{n\in \N}$ in $D$ such that $\tau_n$ joins  $x_n$ to $x_{n+1}$ and has
$\diam{\tau_n} \leq 5|x_n - x_{n+1}| \leq 2^{1-n}\e$.
Then, take $\tau$ to be the infinite polygonal path $\{x\} \cup \bigcup_{n\ge 1}\tau_n$. This proves that $x$ is internally accessible, and by Lennes' theorem, we conclude that $\partial D$ is a Jordan curve.
\end{proof}

Components of $\D_\e$ satisfy one of the two conditions for the linearly local connectedness \cite{Gehring-ICM}, when $\G$ is a quasicircle.

\begin{lem}\label{quasiconvex}
Suppose that $\G$ is a $K$-quasicircle. Then, there exists a constant $M>0$ depending only on $K$ such that for any $\e\neq 0$, for any connected component $D$ of $\D_{\e}$, and for any two points $x,y \in D$, there exists a curve $\tau$ in $ D$ joining $x$ and $ y$ such that $\diam{\tau} \leq M|x-y|$.
\end{lem}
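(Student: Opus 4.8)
The plan is to reduce to the local statement of Lemma~\ref{curvesincomp}, which already gives good polygonal arcs whenever $|x-y|\le 2|\e|$, and to handle the complementary range $|x-y|>2|\e|$ by exploiting the quasicircle property of $\G$, the non-crossing property of the nearest-point segments, and the path-connectedness facts about $\overline{\Omega}\setminus\D_\e$, $(\R^2\setminus\Omega)\setminus\D_{-\e}$ recorded in Section~\ref{geometry}. Concretely, fix $x,y\in D$. If $|x-y|\le 2|\e|$, we are done by Lemma~\ref{curvesincomp} with $M=5$. So assume $|x-y|>2|\e|$; the goal is to produce a curve in $D$ of diameter $\lesssim_K |x-y|$.

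First I would treat the case $\e>0$, so $D\subset\Omega$. Pick nearest points $x',y'\in\G$ with $|x-x'|=|y-y'|=\e$. Because $\G$ is a $K$-quasicircle, $\diam\G(x',y')\le C|x'-y'|$, and since $|x'-y'|\le|x-y|+2\e< 2|x-y|$, the subarc $\G(x',y')$ has diameter $\lesssim_K|x-y|$. Now I would build a candidate curve by ``pushing $\G(x',y')$ inward by $\e$'': along $\G(x',y')$ the points at signed distance exactly $\e$ need not form a nice set, but one can instead travel inside $\Omega$ at bounded distance from $\G(x',y')$. The clean way is to use the tubular region $N=\{w\in\Omega:\dist(w,\G(x',y'))<\e\}\setminus\overline{\D_{2\e}}$ or, more simply, to invoke the linearly-local-connectedness that $K$-quasicircles enjoy: the complementary domain $\Omega$ of a $K$-quasicircle is $M_0$-linearly locally connected for some $M_0=M_0(K)$, so there is a curve $\sigma\subset\Omega$ joining $x$ to $y$ with $\diam\sigma\le M_0|x-y|$. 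This $\sigma$ lies in $\Omega$ but perhaps not in $D=\D_\e$. To repair it, I would run the same surgery as in the proof of Lemma~\ref{curvesincomp}: approximate $\sigma$ by a polygon meeting $\g_\e$ finitely often, and over each maximal subarc of $\sigma$ lying outside $D$ replace it by an arc of $\partial D$, using that each component of $\D_\e$ has boundary a Jordan curve (Lemma~\ref{appr}) and that $\D_\e$ has no inward cusps (Lemma~\ref{curvesincomp}). The key quantitative point is that each replacement arc of $\partial D$ stays within a controlled neighborhood: any subarc of $\g_\e$ between two points $a,b\in\sigma$ at distance $\le|a-b|$ of $\sigma$ is trapped, by the non-crossing property of nearest-point segments together with Lemma~\ref{orientation}/Lemma~\ref{circgamma}, in a region of diameter $\lesssim_K|a-b|\le\diam\sigma$. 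Summing over the finitely many pieces keeps the total diameter $\lesssim_K|x-y|$.

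For $\e<0$ the argument is the mirror image: $D$ is a component of $\{\dist(\cdot,\G)>|\e|\}$ inside $\R^2\setminus\overline\Omega$, and now one uses linear local connectedness of the \emph{exterior} domain of the $K$-quasicircle $\G$ (again a $K$-quantitative consequence of the two-point condition), together with the analogous path-connectedness of $(\R^2\setminus\Omega)\setminus\D_{-\e}$, and the same boundary surgery using Lemma~\ref{appr} and the no-cusp property. In both cases the final constant $M$ depends only on $K$, as required.

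The main obstacle I anticipate is the surgery step — controlling the diameter of the $\partial D$-arcs that one splices in to push a curve from $\Omega$ (resp.\ the exterior) into $D$. The subtlety is that $\partial D$ can wander far from $\G$ near branch points of the level set (Figure~\ref{fig:figure2}), so one cannot naively say ``$\partial D$ is close to $\G$''. The resolution is to compare, for each offending subarc, the relevant piece of $\partial D$ with a nearest-point segment and a controlled subarc of $\G$, using the non-crossing property to confine it; making this confinement quantitative, uniformly in $\e$, with a constant depending only on $K$, is the technical heart of the proof. A secondary point requiring care is ensuring the spliced curve stays genuinely inside the \emph{same} component $D$ and does not jump to another component of $\D_\e$ — this is exactly what the Jordan-curve structure of $\partial D$ from Lemma~\ref{appr} is for.
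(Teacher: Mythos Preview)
Your surgery step has a genuine gap. Starting from a curve $\sigma\subset\Omega$ of diameter $\lesssim_K|x-y|$ and replacing its portions outside $D$ by arcs of $\partial D$ requires a diameter bound on those boundary arcs --- but a bound of the form $\diam(\text{arc of }\partial D\text{ between }a,b)\lesssim|a-b|$ is essentially a $2$-point condition for $\partial D$, which is not available as an input here. The lemmas you invoke (Lemma~\ref{orientation} and Lemma~\ref{circgamma}) both \emph{assume} that $\g_\e$ is a Jordan curve, which is not hypothesized in the present lemma, and even when they apply they give topological information about $\g_\e^{\L}$, not diameter control on subarcs of $\partial D$. The non-crossing property of nearest-point segments likewise constrains how two radial segments interact, not how far $\partial D$ can wander between two fixed endpoints. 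So the resolution you sketch in your last paragraph names the difficulty without supplying the missing estimate.

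The paper avoids this by running the surgery in the opposite direction. It starts with an \emph{arbitrary} polygonal curve $\tau'\subset D$ joining $x$ to $y$ (no diameter control at all), cuts it against the straight segment $[x,y]$, and for each subarc $\sigma'$ of $\tau'$ with endpoints $a,b\in[x,y]$ considers the Jordan domain $U$ enclosed by $\sigma'\cup[a,b]$. The quantitative heart is the containment
\[
U\setminus B\bigl(x,(C+2)|x-y|\bigr)\subset\D_\e,
\]
proved directly from Ahlfors' $2$-point condition for $\G$: if some $z\in U$ far from $x$ had $\dist(z,\G)=\e$, one locates its nearest point $z'\in\G\cap U$, observes that $\G$ must cross $[a,b]$ (since $\sigma'\subset D$), and applies the $2$-point condition to the two crossing points to force $\G\subset U\cup B(x,(C+1)|x-y|)$, contradicting $\sigma'\subset\Omega$. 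Once this containment holds, $\sigma'$ is replaced by the arc of $\partial\bigl(U\cap B(x,(C+2)|x-y|)\bigr)$ from $a$ to $b$, which lies in $D$ and has the required diameter bound by construction. For $\e<0$ one splits according to whether the containment holds; when it fails, the same computation yields $\G\subset U\cup B(x,(C+1)|x-y|)$ and one trims along the complementary region instead. The point is that the $2$-point condition is applied to $\G$, never to $\partial D$.
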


\begin{proof}

 In view of Lemma \ref{curvesincomp}, we consider points
$x$ and $y$ in $D$ with distance $|x-y|>2|\e|$ only. The proof follows closely that of Lemma \ref{curvesincomp}; however, the segment $[x,y]$ here may intersect $\G$.

Assume first that $\e>0$.  Since $\G$ is a $K$-quasicircle, it satisfies condition (\ref{3pts}) for some $C=C(K)>1$.
Fix  a simple polygonal curve $\tau'$ in $D$ joining $x$ and $y$ that intersects $[x,y]$ in a finite set. As in Lemma \ref{curvesincomp}, we will replace each
subarc $\sigma'$ of $\tau'$ that has end points in $[x,y]$ and does not intersect $[x,y]$ anywhere else, by a new
arc $\sigma$ in $ D \cap B(x,(C+2)|x-y|) $ having the same end points.

Fix such a subarc $\sigma'$ having end points $a, b \in [x,y]$. Assume that $\sigma' \setminus \overline{B}(x,(C+2)|x-y|)\neq \emptyset$; otherwise, just set $\sigma=\sigma'$.
The domain $U$ enclosed by the Jordan curve
$\sigma' \cup [a,b]$ may now contain points outside $\Omega$.
We claim nevertheless that
\begin{equation}\label{U}
U \setminus {B(x,(C+2)|x-y|)} \subset \D_{\e}.
\end{equation}
Suppose $U \setminus {B(x,(C+2)|x-y|)} \nsubseteq \D_{\e}$. As before, we may pick a point  $z\in (U \setminus {B(x,(C+2)|x-y|)}) \cap \g_{\e}$ and a point $z'\in \G$ such that  $|z-z'|=\dist (z,\G)=\e$. Suppose $z'\notin U$;  the segment $[z,z']$ must intersect $\partial U$ at some point $z''$. Because $|x-y|>2|\e|$,  the point $z''$ cannot be in $ [x,y]$, therefore $z'' \in \sigma'$. Hence
$ \e =|z-z'|> |z''-z'|\ge \dist(z'',\G)> \e$, a contradiction. So  $z'$ must be in $U$, therefore $z'\in (U\cap \G) \setminus B(x,(C+1)|x-y|)$.

Since $\e>0$, $\G$ cannot be entirely in $U$, so $\G\cap \partial U \neq \emptyset$. Since $\partial U=\sigma' \cup [a,b]$ and $\sigma' \subset D$, $\G \cap [a,b]\neq \emptyset$.
Let $z_1,z_2$ be the points in $[a,b]\cap \G$ with the property that the open subarc $\G'$ of $\G$ connecting $z_1$ to $z_2$ and containing the point $z'$, is  entirely in $U$. So $|z_1-z_2| < |a-b|\le |x-y|$ and
\[
\diam{\G'} \geq \dist(z',[z_1,z_2]) \geq |z'-x| -|x-y| \geq C|x-y| > C|z_1-z_2|.
\]
From the  2-point condition \eqref{3pts} it follows that the diameter of the subarc $\G''= \G \setminus \G'$ is at most $ C|z_1-z_2|$. Therefore, $\G'' \subset B(x, (C+1)|x-y|)$, and $\G \subset U \cup  B(x, (C+1)|x-y|)$.

Let $w$ be one of the points on $\sigma'$ that is furthest from $x$.
Since $\G'\setminus \{z_1,z_2\}$ is contained in the open set $U$, $|x-w|> \max_{u \in\G'} |x-u|;$
furthermore
$|x-w| \geq (C+2)|x-y|> \max_{u \in\G''} |x-u|.$
As a consequence,  $w$, also $\sigma'$, is contained in the unbounded component of $\R^2\setminus \G$. This is impossible because $\sigma'\subset D \subset \D_\e \subset \Omega$. Claim (\ref{U}) is proved.

Let $U'$ be the component of  $U \cap B(x,(C+2)|x-y|)$ whose boundary contains $[a,b]$. As in Lemma \ref{curvesincomp}, $\sigma'$ will be replaced by the subarc
$\sigma = \partial U' \setminus (a,b) \subset D\subset \D_{\e}$.
The curve $\tau$ in the proposition is the union of these new  $\sigma$'s.
\medskip

Now suppose $\e<0$. Given $x, y\in D$ with  $|x-y|>2|\e|$,  we define $\tau'$, $\sigma', a, b$, $U$ as before, and need to replace each subarc $\sigma'$ of $\tau'$ by a new arc $\sigma$ in $D\cap B(x,(C+2)|x-y|)$.  When $\e$ is negative, depending on the pair $x,y$, either possibility $U \setminus {B(x,(C+2)|x-y|)} \subset \D_{\e}$ or $U \setminus {B(x,(C+2)|x-y|)} \nsubseteq \D_{\e}$ may actually occur.

Suppose $U \setminus {B(x,(C+2)|x-y|)} \subset \D_{\e}$. Choose $U'$ as in the case $\e>0$ and replace $\sigma'$ by $\sigma=\partial U'\setminus (a,b)$.

Suppose  $U \setminus {B(x,(C+2)|x-y|)} \nsubseteq \D_{\e}$. We follow the argument for claim \eqref{U}, to conclude that $\G\subset U\cup B(x,(C+1)|x-y|)$, therefore $\overline \Omega \subset U \cup B(x,(C+1)|x-y|)$.
Let $U''$ be the component of $B(x,(C+2)|x-y|) \setminus \overline{U}$ that contains $[a,b]$ in its boundary. Note that  $\sigma = \partial U'' \setminus (a,b)$ is a Jordan arc
composed of a finite number of line segments belonging to $\sigma'$ and a finite number of subarcs of $S(x,(C+2)|x-y|)\setminus U$ and that
 $\sigma \subset B(x,(C+2)|x-y|)$.

 It remains to check that
\[S(x,(C+2)|x-y|)\setminus \overline U\subset \D_\e.
\]
Take $z\in S(x,(C+2)|x-y|)\setminus \overline U$ and $z'\in \G$ so that $|z-z'|=\dist(z,\G)$. Write $\G=(\G \cap U) \cup (\G \setminus  U) $.
If $z'\in \G \cap U$, then the segment $(z,z')$ intersects $\partial U=[a,b]\cup \sigma'$ at some point, say $z''$. Since $\dist(z,[a,b]) > |\e|$, the point $z''$ is not in $[a,b]$, hence in  $\sigma'\subset \D_\e$. Therefore,
\[\dist(z,\G\cap U)\geq \dist(z,\G)=|z-z'|=|z-z''|+|z''-z'|\geq \dist(z'',\G)>|\e|.\]
Since $\G \setminus  U\subset B(x, (C+1)|x-y|$, we also have $\dist(z,\G \setminus U)\geq |x-y|> |\e|$.
So $\dist(z,\G)>|\e|$.
This proves the claim and the lemma.
\end{proof}

\begin{rem}\label{remappr}
Both Lemmas \ref{curvesincomp} and \ref{quasiconvex} can be strengthened to include the case when $x$ and $y$ are in $\overline D$. In such case, curves $\tau$ satisfying  the diameter estimates in the lemmas are contained in $D$ with the exception of their endpoints.
\end{rem}

We now state an elementary geometric fact needed in the following two lemmas.
Given $0<\d <\e$ and a point $a=\d e^{i\a}$ in $B(0,\e)$, then
\[S(a,\e)\setminus B(0,\e)= \{a+\e e^{i\theta}\colon |\theta-\a|\leq \pi- \cos^{-1}(\frac{\d}{2\e}) \},
\]
and the circular arc is contained in the sectorial region $\{z\in \R^2 \colon |\arg z -\a| \leq
\cos^{-1}(\frac{\d}{2\e})\}$.
\medskip

Given any $x_0\in \g_\e$, $\e\neq 0$, set
\[\G^{\{x_0\}} = \{ y \in \G \colon |x_0 - y| = |\e| \}.\]

\begin{lem}\label{X}
Suppose $\e \neq 0$ and $x_0$ is a non-isolated point in $\g_\e$. Then the set
$\G^{\{x_0\}}$
lies entirely in a semi-circular subarc of $S(x_0,|\e|)$.
\end{lem}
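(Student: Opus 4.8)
The plan is to argue by contradiction. Suppose $\G^{\{x_0\}}$ is not contained in any semi-circular subarc of $S(x_0,|\e|)$. Then we can find two points $z_1, z_2 \in \G^{\{x_0\}}$ that are \emph{antipodal or nearly antipodal} on $S(x_0,|\e|)$, in the sense that the line segment $[z_1,z_2]$ passes through $x_0$ or comes close enough to $x_0$ that the open segment $(z_1,z_2)$ meets $B(x_0,|\e|)$ at points of distance less than $|\e|$ from $x_0$. The elementary geometric fact stated just before the lemma is exactly what makes this precise: if $\G^{\{x_0\}}$ met $S(x_0,|\e|)$ in two points $z_1 = x_0 + |\e|e^{i\alpha_1}$, $z_2 = x_0 + |\e|e^{i\alpha_2}$ with angular separation exceeding $\pi$, then the midpoint $m$ of $[z_1,z_2]$ lies in $B(x_0,|\e|)$, so $x_0$ is a small distance $\d>0$ from the chord, and the circular-arc description shows the relevant geometry.

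The contradiction I would extract is with the defining property of $\g_\e$, namely that every point of $\g_\e$ has distance exactly $|\e|$ from $\G$ (and $x_0$ in particular realizes this distance at $z_1$ and at $z_2$). First I would treat the case $\e>0$: here $x_0\in\Omega$ and $\dist(x_0,\G)=\e$, so the open disk $B(x_0,\e)$ is contained in $\Omega$ and is disjoint from $\G$. If $z_1,z_2\in\G$ are (nearly) antipodal on $S(x_0,\e)$, then I would consider the short circular arc $\alpha$ of $S(x_0,\e)$ joining $z_1$ to $z_2$ together with the chord $[z_1,z_2]$; since the chord passes through or near $x_0$, the region bounded by $\alpha\cup[z_1,z_2]$ that does \emph{not} contain $x_0$ — together with the fact that $z_1,z_2$ are the only points of $\G$ on $\overline{B}(x_0,\e)$ and $\G$ is connected — forces $\G$ to be separated in a way incompatible with being a Jordan curve surrounding $\Omega\supset B(x_0,\e)$. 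More directly: since $x_0$ is a non-isolated point of $\g_\e$, there are points $x$ of $\g_\e$ arbitrarily close to $x_0$; for such $x$, $\dist(x,\G)=\e$ as well, but if $\G^{\{x_0\}}$ spans more than a semicircle one checks (using that $\G$ must pass through both $z_1$ and $z_2$ and stay outside $B(x_0,\e)$) that points $x$ near $x_0$ on the "deficient" side are closer than $\e$ to the arc of $\G$ near $z_1$ or $z_2$, contradicting $\dist(x,\G)=\e$.

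The case $\e<0$ is symmetric, with $x_0$ in the unbounded component of $\R^2\setminus\G$ and $\overline{\Omega}$ playing the role of the obstacle: $B(x_0,|\e|)$ is disjoint from $\overline\Omega$, $\G\cap\overline{B}(x_0,|\e|)\subset S(x_0,|\e|)$, and the same near-antipodal configuration of $\G^{\{x_0\}}$ would let $\G$ (hence $\overline\Omega$) be trapped on the wrong side, or would produce nearby points of $\g_\e$ of distance $<|\e|$ from $\G$. I expect the main obstacle to be the bookkeeping in the borderline situation where $z_1$ and $z_2$ are \emph{exactly} antipodal (so the chord passes through $x_0$ itself): here one must use non-isolatedness of $x_0$ to perturb to a genuinely nearby point $x\in\g_\e$ and quantify that $\dist(x,\G)<|\e|$, which requires knowing that $\G$ locally near $z_1$ (or $z_2$) stays in the closed exterior of $B(x_0,|\e|)$ and is tangent to $S(x_0,|\e|)$ there — a consequence of $z_1$ being a nearest point of $\G$ to $x_0$. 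Making this tangency/curvature comparison rigorous, rather than the topological separation argument, is the delicate point; I would lean on the topological argument (connectedness of $\G$, Jordan curve theorem applied to $\G$ enclosing $B(x_0,|\e|)$) to avoid it where possible.
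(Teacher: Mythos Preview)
Your proposal contains a genuine gap at the very first reduction step. You claim that if $\G^{\{x_0\}}$ is not contained in any closed semicircular subarc of $S(x_0,|\e|)$, then one can find two points $z_1,z_2\in\G^{\{x_0\}}$ that are antipodal or nearly antipodal, so that the chord $[z_1,z_2]$ passes through or near $x_0$. This is false: consider $\G^{\{x_0\}}$ consisting of three points on $S(x_0,|\e|)$ at angular positions $0$, $2\pi/3$, $4\pi/3$. This set lies in no semicircle, yet every pair of its points is separated by angle $2\pi/3<\pi$, and no chord between two of them comes close to $x_0$. The correct consequence of ``not contained in a semicircle'' is that $x_0$ lies in the interior of the convex hull of $\G^{\{x_0\}}$, which in general requires three points to witness. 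Your subsequent topological separation argument, which relies on a single chord near $x_0$ and on the unjustified assertion that $z_1,z_2$ are ``the only points of $\G$ on $\overline{B}(x_0,|\e|)$'', does not adapt to this situation.

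The paper's proof is far shorter and avoids contradiction altogether. It uses non-isolatedness in the most direct way: pick any sequence $a_n\in\g_\e$ with $a_n\to x_0$ and $0<\d_n=|a_n-x_0|<|\e|$. Since $\dist(a_n,\G)=|\e|$, the open ball $B(a_n,|\e|)$ contains no point of $\G$, hence none of $\G^{\{x_0\}}$. Therefore $\G^{\{x_0\}}\subset S(x_0,|\e|)\setminus B(a_n,|\e|)$, which by the elementary geometric fact stated just before the lemma is a single arc of length $(2\pi-2\cos^{-1}(\d_n/2|\e|))\,|\e|\to\pi|\e|$. Hence the minimal arc containing $\G^{\{x_0\}}$ has length at most $\pi|\e|$. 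Your ``more directly'' paragraph gestures toward this, but inverts the logic --- you look for special nearby $x$ on a ``deficient side'' and try to show $\dist(x,\G)<|\e|$, instead of taking an \emph{arbitrary} nearby $a_n\in\g_\e$ and reading off the constraint on $\G^{\{x_0\}}$ --- and then loads it with tangency and curvature considerations that play no role. No comparison with ``arcs of $\G$ near $z_1$'' is needed; the points of $\G^{\{x_0\}}$ are themselves in $\G$, and the single inequality $\dist(a_n,\G^{\{x_0\}})\ge|\e|$ is all that is used.
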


\begin{proof}
Suppose that $\e>0$.  Choose a sequence of points $a_n$ on $\g_\e$ that converges to $x_0$; set $\d_n=|a_n-x_0|$ and assume as we may that  $0<\d_n <\e$ . Since $\dist(a_n,\G)=\e$, we have $\dist(a_n, \G^{\{x_0\}})\geq \e$, for all $n\geq 1$. In particular $\G^{\{x_0\}}$ is contained in the part of  $S(x_0,\e)$ that is outside $B(a_n,\e)$, which is a circular arc of  length  $(2\pi- 2\cos^{-1}(\frac{\d_n}{2\e}))\,\e$.
The claim follows by letting $n\to \infty$. The proof for the case $\e<0$ is the same.
\end{proof}

Fix a non-isolated point $x_0$ on $\g_\e$, we examine the geometry of the level set $\g_\e$ near $x_0$.

Let $X = \G^{\{x_0\}} $.
Since $X$ is compact, there exist $x_1,x_2 \in X$ (possibly $x_1=x_2$) such that $|x_1-x_2| = \diam{X}\leq 2 |\e|$; and by Lemma \ref{X}, $X$ lies in a subarc $\Sigma$ (possibly degenerated) of $S(x_0,|\e|)$ having endpoints $x_1$ and $x_2$ and of length at most $\pi |\e|$.
Let $U = B(x_1,|\e|) \cup B(x_2,|\e|)$;  clearly $\g_\e \cap U =\emptyset $.
Set $\e_0=(\e^2-|x_1-x_2|^2/4)^{1/2}$. For  $0< \d<\e_0$, the set $S(0,\d) \setminus U$ is a connected arc when $|x_1-x_2|< 2| \e|$, and it has  two components when $|x_1-x_2| = 2| \e|$. Let  $S_\d$ be a component of $S(0,\d) \setminus U$.

\begin{lem}\label{4points}
Suppose $\e \neq 0$ and $x_0$ is a non-isolated point in $\g_\e$. There exists $\d_0 \in (0,\e_0)$ such that
if $0<\d<\d_0$ then the set $\g_\e \cap S_\d$ contains at most two points.

Specifically, if $0<\d<\d_0$, $a\in \g_\e \cap S_\d$, and $a'$ is a point in $\G$ with $|a-a'|=\e$, then at least one of the two components (maybe empty) $S_{\d,a}^1,S_{\d,a}^2$ of $S_\d \setminus \{a\}$ is contained entirely in the disk $B(a',\e)$; in other words,
there exists $j \in \{1,2\}$ such that every point in $S_{\d,a}^j$ has distance strictly less than $\e$ from $\G$.

\end{lem}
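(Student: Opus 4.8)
The plan is to exploit the circular-arc geometry already established: near $x_0$ the level set $\g_\e$ must dodge the union of disks around the points of $X=\G^{\{x_0\}}$, and the key geometric input is that any closest point $a'$ of $\G$ to a point $a\in\g_\e$ very near $x_0$ must itself lie near $X$. I would first argue that there is $\d_0\in(0,\e_0)$ such that, for $0<\d<\d_0$, any $a\in\g_\e\cap S_\d$ has all of its closest points $a'$ on $\G$ within a small (uniformly controlled) neighborhood of $X$: indeed $|x_0-a'|\le|x_0-a|+|a-a'|\le\d+|\e|$, so $a'$ lies in the thin annular shell $|\e|\le|x_0-a'|\le|\e|+\d$ intersected with $\G$, and as $\d\to 0$ this set shrinks to $X$ by compactness and continuity of $\dist(x_0,\cdot)$. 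Consequently the disk $B(a',\e)$ is, up to an error $O(\d)$, one of the disks $B(w,\e)$ with $w\in X$, all of which already contain $\Sigma$ and are tangent (internally, for $\e>0$) to $S(x_0,|\e|)$ near the corresponding point of $X$.

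Next I would use the elementary spherical fact quoted just before the lemma. Writing $a=\d e^{i\a}$ relative to center $x_0=0$, the point $a'$ with $|a-a'|=\e$ satisfies: the part of $S_\d$ lying \emph{outside} $B(a',\e)$ is a circular arc subtending an angle of only $O(\sqrt{\d/\e})$ about $a$; equivalently, all of $S_\d$ except a tiny arc around $a$ lies inside $B(a',\e)$. More precisely, $S(a',\e)\cap S_\d$ consists of at most two points, and the component(s) of $S_\d\setminus\{a\}$ not adjacent to that tiny exceptional arc are swallowed by $B(a',\e)$. This gives the ``Specifically'' clause directly once $\d_0$ is small enough that the tiny exceptional arc around $a$ is genuinely short: at least one of $S^1_{\d,a},S^2_{\d,a}$ is contained in $B(a',\e)$, hence consists of points at distance $<\e$ from $\G$, hence disjoint from $\g_\e$.

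From the ``Specifically'' clause the first assertion follows by a short combinatorial argument. Suppose $\g_\e\cap S_\d$ contained three distinct points $a,b,c$ in that cyclic order along the arc $S_\d$. Applying the clause to the middle point $b$: one of the two sub-arcs of $S_\d\setminus\{b\}$ lies in $B(b',\e)$ and therefore contains no point of $\g_\e$; but each of the two sub-arcs of $S_\d\setminus\{b\}$ contains one of $a,c$, a contradiction. Hence $\g_\e\cap S_\d$ has at most two points. The case $\e<0$ is handled identically, with ``inside $B(a',\e)$'' replaced by the corresponding statement that the relevant sub-arc of $S_\d$ lies at distance $>|\e|$ from $\G$ (points outside $\overline\Omega$ whose nearest $\G$-point is $a'$), using the same spherical estimate.

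The main obstacle, and the place where $\d_0$ must be chosen with care, is the \emph{uniformity} in the first step: I need that for \emph{every} $a\in\g_\e\cap S_\d$ \emph{simultaneously}, every nearest point $a'$ is close enough to $X$ that the exceptional arc of $S_\d\setminus B(a',\e)$ around $a$ stays short — short enough not to reach the neighboring intersection point. This requires combining (i) the quantitative shrinking of the shell $\{w\in\G:|\e|\le|x_0-w|\le|\e|+\d\}$ toward $X$, which is where compactness of $\G$ and the definition of $X$ as an exact level set of $|x_0-\cdot|$ enter, with (ii) the $O(\sqrt{\d/\e})$ bound on the exceptional angle, and then choosing $\d_0$ depending on $\e$ and on the modulus of the shrinking in (i). One should also note the degenerate possibilities $x_1=x_2$ (so $\Sigma$ is a point and $S_\d\setminus U$ is a single arc) and $|x_1-x_2|=2|\e|$ (so $S_\d$ has two components); the argument above is insensitive to these since it is run on each component $S_\d$ separately.
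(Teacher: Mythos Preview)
Your compactness step (forcing every nearest point $a'$ of any $a\in\g_\e\cap S_\d$ into an arbitrarily small neighborhood of $X$, hence into a prescribed angular neighborhood of $\Sigma$) is correct and is exactly how the paper pins down $\d_0$. Your combinatorial deduction of ``at most two points'' from the ``Specifically'' clause is also fine. The gap is in the middle step, where you assert that
\[
S_\d\setminus B(a',\e)\ \text{is an arc of angular size } O\!\left(\sqrt{\d/\e}\right)\ \text{about }a,
\]
equivalently that $B(a',\e)$ swallows all of $S_\d$ except a tiny arc. This is false. Because $a'$ is close to $X\subset\Sigma$, the disk $B(a',\e)$ is close to one of the disks $B(x_1,\e)$, $B(x_2,\e)$ --- and those are exactly the disks \emph{removed} from $S(x_0,\d)$ to form $S_\d$. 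A direct computation (with $x_0=0$, $|a'|\in[\e,\e+\d]$) shows that $S(0,\d)\cap B(a',\e)$ is an arc of angular half-width at most $\cos^{-1}(\d/(2\e))$ centered at $\arg a'$; since $\arg a'$ lies in the angular range of $\Sigma$, this arc meets $S_\d$ only in a short piece near one of its endpoints, not in ``most of $S_\d$''. The elementary circular fact stated before the lemma concerns $S(a,\e)\setminus B(0,\e)$ and does not transfer to $S_\d\setminus B(a',\e)$ in the way you invoke it.

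The paper proves the ``Specifically'' clause by contradiction, using a different and sharper observation. Suppose $b_1\in S^1_{\d,a}$ and $b_2\in S^2_{\d,a}$ both lie outside $B(a',\e)$. Then $|b_j-a'|\ge\e=|a-a'|$, so $a'$ lies on the $a$-side of the perpendicular bisector of $[a,b_j]$ for $j=1,2$. Since $a,b_1,b_2$ all lie on $S(x_0,\d)$, these bisectors pass through $x_0$; the two half-plane constraints then force
\[
\tfrac{\arg b_1+\arg a}{2}\ \le\ \arg a'\ \le\ \tfrac{\arg b_2+\arg a}{2},
\]
so $\arg a'$ lands in the angular range of $S_\d$. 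But your (correct) compactness step gives $\arg a'$ in a $\xi$-neighborhood of the angular range of $\Sigma$, and with $\d_0$ chosen so that $\cos^{-1}(\d_0/(2\e))>\xi$ these two ranges are disjoint --- contradiction. This perpendicular-bisector-through-the-center idea is the missing ingredient; it replaces your incorrect ``$B(a',\e)$ swallows $S_\d$'' picture. (A minor side remark: for $\e<0$ the ``Specifically'' clause still reads ``one $S^j_{\d,a}\subset B(a',|\e|)$'', i.e.\ distance $<|\e|$ from $\G$; your proposed sign flip to ``$>|\e|$'' is not what is needed.)
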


\begin{rem}
For every non-isolated point $x_0$ on $\g_\e$ and every $\d \in (0, \e_0)$, the set $\g_{\e}\cap S(x_0,\d)$ contains
at most two points when $|x_1-x_2| < 2| \e|$, and at most four points when $|x_1-x_2| = 2| \e|$. See Figure \ref{fig:figure2} for some of the possibilities.
\end{rem}

\begin{proof}
We prove the lemma for $\e>0$. The  case $\e<0$ is essentially the same.

Assume as we may that $x_0=0$, $x_1 = \e e^{i\tau}, x_2 = \e e^{i(2\pi-\tau)}$ with $\tau \in [\pi/2,\pi]$,  and that $\S \subset \{z \colon \text{Re}\, z \leq 0\}$.  Consider from now on only those $\d$ in $(0,\e_0)$.
Consequently,  $S_\d \subset \{z \colon \text{Re}\, z > 0\}$ when $0<|x_1-x_2|< 2 \e$ and $S(0,\d)\cap \{z \colon \text{Re}\, z \geq 0\} \subset S_\d $ when $x_1=x_2=-\e$;
assume therefore without loss of generality that $S_\d \subset \{z \colon \text{Re}\, z \geq 0\}$  when $|x_1-x_2|= 2 \e$.
It is straightforward to check that
\begin{equation}\label{argument-S}
-(\tau -\cos^{-1}(\frac{\d}{2\e}))\leq \arg z \leq    \tau -\cos^{-1}(\frac{\d}{2\e}) \quad \text{for all} \,\,\, z\in S_\d.
\end{equation}
Fix  a number $\xi \in (0,\frac{\pi}{24})$ depending on $\tau$ so that $\tau -\xi >\pi/2$ when $\tau >\pi/2$, and $\xi=\pi/48$ when $\tau=\pi/2$.
Fix also a number $\d_0 \in (0,\e_0)$, satisfying $\cos^{-1}(\frac{\d_0}{2\e})>\frac{5\pi}{12}$, and having the property that
for any $a\in \g_\e \cap B(0,\d_0)$ and any point $a'$ on $\G$ nearest to $a$, i.e., $|a-a'|=\e$, we have
\begin{equation}\label{argument-a'-1}
\tau -\xi \leq \arg a' \leq 2\pi-\tau +\xi.
\end{equation}
If there were no such $\d_0$,  $X$ would contain a point outside $\S$.
\medskip

Suppose the assertion in the lemma is false. Then, there exist
$\d\in(0,\d_0)$, $a\in \g_\e \cap S_\d,a$, a point  $a'\in\G$ with $|a-a'|=\e$, $b_1 \in S_{\d,a}^1$ and $b_2 \in S_{\d,a}^2$ such that $b_1,b_2 \notin B(a',\e)$.
Assume as we may that
\[ - (\tau - \cos^{-1}(\frac{\d}{2\e})) \leq \arg{b_1} < \arg{a} < \arg{b_2} \leq \tau - \cos^{-1}(\frac{\d}{2\e}).  \]
Let $l_1$ (resp. $l_2$) be the line that bisects the segment $[a,b_1]$ (resp. $[a,b_2]$). Since $|b_j- a'| \geq \e = |a-a'|$ for $j=1$ and $2$, the point $a'$ lies in the closure of the component of $\R^2\setminus \{l_1,l_2\}$ that contains $a$. In particular by \eqref{argument-S},
\[
- (\tau - \cos^{-1}(\frac{\d}{2\e})) \leq  \frac{\arg{b_1} + \arg{a}}{2} \leq \arg{a'} \leq \frac{\arg{b_2} + \arg{a}}{2}\leq \tau - \cos^{-1}(\frac{\d}{2\e}),
\]
which is impossible in view of (\ref{argument-a'-1}) and the fact that $\xi < \pi/24 < \cos^{-1}(\frac{\d_0}{2\e})$. This proves the second assertion and the lemma.
\end{proof}

\begin{lem}
\label{K-D}
Suppose that for some $\e\neq 0$, there exist a connected component $D$ of $\D_{\e}$ and a connected component $G$ of $\g_{\e}\cup\D_{\e}$
such that $\overline{D} \subsetneq G$.
Then, there exists a point $x_0 \in \partial D$ and points $x_1,x_2 \in \G$ such that $x_0,x_1,x_2$ are collinear and
\[ |x_0 - x_1| = |x_0 - x_2| = |\e|. \]
Furthermore, $\G^{\{x_0\}}=\{x_1,x_2\}$.
\end{lem}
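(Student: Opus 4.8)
The hypothesis says there is a component $D$ of $\D_\e$ and a component $G$ of $\g_\e\cup\D_\e$ with $\overline D\subsetneq G$; I want to produce a boundary point $x_0\in\partial D$ whose nearest points on $\G$ are two collinear points at distance $|\e|$. First I would observe that since $\overline D\subsetneq G$ and $G$ is connected, $\partial D$ (which is a Jordan curve by Lemma~\ref{appr}) cannot be all of $\partial G$; more precisely there must be a point $x_0\in\partial D$ that is \emph{not} accessible from outside $G$, i.e. $x_0$ lies in the ``interior'' of $G$ relative to the complement of $\D_\e$. The idea is that near such an $x_0$ the set $\R^2\setminus(\D_\e\cup\D_{-\e})$ (or $\overline\Omega\setminus\D_\e$, whichever side is relevant), which is path-connected by the observations in Section~\ref{geometry}, must ``pinch'': $x_0$ is a point of $\partial D$ that is approached by the complementary region from two sides that are separated within $G$. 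So the first key step is a topological one: extract from $\overline D\subsetneq G$ the existence of $x_0\in\partial D$ together with two ``complementary fingers'' touching $\g_\e$ at $x_0$ from genuinely different directions.

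**Using the distance function at $x_0$.** Once $x_0\in\partial D\subset\g_\e$ is fixed, consider $\G^{\{x_0\}}=\{y\in\G:|x_0-y|=|\e|\}$, which is nonempty and compact. If $\G^{\{x_0\}}$ were a single point $z$, then by Lemma~\ref{circgamma} a whole neighborhood of $x_0$ in $\g_\e$ would be a circular arc on $S(z,|\e|)$, and the region $\D_\e$ near $x_0$ would lie (locally) entirely on one side of that arc — this would contradict the pinching, i.e. contradict $x_0$ being approached by the complementary set from both sides inside $G$. Hence $\diam\G^{\{x_0\}}=|x_1-x_2|>0$ for the extremal pair $x_1,x_2$. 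The heart of the argument is then to show that this extremal pair is in fact \emph{antipodal} on $S(x_0,|\e|)$, i.e. $|x_1-x_2|=2|\e|$, which forces $x_0,x_1,x_2$ collinear and $\G^{\{x_0\}}=\{x_1,x_2\}$ (the last equality because by Lemma~\ref{X} $\G^{\{x_0\}}$ lies in a semicircular arc of $S(x_0,|\e|)$, and a semicircular arc with two antipodal endpoints must consist of exactly those two endpoints together with—but $\diam=2|\e|$ pins the endpoints, and any third point of $\G^{\{x_0\}}$ on that semicircle not equal to $x_1$ or $x_2$ would again produce, via Lemma~\ref{4points}, a forbidden configuration).

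**Why antipodal.** This is the step I expect to be the main obstacle. Suppose $|x_1-x_2|<2|\e|$. Then in the local picture set up before Lemma~\ref{4points} (with $x_0=0$, $\Sigma\subset\{\operatorname{Re} z\le 0\}$, and $S_\d$ the relevant component of $S(0,\d)\setminus(B(x_1,|\e|)\cup B(x_2,|\e|))$ living in $\{\operatorname{Re} z>0\}$) we have a \emph{single} arc $S_\d$ through which $\g_\e$ near $x_0$ must pass, and Lemma~\ref{4points} says $\g_\e\cap S_\d$ has at most two points, with the second assertion of Lemma~\ref{4points} guaranteeing that on one side of each such intersection point the circle $S_\d$ already lies inside some ball $B(a',|\e|)$, i.e. inside $\D_\e$ when $\e>0$ (and inside $\R^2\setminus\overline{\D_{-\e}}$, etc., when $\e<0$). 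The point is that this structure shows $\g_\e$ locally separates $B(0,\d)$ into at most two complementary pieces, and crucially the complement-of-$\D_\e$ piece (on the relevant side) is \emph{on one side only}, locally connected across — contradicting the two-sided pinching of the complementary region that defined $x_0$. In other words: non-antipodality $\Rightarrow$ $D$ is locally on one side of a single arc $\Rightarrow$ $\overline D$ is not strictly contained in a larger connected component $G$ of $\g_\e\cup\D_\e$, because nothing can ``leak past'' $x_0$. So the only way to have $\overline D\subsetneq G$ is $|x_1-x_2|=2|\e|$, in which case $S(0,\d)\setminus U$ has \emph{two} components — precisely the geometry that lets $G$ extend past $\partial D$ — and $x_0,x_1,x_2$ are collinear. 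I would carry out the contradiction carefully using Lemma~\ref{curvesincomp} / Remark~\ref{remappr} to join points of $D$ near $x_0$ to $x_0$ by short arcs, and the second assertion of Lemma~\ref{4points} to control which side of $S_\d$ is ``used up'' by $\D_\e$; the bookkeeping of the two cases $\e>0$ and $\e<0$, and of whether $[x_1,x_2]$ separates $\Omega$ from the far part of $\G$, is the delicate part.
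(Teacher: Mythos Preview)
Your approach is essentially the same as the paper's: pick $x_0\in\partial D$ in the closure of $E:=G\setminus\overline D$, and use Lemma~\ref{4points} to show that if $|x_1-x_2|<2|\e|$ then the single arc $S_\d$ already meets $\partial D\subset\g_\e$ in two points, exhausting the allowed intersections and forcing $E\cap S_\d=\emptyset$, contradicting $x_0\in\overline E$.

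Two small corrections. First, you cannot invoke Lemma~\ref{circgamma} to dispose of the case $\G^{\{x_0\}}=\{z\}$, since that lemma assumes $\g_\e$ is a Jordan curve, which is not given here; fortunately this case is already covered by $|x_1-x_2|<2|\e|$ (with $x_1=x_2$), so just drop the separate treatment. Second, your argument for $\G^{\{x_0\}}=\{x_1,x_2\}$ is incomplete: Lemma~\ref{X} only places $\G^{\{x_0\}}$ in a semicircle, which does not by itself rule out a third point $x_3$ strictly between $x_1$ and $x_2$ on that semicircle. The paper handles this by observing that such an $x_3$ (with $\operatorname{Re}x_3<0$ in the normalized picture) forces every point of the left half of a small disk $B(0,\d_2)$ to lie within $|\e|$ of one of $x_1,x_2,x_3$, so $(\D_\e\cup\g_\e)\cap B(0,\d_2)$ is confined to the right half; then the same two-point count on $S_\d$ via Lemma~\ref{4points} again gives $E\cap S_\d=\emptyset$, the same contradiction.
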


\begin{proof}
We treat the case $\e>0$ only; the case $\e<0$  is similar.
Let $E = G \setminus \overline{D}$. Since $G$ is connected, we have that $\overline{E} \cap \overline{D} \neq \emptyset$ and $\overline{E} \cap \partial D \neq \emptyset$. Fix a point $x_0 \in \overline{E} \cap \partial D$; clearly $x_0$ is a non-isolated point in $ \g_\e$.
Define $X=\G^{\{x_0\}}$, the shortest subarc $\Sigma$ of $S(x_0,\e)$  containing $X$,  its end points $x_1,x_2$, the open set $U$, and the number
 $\d_0 >0 $,  relative to the point $x_0$ as  in Lemma \ref{4points}.

Suppose that $|x_1- x_2|< 2 \e$. Then for $\d\in(0,\e_0)$, $S(x_0,\d)\setminus U$ is the arc $S_\d$. Since $x_0 \in \partial D$, there exists a number $\d_1=\d_1(x_0,D,\e)>0$ such that  $ D \cap S_\d$ contains a non-trivial arc for every $0<\d<\d_1$.   Therefore
$\partial D \cap S_\d$ contains at least two points in $ \g_\e$.  Hence,
 by Lemma \ref{4points}, $E \cap S_\d =\emptyset$ when $0<\d<\min\{\d_0,\d_1\}$. This contradicts the assumption
$x_0 \in \overline{E}$. Therefore $|x_1- x_2|= 2 \e$ and $x_0, x_1$ and $ x_2$ are collinear.

We now prove $\G^{\{x_0\}}=\{x_1,x_2\}$.
Assume, as in Lemma \ref{4points}, that  $x_0 = 0$, $\Sigma \subset \{\text{Re}\, z \leq 0\}$, $x_1 = \e e^{i\pi/2}$ and $x_2 = \e e^{i 3\pi/2}$.
Suppose there exists another point $x_3 \in \G^{\{x_0\}}\setminus \{x_1,x_2\}$; so $\text{Re}\, x_3 < 0$. Observe, by elementary calculations, that there exists $\d_2=\d_2(x_3,\e)\in (0,\e_0)$ so that for any $y$ in the half disk $B(0,\d_2) \cap \{\text{Re}\, z < 0\}$, one of the numbers $|y-x_1|$, $|y-x_2|$, $|y-x_3|$ is strictly less than $\e$.
Therefore, $(\D_\e \cup \g_\e) \cap  B(0,\d_2 ) \subset  \{\text{Re}\, z \geq 0\}   \setminus U $.
Since $x_0\in \overline D$, $\partial D \cap S_\d$ contains at least two points in $ \g_\e$ for all sufficiently small $\d$. As before, it follows from Lemma \ref{4points} that  $E \cap S_\d$ must be empty for all sufficiently small $\d$, a contradiction. This proves that $\G^{\{x_0\}}=\{x_1,x_2\}$, and the lemma.
\end{proof}

The next two propositions lead naturally to the $(1/2,r_0)$-chordal condition for the LJC property in
 Theorem \ref{betathmLC}.

\begin{prop}\label{boundary}
Suppose that for some $\e\neq 0$, $\D_\e \neq \emptyset$,  $\g_{\e}\cup \D_{\e}$ is connected, and $\overline\D_\e \subsetneq \g_{\e}\cup \D_{\e} $. Then, there exist points $x_0\in \g_{\e}$ and $x_1,x_2 \in \G$ which are collinear such that
\[ |x_0-x_1| = |x_0-x_2| = |\e|. \]
Moreover, $\G^{\{x_0\}} = \{x_1,x_2\}$.
\end{prop}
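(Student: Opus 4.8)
The plan is to derive the proposition directly from Lemma \ref{K-D} by choosing the ambient component $G$ to be all of $\g_\e\cup\D_\e$. Write $G=\g_\e\cup\D_\e$; by hypothesis $G$ is connected and, since $\D_\e\neq\emptyset$, nonempty, so $G$ is itself a connected component of $\g_\e\cup\D_\e$. First I would record the elementary inclusion $\partial\D_\e\subseteq\g_\e$: the signed distance function $\ddist(\cdot,\G)$ is continuous, so $\D_\e$ is open, and a boundary point $x$ of $\D_\e$ must satisfy $\ddist(x,\G)=\e$ (it is a limit of points where $\ddist(\cdot,\G)$ exceeds $\e$ for $\e>0$, or falls below $\e$ for $\e<0$, yet $x\notin\D_\e$). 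Consequently $\overline{\D_\e}\subseteq\g_\e\cup\D_\e=G$, and in particular $\overline D\subseteq G$ for every connected component $D$ of $\D_\e$.

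The key step is to produce a connected component $D$ of $\D_\e$ with $\overline D\subsetneq G$, since that is exactly the hypothesis Lemma \ref{K-D} needs. I would split into two cases. If $\D_\e$ is connected, then $D:=\D_\e$ is its only component and $\overline D=\overline{\D_\e}\subsetneq G$ is precisely the standing assumption $\overline{\D_\e}\subsetneq\g_\e\cup\D_\e$. If $\D_\e$ is disconnected, I claim that \emph{every} component $D$ already satisfies $\overline D\subsetneq G$: otherwise $\overline D=G$, so $G=\overline D$ would contain some other component $D'$ of $\D_\e$; but $D'$ is open and disjoint from $D$, so each point of $D'$ has a whole neighborhood contained in $D'$ and hence missing $D$, contradicting $D'\subseteq\overline D$. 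Thus in all cases a component $D$ with $\overline D\subsetneq G$ exists.

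Finally I would apply Lemma \ref{K-D} to this component $D$ of $\D_\e$ together with the component $G$ of $\g_\e\cup\D_\e$: it produces a point $x_0\in\partial D$ and points $x_1,x_2\in\G$ that are collinear with $|x_0-x_1|=|x_0-x_2|=|\e|$ and $\G^{\{x_0\}}=\{x_1,x_2\}$. Since $\partial D\subseteq\partial\D_\e\subseteq\g_\e$, the point $x_0$ lies on $\g_\e$, which is exactly the assertion of the proposition. The argument is essentially a repackaging of Lemma \ref{K-D}; the only genuine (and minor) obstacle is the point-set-topology step guaranteeing a component whose closure is strictly smaller than $G$, which the connected/disconnected dichotomy above settles, the rest being bookkeeping with the inclusion $\partial\D_\e\subseteq\g_\e$.
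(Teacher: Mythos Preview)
Your proof is correct and follows essentially the same route as the paper: the paper's argument is simply ``from the assumptions, there exist a connected component $D$ of $\D_{\e}$ and a connected component $G$ of $\g_{\e}\cup\D_{\e}$ such that $\overline{D} \subsetneq G$; apply Lemma~\ref{K-D}.'' You have just supplied the point-set details (the inclusion $\partial\D_\e\subseteq\g_\e$ and the connected/disconnected dichotomy for $\D_\e$) that the paper leaves implicit.
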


From the assumptions, there exist a connected component $D$ of $\D_{\e}$ and a connected component $G$ of $\g_{\e}\cup\D_{\e}$
such that $D \subset \overline{D} \subsetneq G$. The proposition follows from Lemma \ref{K-D}.

\begin{rem}
The point $x_0$ in Proposition \ref{boundary}, which is chosen according to Lemma \ref{K-D}, lies, in fact, on the boundary of a component of $\D_\e$.
\end{rem}

\begin{prop}\label{gammadelta}
Suppose that $\D_{\e}\neq \emptyset$  and $\g_{\e}\cup \D_{\e}$ is not connected for some $\e\neq 0$. Then, there exist points $x_0\in \Omega$ and $x_1,x_2 \in \G$ which are collinear such that
\[ |x_0-x_1| = |x_0-x_2| = \rm{dist} (x_0,\G)<|\e|. \]
Moreover, $\G^{\{x_0\}} = \{x_1,x_2\}$.
\end{prop}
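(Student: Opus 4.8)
The plan is to exploit the disconnectedness of $\g_\e\cup\D_\e$ to produce a component of $\D_\e$ that is "too small" relative to $\G$, and then locate a point $x_0\in\Omega$ realizing its distance to $\G$ at two collinear points. Since $\G=\partial\Omega$ and $\Omega$ is connected, but $\g_\e\cup\D_\e$ is not, the components of $\D_\e$ cannot fill up a full collar of $\G$; concretely, there exists a connected component $D$ of $\D_\e$ whose closure $\overline D$ does not contain all of $\G$ in the following sense: some point $z^\ast\in\G$ has $\dist(z^\ast,D)>0$, or more precisely the segment from a point of $D$ toward its nearest point of $\G$ must "escape" a given component. I would first make this precise: because $\overline\Omega\setminus\D_\e$ is path-connected (shown in Section \ref{geometry}) while $\Omega\cap(\g_\e\cup\D_\e)$ is not, one of the components $D$ of $\D_\e$ satisfies $\overline D\cap\G=\emptyset$, i.e.\ $D$ is a bounded component of $\D_\e$ lying compactly inside $\Omega$ and not touching $\G$.

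Next I would look at $\partial D$. Every point of $\partial D$ lies on $\g_\e$ (by continuity of the distance function and maximality of $D$), and by Lemma \ref{appr} $\partial D$ is a Jordan curve; it is also non-degenerate. Fix any $w\in\partial D$ and let $w'\in\G$ with $|w-w'|=\dist(w,\G)=\e$ (we are in the case $\e>0$; the case $\e<0$ is symmetric, working with the unbounded complementary component). Now I would move the center: consider the point $x_0$ on the segment $[w,w']$ at distance $t$ from $w'$, for $t$ slightly less than $\e$, so that $\dist(x_0,\G)=t<\e$ and $x_0\in\D_\e$, and in fact $x_0$ lies in a component of $\D_\e$; the issue is whether $x_0\in D$. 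Here is where I would instead argue by a minimization: among all points in $\overline D$ consider $\max_{x\in\overline D}\dist(x,\G)$; call it $m\le\e$. If $m<\e$ then $\overline D\cap\g_\e=\emptyset$, contradicting $\partial D\subset\g_\e$ and $\partial D\neq\emptyset$ — unless $D$ is empty, excluded by hypothesis $\D_\e\neq\emptyset$ combined with the choice of $D$. Hence $m=\e$ and there is $x_0\in\overline D$ with $\dist(x_0,\G)=\e$, i.e.\ $x_0\in\g_\e\cap\overline D=\partial D$. So far this only recovers a boundary point; the extra collinearity must come from the geometry of $D$ near such a point, exactly as in Lemma \ref{K-D}.

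The cleanest route, which I would actually take, is to reduce directly to Lemma \ref{K-D}. Since $\g_\e\cup\D_\e$ is disconnected and $\D_\e\neq\emptyset$, pick a component $G_0$ of $\g_\e\cup\D_\e$ and a component $D$ of $\D_\e$ contained in it; I claim one may choose them so that $\overline D\subsetneq G_0$. If for \emph{every} component $D$ of $\D_\e$ the closure $\overline D$ equalled its ambient component of $\g_\e\cup\D_\e$, then $\g_\e\cup\D_\e$ would be a disjoint union of the sets $\overline D$, each of which is a component of $\D_\e$ together with its frontier; since $\Omega\setminus\D_\e$ is path-connected and $\G$ is connected with $\overline\Omega\setminus\D_\e$ path-connected, one checks that this forces a single such set, hence $\g_\e\cup\D_\e$ connected, a contradiction. (Here I would use that at least one component $D$ of $\D_\e$ lies in $\Omega$ with $\overline D\cap\G=\emptyset$, so $G\setminus\overline D\neq\emptyset$ where $G$ is $D$'s component of $\g_\e\cup\D_\e$: indeed, $\G\subset\overline\Omega\setminus\D_\e$ meets the closure of this component's complement but not $\overline D$.) Given such $D\subsetneq G$, Lemma \ref{K-D} provides $x_0\in\partial D$ and collinear $x_1,x_2\in\G$ with $|x_0-x_1|=|x_0-x_2|=\e$ and $\G^{\{x_0\}}=\{x_1,x_2\}$. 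Finally, to get the \emph{strict} inequality $\dist(x_0,\G)<|\e|$ with $x_0\in\Omega$: slide $x_0$ a tiny amount $\rho>0$ along the perpendicular bisector of $[x_1,x_2]$ toward the segment $[x_1,x_2]$, to a new point $\tilde x_0\in\Omega$. For $\rho$ small, $\tilde x_0$ still has $x_1,x_2$ among its nearest points on $\G$ — this uses $\G^{\{x_0\}}=\{x_1,x_2\}$ exactly, so no other point of $\G$ interferes under a small perturbation — and $|\tilde x_0-x_1|=|\tilde x_0-x_2|=\dist(\tilde x_0,\G)<\e$; relabel $\tilde x_0$ as $x_0$.

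The main obstacle I anticipate is the combinatorial/topological step isolating a component $D$ of $\D_\e$ with $\overline D\cap\G=\emptyset$ (equivalently $\overline D\subsetneq G$ for its component $G$ of $\g_\e\cup\D_\e$): one must genuinely use that $\Omega$ is connected, that $\G$ is connected and lies in $\overline\Omega\setminus\D_\e$, and that $\overline\Omega\setminus\D_\e$ is path-connected (established in Section \ref{geometry}), to convert "$\g_\e\cup\D_\e$ disconnected" into the existence of a component of $\D_\e$ that fails to touch $\G$. Everything after that is a direct citation of Lemma \ref{K-D} plus the elementary perturbation argument, which is where the hypothesis $\G^{\{x_0\}}=\{x_1,x_2\}$ earns its keep by guaranteeing the strict inequality and the membership $x_0\in\Omega$.
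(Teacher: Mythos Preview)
Your approach has a fatal error in the perturbation step. From Lemma \ref{K-D} applied at level $\e$ you obtain $x_0\in\g_\e$ collinear with $x_1,x_2\in\G$ and $|x_0-x_1|=|x_0-x_2|=|\e|$; hence $x_0$ is the \emph{midpoint} of $[x_1,x_2]$ and already lies on that segment. The perpendicular bisector of $[x_1,x_2]$ passes through $x_0$ perpendicular to $l_{x_1,x_2}$, so sliding $x_0$ along it by $\rho>0$ gives $|\tilde x_0-x_1|=|\tilde x_0-x_2|=\sqrt{\e^2+\rho^2}>|\e|$, not $<|\e|$. In fact no small perturbation of $x_0$ can keep $x_1,x_2$ as nearest points on $\G$ while strictly decreasing the common distance: along the bisector the distance increases, and off the bisector the equidistance is destroyed. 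Your earlier topological claim---that disconnectedness of $\g_\e\cup\D_\e$ forces some component $D$ of $\D_\e$ to satisfy $\overline D\subsetneq G_D$---is also not established: components of $\g_\e\cup\D_\e$ lying entirely in $\g_\e$, or several components each coinciding with the closure of a component of $\D_\e$, are not excluded by the path-connectedness of $\overline\Omega\setminus\D_\e$.

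The paper sidesteps both problems by changing the level rather than perturbing the point. Fix $x\in D\subset\D_\e$ and $y$ in a different component of $\g_\e\cup\D_\e$, and set $d_0=\sup\{\d>0:\ x,y\ \text{lie in a common component of}\ \D_\d\}$, so that $0<d_0<\e$. The nested intersection $G=\bigcap_{0<\d<d_0}\overline{G_\d}$ (with $G_\d$ the common component of $\D_\d$ containing $x,y$) is shown to be exactly the component of $\g_{d_0}\cup\D_{d_0}$ containing both $x$ and $y$, while by the definition of $d_0$ the points $x,y$ lie in distinct components of the open set $\D_{d_0}$. Hence the component of $\D_{d_0}$ through $x$ has closure strictly contained in $G$, and Lemma \ref{K-D} applied at level $d_0$ yields $x_0$ with $|x_0-x_1|=|x_0-x_2|=d_0<|\e|$ directly---the strict inequality is built into the construction, with no perturbation needed.
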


\begin{proof}

We first prove the proposition for $\e>0$. Choose  a connected component $D$  of $\D_\e$, a point $x\in D$, and a point $y$ in a connected component of $\D_{\e}\cup\g_{\e}$ that does not meet $\overline D$, and define
\[
d_0 = \sup\{\d>0 \colon x, y \,\, \text{are in a common component of} \,\, \D_\d\}.
\]
Since $\Omega$ is path connected, $d_0>0$; and since $x$ and $y$ are in two different components of the closed set $\g_{\e}\cup \D_{\e}$, $d_0 < \e$.

For $\d \in (0,d_0)$, let $G_\d$ be the component of $\D_\d$ that contains $x$ and $y$. Then, for $0 <\d<\d'<d_0$ we have $G_{\d'} \subset \overline{G_{\d'}} \subset G_\d$. Since $\{\overline{G_\d}\}_{\d\in (0,d_0)}$ is a nested family of compact connected sets, the intersection $G = \bigcap_{0 <\d<d_0} \overline G_\d$ is a connected subset of $\g_{d_0}\cup\D_{d_0}$ that contains $\overline D \cup \{y\}$.

We claim that $G$ is the component of $\g_{d_0}\cup \D_{d_0}$ that contains $x,y$. Indeed, let $\tilde G$ be the component of $\g_{d_0}\cup \D_{d_0}$ that contains $x,y$. Clearly $G\subset \tilde G$. Since  the set $\bigcup_{x\in \tilde G}B(x,\d)$ is open and connected for every $\d \in (0,d_0)$,
\[\tilde G \subset \bigcup_{x\in \tilde G}B(x,\d) \subset G_{d_0-\d} \qquad \text{for each }\,\,\d \in (0,d_0).
\]
So $\tilde G \subset G$ and $G$ is the component of $\g_{d_0}\cup \D_{d_0}$ that contains $x,y$. Hence, $\overline{D} \subsetneq G$ and
the proposition now follows from Lemma \ref{K-D}.

Suppose now that $\e<0$. Choose $D, x, y$ as before, and define
\[
d_0 = \inf\{\d<0 \colon x, y \,\, \text{are in a common component of} \,\, \D_\d\}.
\]
For $\d \in (d_0,0), $ let $G'_{\d}$ be the component of $\D_{\d}$ that contains $x$ and $y$, and let $G' = \bigcap_{d_0<\d<0 } \overline {G'}_\d$.  Since, for all $d_0<\d<0$,  sets $\R^2 \setminus \overline {G'}_\d$ are contained in a fixed planar disk, the intersection $G'$ is connected. The rest of the proof is similar to that of the case $\e>0$.
\end{proof}

\begin{rem}
The point $x_0$ in Proposition \ref{gammadelta}, chosen according to Lemma \ref{K-D}, lies on the boundary of a component of $\D_{d_0}$ for some $0< |d_0|<|\e|$.
\end{rem}

\section{Level Curves and Level Quasicircles}\label{mainresults}

In this section, we give the proofs of Theorem \ref{betathmLC} and Theorem \ref{betathmLQC} along with two examples that show the sharpness of the conditions.

\begin{proof}[Proof of Theorem \ref{betathmLC}]
We give the proof for the case $\e>0$; the case $\e<0$ is practically the same. By the assumption of the theorem, there exists $r_0>0$ such that $\z_{\G}(x,y)\leq 1/2$, for all $x,y \in \G$ with $|x-y|\leq r_0$.

First we claim that $\D_\e\cup\g_\e$  is connected for all  $\e\in (0,r_0/2)$. Otherwise, by Proposition, \ref{gammadelta}, there exist $d_0 \in (0,r_0/2)$  and collinear points $x_0 \in \g_{d_0}$ and $x_1,x_2 \in \G$ such that $\G^{\{x_0\}} =\{y\in \G \colon |x_0-y|=d_0\}= \{x_1,x_2\}$.
The line $l$ that contains $x_0$ and is perpendicular to $l_{x_1,x_2}$ intersects $\G(x_1,x_2)$ at some point $z$. Note that $|x_1-x_2|=2 d_0< r_0$ and that
\[ \dist(z, l_{x_1,x_2}) = |x_0-z| > \dist (x_0,\G)=d_0. \]
So $\z_{\G}(x_1,x_2) > 1/2$, a contradiction.

Next we claim that  $\D_\e$ must be connected for all $\e \in(0,r_0/2) $.
Otherwise, for some  $\e\in (0,r_0/2)$  the open set  $\D_\e$ would have at least two components, called $D_1, D_2$. By  the continuity of the distance function, each  $D_j, j=1, 2,$ would contain a point $z_j$ of distance $\e' $ to $\G$, for some $\e' \in (\e,r_0/2)$. This would imply  that $\D_{\e'}\cup \g_{\e'}$ is not connected; this contradicts the previous claim.

Therefore, by Lemma \ref{appr},  $\partial \D_\e$ is a Jordan curve for every $\e \in(0,r_0/2)$.
It remains to check that $\g_\e= \partial \D_\e$ for all $\e \in(0,r_0/2)$.
Suppose $\partial \D_\e \varsubsetneq \g_\e$ for some $\e \in(0,r_0/2)$. Then $\overline \D_\e \varsubsetneq \D_\e \cup \g_\e$. Therefore, by Proposition \ref{boundary}, we can find collinear points $x_0\in \g_\e$ and $x_1,x_2 \in \G$ such that $\G^{\{x_0\}} = \{x_1,x_2\}$. As before, this will lead to
the inequality $\z_{\G}(x_1,x_2) > 1/2$, a contradiction. So $\g_\e= \partial \D_\e$.

This completes the proof of the theorem.
\end{proof}

\begin{rem}\label{sharpLJC}
The $(1/2,r_0)$-chordal condition is sharp for the conclusion of Theorem \ref{betathmLC}.

\emph{We construct a chord-arc curve $\G$ with $\z_{\G} = \frac{1}{2}$ which satisfies
\begin{enumerate}
\item[(i)] There exist two sequences of points $\{x_n\},\{y_n\}$ on $\G$ such that $|x_n-y_n| \to 0$ and $\z_{\G}(x_n,y_n) = \frac{1}{2} + 2^{-n}$.
\item[(ii)] There exists a decreasing sequence of positive numbers $\{\e_n\}$ with $\e_n \to 0$ such that $\g_{\e_n}$ is not a Jordan curve.
\end{enumerate}
as follows. Let $\G$ be the boundary of the domain
\[ D = [-1,2] \times [-3,0] \cup \bigcup_{n=0}^{\infty} [2^{-n}-2^{-n-2} , 2^{-n}] \times [0, 2^{-n-2}(1/2 + 2^{-n})] .\]
Observe that $\G$ is a Jordan curve and it is not difficult to show that $\G$ is also a chord-arc. Set, for any $n \in \N$,
\[ x_n = (2^{-n} - 2^{-n-2} , 0) \text{ and } y_n = (2^{-n}, 0). \]
Note that $\z_{\G}(x_n,y_n) = \frac{1}{2} + 2^{-n}$ and that it is not hard to check that $\z_{\G} = \frac{1}{2}$. Let $\Lambda_n = \G(x_n,y_n)$ and $\e_n = 2^{-n-3}$. Then, the set $\g_{\e_n}^{\Lambda_n}=\{x\in \g_{\e_n}\colon \dist(x,\Lambda_n)=\e_n \}$ is the union of the line segment $\{x_n + 2^{-n-3}\}\times [0,2^{-2n-2}]$
and  two quarter-circles $\{x_n + \e_n e^{i\theta} \colon \frac{3\pi}{2}\leq \theta \leq 2 \pi \} \bigcup \{y_n + \e_n e^{i\theta} \colon \pi \leq \theta \leq  \frac{3\pi}{2}   \}$. It follows that $\g_{\e_n}$ is not a Jordan curve.}
\end{rem}

\medskip

We now apply Lemma \ref{curvesincomp}, Lemma \ref{quasiconvex}, and Theorem \ref{betathmLC} to prove Theorem \ref{betathmLQC}. Recall from Lemma \ref{curvesincomp} that $\D_\e$, if a Jordan domain, has no inward cusp. Condition $\z_\G <1/2$, together with the estimates \eqref{zeta-half} below, shows that $\D_\e$ \emph{has no outward cusps}.

\begin{proof}[Proof of Theorem \ref{betathmLQC}]
We prove the theorem for $\e>0$ only. The proof for the case $\e<0$ is practically the same.

By the assumption of the theorem, there exist  $\z\in (0,1/2)$ and $r_0>0$ such that $\z_{\G}(x,y)\leq \z $ for all $x,y \in \G$ with $|x-y|\leq r_0$.
From Theorem \ref{betathmLC} and its proof,  $\g_\e$ is a Jordan curve for every $\e\in (0,r_0/2)$; by Lemma \ref{zetabounded}, $\G$ is a $K(\z)$-quasicircle, therefore satisfies the 2-point condition \eqref{3pts} for some constant $C(\z)>1$. Constants below will depend only on $\z$.

We now prove that there exists $K'>1$ depending only on $\z$ such that $\g_\e$ is a $K'$-quasicircle for any
\[
0 <\e< \min\{\frac{r_0}{10},\frac{\diam \G}{20 C(\z)}\}.
\]
By the $2$-point condition, it suffices to prove that there exists $M>1$, depending only on $\z$, such that
\[ \diam{\g_\e ( x,y)} \leq M|x-y|\quad \text{for all}\,\,x,y \in \g_\e. \]
Given $x$ and $y$ in $ \g_\e$, choose $x',y' \in \G$ such that $|x-x'| = |y-y'| = \e;$
segments $[x,x']$ and $[y,y']$ do not meet except possibly at $x'$ and $y'$.
By Remark \ref{remappr}, there exists a curve $\tau_{x,y}$, with $\tau_{x,y}\setminus \{x,y\}\subset \D_\e$, that connects $x$ to $y$, and satisfies $|x-y|\leq \diam{\tau_{x,y}} \leq C_1(\z)|x-y|$ for some constant $C_1(\z)>1$. Consider the domain $D$  enclosed by the Jordan curve $[x,x'] \cup \G(x',y') \cup [y,y'] \cup \tau_{x,y}$. Let $\g_\e(x,y)^*$ be the component of $\g_\e \setminus \{x,y\}$ that is contained in $D$; note that $\g_\e(x,y)^*$ and $\g_\e(x,y)$ are not necessarily the same arc. It suffices to show that
\[
\diam{\g_\e ( x,y)^*} \simeq |x-y|.
\]
We consider four cases according to the ratios  $|x'-y'|/\e$ and  $|x-y|/\e$.

\emph{Case 1.} $|x'-y'| \geq 4(1-\z)\e$. $\,$ In this case,
$ |x'-y'| -2\e \leq |x-y| \leq |x'-y'| + 2\e $,
which implies
\[ \frac{1-2\z}{2-2\z}|x'-y'| \leq |x-y| \leq \frac{3-2\z}{2-2\z}|x'-y'|.\]
Since $0<\z <1/2$, $\diam{\tau_{x,y}} \simeq |x-y|$ and  $\G$ is a $K(\z)$-quasicircle, we have $ \diam D\simeq |x-y|$. Hence, $\diam{\g_\e ( x,y)^*} \simeq |x-y|$.

\emph{Case 2.} $x' = y'$. $\,$ In this case, $\g(x,y)^*=\g(x,y)$. By Lemma \ref{circgamma}, $\g_\e(x,y)$ is a subarc of $S(x',\e)$ of length at most $\pi \e$, hence $\diam{\g_\e(x,y)} = |x-y|$.

\emph{Case 3.} $0 < |x'-y'| < 4(1-\z)\e$ and $ |x-y| \ge \e (1-2\z)^2/10$. $\,$
Since $\diam D \simeq \e$ and $\g_\e(x,y)^* \subset D$, we have $\diam \g_\e(x,y)^*\simeq |x-y| \simeq \e$.

\emph{Case 4.} $0 < |x'-y'| < 4(1-\z)\e$ and $0< |x-y| < \e (1-2\z)^2/10$. $\,$
In view of Lemma \ref{quasiconvex} and Remark \ref{remappr}, we may assume that $\diam \tau_{x,y} \le 5|x-y| <\e/2$. It is easy to check that in this case $\g(x,y)^*=\g(x,y)$. However, there is no relation between $|x-y|$ and $|x'-y'|$, and $\diam D$ may be much bigger than $|x-y|$. We will construct a new domain $D'$ whose closure contains $\g_\e(x,y)$ and has $\diam D'\simeq |x-y|$.

First, let $R(x',y')$ be the  rectangular domain whose boundary has two sides  parallel to the line $l_{x',y'}$ of length $a=|x'-y'|$, and two other sides having mid-points $x'$ and $y'$ and of  length  $b=2(\e-\z|x'-y'|)$. Then define a domain
\[
U(x',y')=B(x',\e)\cup B(y',\e) \cup R(x',y').
\]
It is possible that $R(x',y')$ is contained in $B(x',\e)\cup B(y',\e)$ for some pairs $x'$ and $y'$.
Nevertheless, $\partial U(x',y')$ are $K''$-quasicircles for some  constant $K''>1$ depending only on $\z$, in particular not on $x'$ and $y'$. This observation follows from the inequalities: $0<\z <1/2$,
\begin{equation}\label{zeta-half}
0< a=|x'-y'| < 4(1-\z)\e, \,\text{and} \,0< \e (1-2\z)^{2} < \frac{b}{2} = \e-\z|x'-y'| < \e.
\end{equation}

Next, we claim that $U(x',y')\cap \overline \D_\e =\emptyset$.
Indeed, for any $z\in R(x',y')$ the line containing $z$ and perpendicular to $l_{x',y'}$
must intersect the arc $\G(x',y')$ at some point $z'$.
Note that $\dist(z,\G)\leq \dist(z,\G(x',y')) \leq |z-z'|\leq \dist(z,l_{x',y'}) +\dist(z',l_{x',y'}) < \frac{b}{2} +\z |x'-y'| =\e$.
Clearly, $\dist(z,\G)<\e$ for all $z\in B(x',\e)\cup B(y',\e)$.

Recall that $x \in \partial B(x',\e)\cap \partial U(x',y')$ and $y \in \partial B(y',\e)\cap \partial U(x',y')$.
Let $T_{x,y}$ be the subarc of  $\partial U(x',y')$ connecting $x$ to $y$  that has the smaller diameter. Then, $T_{x,y} \subset \R^2\setminus \D_\e$, and $\diam T_{x,y} \simeq |x-y|$ because  $\partial U(x',y')$ is a $K''$-quasicircle.

To summarize, $\D_\e$ is a Jordan domain, $x$ and $y$ are two points on $\partial \D_\e$, and $\tau_{x,y}$, $\g_\e(x,y)$,  and $T_{x,y}$ are arcs connecting $x$ to $y$, with
$\tau_{x,y}\setminus \{x,y\}\subset \D_\e$, $\g_\e(x,y)\subset \partial \D_\e$, and $T_{x,y} \subset \R^2\setminus \D_\e$.

Let $D'$ be the domain enclosed by the Jordan curve $\tau_{x,y}\cup T_{x,y}$. We claim that $\g_\e(x,y)$ is contained in $\overline {D'}$. Otherwise, $\tau_{x,y}$ would be contained in the closure of the domain $D''$ enclosed by the Jordan curve $\g_\e(x,y) \cup T_{x,y}$. By the connectedness of $\D_\e$, the entire $\D_\e$ would be contained in $D''$. A preliminary estimate of $\diam \g_\e(x,y)$ from the fact  $ \g_\e(x,y)\subset D$ shows that
\[
\diam \g_\e(x,y) \le  5 |x-y| +2\e + C(\z)|x'-y'| \leq 7 C(\z)\e.
\]
Therefore,
\[
\diam \D_\e \leq \diam D'' \le \diam \g_\e(x,y) + \diam U(x',y') \]
\[
\leq
7 C(\z)\e +4\e+ |x'-y'| \leq 15 C(\z)\e <\frac{3}{4} \diam \G <\diam \D_{\e},
 \]
a contradiction. So $\g_\e(x,y)\subset \overline {D'}$, and therefore
 \[\diam \g_\e(x,y) \leq \diam D' \leq \diam \tau_{x,y} +\diam T_{x,y}  \simeq |x-y|.\]

This completes the proof of $\diam \g_\e(x,y) \simeq |x-y|$ for Case 4, and the theorem.
\end{proof}

\begin{rem}\label{sharpLQC}
The condition $\z_\G< 1/2$ is sharp for the conclusion of Theorem \ref{betathmLQC}.

\emph{We first make an observation.
Given $\alpha \in [0,\pi/12]$,
let $\sigma$ be the circular arc $\{e^{i\theta}\colon  \alpha \leq \theta \leq \pi-\alpha\}$, and
$\G'$ be the infinite simple curve obtained by replacing the segment  $[e^{i\alpha}, e^{i (\pi-\alpha)}]$ on  $l_{e^{i\alpha}, e^{i (\pi-\alpha)}}$ by $\sigma$. The set of points below $\G'$ that have unit distance to $\G'$ is a simple arc $\g'$ consisting of two horizontal semi-infinite lines and two circular arcs $\tau_1$ and $\tau_2$, where $\tau_1$ is a subarc of the circle $S(e^{i\alpha},1)$ connecting $0$ and $-i+e^{i\alpha}$ , and $\tau_2$ is a subarc of the circle $S( e^{i (\pi-\alpha)},1)$ connecting $0$ and $-i+e^{i (\pi-\alpha)}$. Since $\tau_1$ and $\tau_2$ meet at an angle $2\alpha$, the arc $\g'$ is a $K(\alpha)$-quasiline with $K(\alpha) \to \infty$ as $\alpha \to 0$.}

\emph{Fix now  a decreasing sequence $\alpha_n$ converging to $0$ with $\alpha_1 = \pi/12$, and another sequence $\e_n=4^{-n-2}$. Let $p_n$ be the point having coordinates $(2^{-n}, -\e_n \sin \alpha_n)$ and  $\sigma_n$ be the subarc of $S(p_n,\e_n)$ above the real axis; and  let $\omega$ be the simple curve that has end points $-1$ and $1$ and is the union of circular arcs $\bigcup_{n\ge 1} \sigma_n$ and a countable number of horizontal segments in $[0,1]$.  Fix a large $N\in \N$, and let $P$ be the boundary of a regular $N$-polygon in the lower half-plane which has $[-1,1]$ as one of its edges. Let $\G$ be the Jordan curve obtained from $P$ by replacing  the edge $[-1,1]$ by
$\omega$.}

\emph{It is not hard to see that for sufficiently large $N$,  $\G$ is a $K$-quasicircle for some $K>1$ independent of $N$,  that
$\z_{\G}(x,y) < 1/2$  for all $x,y\in \G$ with $|x-y|\le 1/2$, and that $\z_{\G} =  1/2$.}

\emph{On the other hand, every level curve $\g_{\e_n}$ is a $K_n$-quasicircle which contains two circular arcs, with the same  curvature, meeting at an angle $2\alpha_n$. Since $\alpha_n\to 0$, $K_n$'s cannot have a uniform upper bound. So $\G$ does not satisfy the LQC property.}
\end{rem}

\section{Level Chord-Arc Property}\label{LCAresults}

In this section we give the proof of Theorem \ref{LCAmain}. We start by recalling a known fact: \emph{if a bounded starlike domain in $\R^2$ satisfies a strong interior cone property  then its boundary is a chord-arc curve.}

For $a \in (0,\pi), h>0$, $x\in \R^2$ and $v\in \mathbb S^1$, denote by
\[ \mathscr{C}_{a,h}(x,v) = \{z \in \R^2 \colon \cos(a/2)\, |z-x| \leq v\cdot (z-x) \leq h \}\]
the truncated cone with vertex $x$, direction $v$, height $h$ and aperture $a$.

Suppose that $U \subset \R^2$ is a bounded  \emph{starlike domain} with respect to a point $x_0\in U$, i.e., for every $x \in \partial U$ the line segment $[x_0,x]$ intersects $\partial U$ only at the point $x$.
Suppose in addition
$(U, x_0)$ satisfies the \emph{strong interior cone property}, i.e.,
there exist $a \in (0,\pi), h>0$ so that the truncated cone $\mathscr{C}_{a,h}(x,v_x)\setminus \{x\}$, in the direction $v_x = (x_0-x)/|x_0-x|$, is contained in $U$ for every $x \in \partial U$.
Assume from now on $x_0=0$, and set
\[ \rho = \max\{|x|\colon x \in \partial U\}.
\]
We obtain, by elementary geometry, positive constants $c_1=c_1(a, \frac{h}{\rho}), c_2=c_2(a), c_3=c_3(a)$ such that
\[
c_2\, |x-y| \leq |x- |x|\frac{y}{|y|}|\leq \, c_3 |x-y|, \,\, \text{for all}\,\, x, y \in \partial U \,\, \text{with}\,\, |\frac{x}{|x|}-\frac{y}{|y|}| \le c_1.
\]
Let  $\psi \colon \partial U \to \mathbb S^1$ be the map $ x\mapsto \frac{x}{|x|}$. Then $\rho \psi$ is $L$-bi-Lipschitz for some constant $L>1 $ depending only on $a$ and $h/\rho$. Therefore $\partial U$ is a $C$-chord-arc curve for some constant $C>1 $ depending only on $a$ and $h/\rho$.

\bigskip

Essential to our proof of Theorem \ref{LCAmain} is a lemma of Brown \cite{Brown} on sets of constant distance from a compact subset $A$  of $\R^2$. Recall from the Introduction that for a given $\e>0$, the $\e$\emph{-boundary} of $A$ is the set
\[ \partial_{\e}(A) = \{x \in \R^2 \colon \dist(x,A) = \e \}. \]
In Lemma 1 of his paper, Brown  proved that \emph{if $\e > \diam{A}$, then $\partial_{\e}(A)$ is the boundary of a starlike domain $U_\e$ with respect to any point $x_0 \in A$}. In fact,  whenever  $\e > 3 \diam{A}$, $(U_\e,x_0)$ also possesses
the strong interior cone property, namely,
 the cone $\mathscr{C}_{\frac{\pi}{3},\frac{\e}{3}}(x,(x_0-x)/|x_0-x|)\setminus \{x\}$ with vertex $x\in \partial_{\e}(A)$ is contained in $U_\e$. Since $2\e <\diam (\partial_{\e}(A)) < 3\e $, we have the following.

\begin{lem}\label{Brown-chord arc}
There is a universal constant $c_0>1$ for the following.
Suppose that $A$ is a compact subset of $\R^2$ and that $\e > 3 \diam{A}$. Then the $\e$\emph{-boundary} $\partial_{\e}(A) $ of $A$ is a $c_0$-chord arc curve.
\end{lem}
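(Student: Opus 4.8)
The plan is to combine the two facts the excerpt has just set up: the general statement from the discussion preceding the lemma that $\partial_\e(A)$ is the boundary of a starlike domain $U_\e$ (with respect to any $x_0 \in A$) satisfying the strong interior cone property when $\e > 3\diam A$, together with the elementary ``cone implies chord-arc'' computation carried out above. The only work left is to track that the resulting chord-arc constant is \emph{universal}, i.e.\ independent of $A$ and of $\e$, which is exactly why the hypothesis is stated as $\e > 3\diam A$ rather than merely $\e > \diam A$.

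First I would fix $x_0 \in A$, set $A' = A$ and translate so that $x_0 = 0$, and invoke Brown's Lemma~1 (as recalled in the excerpt) to get that $\partial_\e(A) = \partial U_\e$ with $U_\e$ starlike about $0$. Next I would record the stronger assertion, valid because $\e > 3\diam A$, that $(U_\e, 0)$ satisfies the strong interior cone property with the specific cone $\mathscr{C}_{\pi/3,\,\e/3}(x, v_x)\setminus\{x\} \subset U_\e$ for every $x \in \partial_\e(A)$, where $v_x = -x/|x|$; here the aperture $a = \pi/3$ and the height $h = \e/3$ are the key quantities. Then I would bring in the bound $2\e < \diam(\partial_\e(A)) < 3\e$, which gives $\rho := \max\{|x| : x \in \partial U_\e\} < 3\e$ (indeed $\rho \le \e + \diam A < \tfrac{4}{3}\e$ would even do), hence $h/\rho = (\e/3)/\rho > (\e/3)/(3\e) = 1/9$, a bound independent of $A$ and $\e$.

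Now I would simply quote the elementary-geometry conclusion displayed just before the lemma statement: for a bounded starlike domain satisfying the strong interior cone property with aperture $a$ and height $h$, the radial projection map $\rho\psi \colon \partial U \to S(0,\rho)$, $x \mapsto \rho x/|x|$, is $L$-bi-Lipschitz with $L = L(a, h/\rho)$, and consequently $\partial U$ is a $C$-chord-arc curve with $C = C(a, h/\rho)$. Since in our situation $a = \pi/3$ is fixed and $h/\rho$ is bounded below by the absolute constant $1/9$ (and above, trivially, since $h/\rho = (\e/3)/\rho$ and $\rho > 2\e/3$ give $h/\rho < 1/2$), the constant $C$ depends on nothing at all; call it $c_0$. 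This proves the lemma with a universal $c_0 > 1$.

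I do not expect a genuine obstacle here — the substantive geometry (Brown's starlikeness lemma and the cone-to-chord-arc estimate) has already been done or cited in the excerpt. The only point requiring care is the bookkeeping that makes $c_0$ universal: one must verify both that $a = \pi/3$ is a fixed aperture independent of the data and that $h/\rho$ stays in a fixed compact subinterval of $(0,\infty)$, using the two-sided bound $2\e < \diam(\partial_\e(A)) < 3\e$ (equivalently $\tfrac{2}{3}\e < \rho < \tfrac{4}{3}\e$). If one wanted to be fully self-contained rather than citing the displayed computation, one would also re-derive the constants $c_1(a, h/\rho)$, $c_2(a)$, $c_3(a)$ controlling $|x - |x|\,y/|y||$ versus $|x-y|$ for nearby radial directions on $\partial U_\e$, but that is the routine elementary-geometry estimate already stated above and need not be repeated.
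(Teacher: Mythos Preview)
Your proposal is correct and follows exactly the paper's approach: the paper's ``proof'' is the discussion immediately preceding the lemma, combining Brown's starlikeness result, the cone property with $a=\pi/3$ and $h=\e/3$, and the diameter bounds $2\e<\diam(\partial_\e(A))<3\e$ to make the chord-arc constant $C(a,h/\rho)$ universal. You have simply spelled out the bookkeeping on $h/\rho$ that the paper leaves implicit.
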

\medskip

We now apply Lemma \ref{Brown-chord arc} locally and repeatedly to prove Theorem \ref{LCAmain}.

\begin{proof}[Proof of Theorem \ref{LCAmain}]
We prove the theorem for the case $\e>0$ only. The case $\e<0$ is essentially the same.

For the necessity, we only need to check that $\G$ is a chord-arc curve.  By the LCA property,
there exist $L>1$, $n_0 \in \N$, and for each $n \geq n_0$, an $L$-bi-Lipschitz homeomorphism $f_n$ of $\R^2$
such that $f_n(\overline{\mathbb{B}^2}) = \overline{\D_{\frac{1}{n}}}$.
Since $f_n|\overline{\mathbb{B}^2}$ are equicontinuous, by Arzela-Ascoli, there is a subsequence $f_{k_n}|\overline{\mathbb{B}^2}$ which converges to a homeomorphism $f$.
It is not hard to see that $f$ is bi-Lipschitz and maps $\overline{\mathbb{B}^2}$ onto $\overline\Omega$. Therefore,  $\G=f(\partial {\mathbb{B}^2})$ is a chord-arc curve.

To show the sufficiency, we assume that $\G$ is a $C_1$-chord-arc curve, and that there exist $\e_0 >0$ and $K>1$ such that the Jordan curves $\g_{\e}$ are $K$-quasicircles  for all  $\e\in (0, \e_0]$. In the rest of the proof, constants are understood to depend on  $C_1$ and $K$ only, in particular independent of $\e$.

For $\e\in (0,\e_0]$ and for a closed subset $\l \subset \g_{\e}$, we set
\[\G^{\l} = \{y \in \G \colon |y-x| = \e \,\,\text{for some}\,\, x \in \l \}=\{y \in \G \colon \dist(y,\l) = \e \}. \]
In general, $\G^{\l} $ need not be connected, and there is no relation between the diameter of $\l$ and the diameter of $\G^{\l}$.

We prove now that $\g_{\e}$ is a chord-arc curve.  Since $\g_{\e}$ is a $K$-quasicircle, it suffices to check
\[
\ell (\l) \lesssim  \diam{\l}\quad \text{ for all  subarcs}\,\, \l \subset \g_{\e}.\]
 We consider three cases according to the diameter of $\G^{\l}$.

\emph{Case 1.} $\diam{\G^{\l}} \leq \e/10$.
Set
\[ \partial_{\e}(\G^{\l}) = \{x \in \R^2 \colon \dist(x,\G^{\l}) = \e \}.\]
After a moment of reflection, we see that $\l \subset \partial_{\e}(\G^{\l})$.
By Lemma \ref{Brown-chord arc}, there exists a universal constant $c_0 > 1$ such that, for any $x,y \in \partial_{\e}(\G^{\l})$,\[ \ell (\partial_{\e}(\G^{\l})(x,y)) \leq c_0|x-y|;  \]
recall that $\partial_{\e}(\G^{\l})(x,y)$ is the subarc of $\partial_{\e}(\G^{\l})$ connecting $x$ and $y$ that has the smaller diameter. We deduce from this the following
\[
\ell (\l) \leq c_0\diam{\l}.
\]

\medskip

To prepare for the next two cases, we take $\L$ to be the subarc of $\G$ that contains $\G^{\l}$ having the smallest diameter.
Subdivide $\L$ into subarcs $\L_1,\L_2,\dots,\L_N$ which have mutually disjoint interiors and satisfy the condition
\[
\e/100 \leq \diam{\L_n} < \e/10\,\, \text{ for all}\,\, n=1,\dots,N.
\]
Since $\G$ is a quasicircle, $\diam \L \simeq \diam{\G^{\l}}$; since $\G$ is a $C_1$-chord-arc curve $ N \e \simeq \diam{\L}$. So,
\[
N \simeq \e^{-1}\diam{\L} \simeq \e^{-1}\diam{\G^{\l}}.
\]
Set $\l_n = \g_\e^{\L_n}\cap \l$ for $n=1,\dots,N$. Again, after a moment of reflection, we see that $\l = \bigcup_{n=1}^N \l_n$. Recall, from  Lemma \ref{orientation}, that  $\g_\e^{\L_n}=\{x\in\g_\e \colon \dist(x,\L_n)=\e\}$ are arcs whenever they are nonempty, so $\l_n$ are subarcs of $\g_\e$.  Note however that some of $\{\l_n\}$ may overlap.
We now apply Lemma \ref{Brown-chord arc} to the $\e$-boundary
$\partial_{\e}(\L_n)$ of  $\L_n$. Since $\l_n$ is also a subarc of $\partial_{\e}(\L_n)$,
it follows, as in Case 1,  that
\[
\ell (\l_n) \leq c_0\diam{\l_n}  \lesssim \e.
\]

\emph{Case 2.} $\e/10 < \diam{\G^{\l}} \leq 10\e$. From the estimates above, we obtain
\[
\ell (\l) \leq \sum_{n=1}^N\ell (\l_n) \leq
\sum_{n=1}^Nc_0\diam{\l_n} \leq Nc_0\diam{\l} \simeq \diam{\l}.
\]
Note that in this case, diameter of $\l$ might be much smaller than $\e$.

\emph{Case 3.} $10\e < \diam{\G^{\l}}$. In this case, it is geometrically evident that
\[
 \diam{\G^{\l}} - 2\e \leq   \diam{\l} \leq \diam{\G^{\l}} + 2\e, \]
hence  $\diam{\l}\simeq \diam{\G^{\l}}$.
Therefore,
\[ \ell (\l) \leq \sum_{n=1}^N\ell (\l_n) \leq \sum_{n=1}^N c_0 \diam{\l_n}\lesssim
N \e \simeq \diam{\G^{\l}} \simeq \diam {\l}. \qedhere \]
\end{proof}

\begin{rem}
Suppose that $\G$ is a Jordan curve. The proof of the previous theorem shows that each level set $\g_\e$, with $\e \neq 0$, is contained in a finite union of $c_0$-chord-arc curves, and that if $\g_\e$ is a quasicircle with $\e \neq 0$, then it is a $C(\G, \e)$-chord-arc curve.

\end{rem}

\begin{rem}\label{LCAcor}
Suppose that $\G$ is a Jordan curve which satisfies a local $C$-chord-arc condition with $1< C < \sqrt{2}$. Then, $\G$ has the \emph{LCA} property.
\end{rem}

\begin{proof}
Suppose that for any $x,y \in \G$ with $|x -y|<r_0$ and any $z\in \G(x,y)$, we have
$|z-x| + |z-y| \leq \ell (\G(x,y)) \leq C|x-y|$. Then $\G(x,y)$ is contained in the closed region whose boundary is the ellipse having foci at the points $x, y$ and semi-minor  $\frac{1}{2}\sqrt{C^2-1}|x-y|$.
Since $C < \sqrt{2}$, we have $\z_{\G} \leq \frac{1}{2}\sqrt{C^2-1} <1/2$. By Theorem \ref{betathmLQC}, $\G$ has the LQC property; and by Theorem \ref{LCAmain}, $\G$ has the LCA property.
\end{proof}

\section{Examples from Rohde's Snowflakes}\label{snowflakes}
In \cite{Roh}, Rohde gives an intrinsic characterization of planar quasicircles.
He defines explicitly a family $\mathcal F$ of snowflake-type curves, then proceeds to prove
that every quasicircle in the plane is  bi-Lipschitz homeomorphic to a member of this family.

Each of Rohde's snowflakes  $\mathcal S$ is constructed as follows. Fix a number $p \in [\frac{1}{4},\frac{1}{2})$, and let $\mathcal S_1$ be the unit square. The
polygon $\mathcal S_{n+1}$ is constructed by replacing each of the $4^n$ edges of $\mathcal S_{n}$ by a rescaled and rotated copy of one of the only two polygonal arcs allowed in Figure \ref{fig:figure2}, in such a way that the polygonal regions are expanding. The curve $\mathcal S$ is obtained by taking the limit of $\mathcal S_{n}$, just as in the construction of the usual von Koch snowflake. Clearly every Rohde's snowflake is a quasicircle. The entire collection of Rohde's snowflakes, with all possible $p\in [\frac{1}{4},\frac{1}{2})$, forms the family $\mathcal F$.

\begin{thmnn}[{\cite[Theorem 1.1]{Roh}}]
A bounded Jordan curve $\Gamma$ is a quasicircle if and only if there exist a curve $\mathcal{S}\in \mathcal F$ and a bi-Lipschitz homeomorphism $f$ of $\mathbb{R}^2$ so that $\Gamma = f(\mathcal{S})$.
\end{thmnn}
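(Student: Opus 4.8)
The plan is to treat the two implications separately. The easy direction is that a bi-Lipschitz image of a member of $\mathcal F$ is a quasicircle. Every $\mathcal S\in\mathcal F$ is itself a quasicircle: at each stage $\mathcal S_{n+1}$ is obtained from $\mathcal S_n$ by replacing each edge with a rescaled and rotated copy of one of two fixed polygonal arcs whose shape depends only on $p<1/2$, and the regions are expanding, so one verifies directly that $\mathcal S$ satisfies the Ahlfors $2$-point condition \eqref{3pts} with a constant depending only on $p$ (equivalently, $\mathcal S$ is the attractor of an iterated function system with the open set condition and uniformly bi-Lipschitz generators). Since an $L$-bi-Lipschitz homeomorphism of $\R^2$ is $K(L)$-quasiconformal and quasicircles are preserved under quasiconformal homeomorphisms of $\R^2$, the curve $\Gamma=f(\mathcal S)$ is a quasicircle, with $2$-point constant depending only on $L$ and $p$.

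For the hard direction, let $\Gamma$ be a $K$-quasicircle; by Ahlfors' theorem $\Gamma$ satisfies \eqref{3pts} for some $C=C(K)$. First I would fix the parameter $p=p(K)\in[\tfrac14,\tfrac12)$, chosen close enough to $\tfrac14$ that the ``bent'' polygonal replacement arc carries enough amplitude to absorb the oscillation of $\Gamma$, which is controlled at every scale by $C$. The core of the argument is an adaptive $4$-adic decomposition of $\Gamma$: for each $n$ one builds a partition of $\Gamma$ into $4^n$ consecutive subarcs $\{\Gamma_w: w\in\{1,2,3,4\}^n\}$ refining the previous one, carried out by a stopping-time/greedy splitting and arranged so that (i) $\diam\Gamma_w\simeq 4^{-n}\diam\Gamma$ with constants independent of $n$ and $w$; (ii) consecutive arcs have comparable diameters; (iii) the four children of $\Gamma_w$ sit in the four geometric ``slots'' of a snowflake replacement cell. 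Such a balanced decomposition is possible because, by \eqref{3pts} and an intermediate-value argument, any subarc of $\Gamma$ can be split into consecutive pieces of comparable diameter. At each step one inspects $\Gamma_w$: if it is already close to its chord (deviation below a threshold of the $\zeta_\Gamma$-flatness type), assign the ``straight'' polygonal arc, otherwise the ``bent'' one — both are legal moves in building $\mathcal S_{n+1}$ from $\mathcal S_n$. Passing to the limit yields a curve $\mathcal S=\mathcal S(\Gamma)\in\mathcal F$ and a homeomorphism $g\colon\mathcal S\to\Gamma$ respecting the $4$-adic structure.

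The heart of the proof, and the step I expect to be the main obstacle, is showing that $g$ is bi-Lipschitz with constants depending only on $K$. When $x,y$ lie in a common generation-$n$ cell but not in a common generation-$(n+1)$ cell, the estimate $|x-y|\simeq 4^{-n}\simeq|g(x)-g(y)|$ follows from the balance properties (i)--(ii); the delicate cases are points near the branch points of the snowflake construction and points whose generations are incomparable, where one must chain the bound through intermediate scales and control the accumulated multiplicative error. This is precisely where the expansion property of the $\mathcal S_n$ and the quasicircle $2$-point condition for $\Gamma$ are used together, and where the choice of $p=p(K)$ enters. Once $g$ is known to be bi-Lipschitz, I would extend it to a bi-Lipschitz homeomorphism $f$ of $\R^2$ via a bi-Lipschitz extension theorem for maps between Jordan curves in the plane (\cite[p.~23]{Tukia-ext}, \cite[Proposition~1.13]{JeK}), the extension norm depending only on the bi-Lipschitz norm of $g$; this gives $\Gamma=f(\mathcal S)$ as required. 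The remaining bookkeeping — that the greedy subdivision is compatible with the rigid slot structure and that the flatness threshold can be taken uniform in $K$ — is routine once the balance estimates are established.
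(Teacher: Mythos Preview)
The paper does not prove this theorem: it is quoted verbatim from Rohde \cite{Roh} as a known result, with no argument supplied. Section~\ref{snowflakes} merely describes the family $\mathcal F$, states the theorem, and then remarks that ``Rohde's argument'' adapts when the initial square is replaced by a regular $N$-gon. So there is no proof in the paper to compare your proposal against.

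That said, your outline does follow the shape of Rohde's actual construction --- an adaptive $4$-adic subdivision of $\Gamma$ driven by a flatness threshold, with the ``straight'' or ``bent'' replacement chosen accordingly --- and you correctly identify the bi-Lipschitz estimate for the induced map $g$ as the crux. One concrete gap, however, is the extension step. You invoke \cite[p.~23]{Tukia-ext} and \cite[Proposition~1.13]{JeK} to extend $g\colon\mathcal S\to\Gamma$ to a bi-Lipschitz homeomorphism of $\R^2$, but those results concern chord-arc curves (equivalently, bi-Lipschitz images of $\mathbb S^1$); a general quasicircle $\Gamma$ need not be rectifiable, and neither is a typical $\mathcal S\in\mathcal F$, so those citations do not apply. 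Rohde avoids this issue by building the planar bi-Lipschitz map directly, scale by scale, on the complementary regions of the polygonal approximations $\mathcal S_n$, rather than by first defining $g$ on the limit curve and then appealing to an extension theorem. If you want to keep your two-step strategy, you would need a bi-Lipschitz extension theorem valid for maps between quasicircles that are not chord-arc; this is more delicate and not what the cited references provide.
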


\noindent
\begin{figure}[ht]
\includegraphics[scale=1.9]{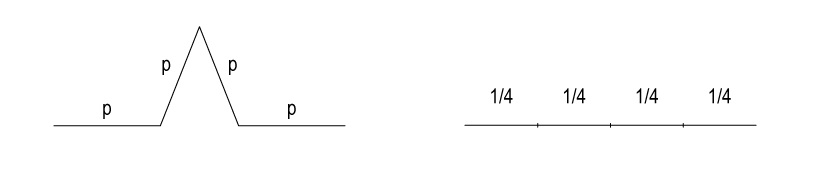}
\caption{}
\label{fig:figure1}
\end{figure}

Fix now a natural number $N\ge 4$. Suppose that a regular $N$-gon, of unit side length, is used in place of the unit square in the first step of Rohde's construction, while the remaining steps are unchanged.  So each snowflake-type curve is  the limit of a sequence of polygons, having  $N 4^{n-1}$ edges at the $n$-th stage. Let $\mathcal F_N$ be the family of these snowflakes. Then Rohde's argument shows that every quasicircle in $\R^2$ is the image of a curve in $\mathcal F_N$ under a bi-Lipschitz homeomorphism of $\R^2$.

Let $\mathcal F_{N,p}$ be the subfamily of curves in  $\mathcal F_N$ constructed using only the polygonal arcs indexed by $(1/4,1/4,1/4,1/4)$ and $(p,p,p,p)$.
It is not hard to see that there exist $N_0>4$ and $p_0 \in (\frac{1}{4},\frac{1}{2})$ for the following. Given $N\ge N_0$ and $1/4 \leq p\leq p_0$, there exists $0<\z_{N,p} <1/2$ and $r_{N,p}>0$ such that every curve $\mathcal S \in \mathcal F_{N,p} $ has the $(\z_{N,p},r_{N,p})$-chordal property, and therefore satisfies the LQC property.

\bibliographystyle{abbrv}
\bibliography{reference-levelsets}

\end{document}